\numberwithin{equation}{section}
\newtheorem{theorem}{Theorem}[section]
\newtheorem{proposition}[theorem]{Proposition}
\newtheorem{lemma}[theorem]{Lemma}
\newtheorem{corollary}[theorem]{Corollary}
\newtheorem*{theorem*}{Theorem}
\theoremstyle{definition}
\newtheorem{definition}[theorem]{Definition}
\newtheorem{example}[theorem]{Example}
\theoremstyle{remark}
\newtheorem{remark}[theorem]{Remark}
\theoremstyle{definition}
\newtheorem*{definition*}{Definition}
\newtheorem*{corollary*}{Corollary}
\definecolor{darkred}{rgb}{0,0,0} 
\definecolor{darkgreen}{rgb}{0,0,0}
\definecolor{darkblue}{rgb}{0,0,0}
\begin{document}

\title{Meridional rank of whitehead doubles} 
 
\author{Ederson R. F. Dutra} \thanks{Research supported by FAPESP, São Paulo Research Foundation, grants  2018/08187-6 and 2021/12276-7.}

\address{Universidade Federal de São Carlos,  São Carlos, Brazil}
 
\email{edersondutra@dm.ufscar.br}

\maketitle

 
\begin{abstract} 
We prove that the meridional rank and the bridge number of the Whitehead double of a prime algebraically tame knot  coincide.  Algebraically tame knots are a broad generalization of torus knots  and iterated cable knots.   
\end{abstract}


\section{Introduction}

Let $\mathfrak k\subseteq \mathrm S^3$ be a knot. The \emph{bridge number of $\mathfrak k$}, denoted by $b(\mathfrak k)$, is defined as the minimal number of bridges in a bridge presentation of $\mathfrak k$.  A    \emph{meridian} of $\mathfrak k$ is an element of $\pi_1(\mathrm S^3\setminus \mathfrak k)$ which is represented by a curve freely homotopic to the boundary path of a disk in $\mathrm S^3$ that intersects $\mathfrak k$ in exactly one point.  The \emph{meridional rank  of  $\mathfrak k$}, denoted by  $w(\mathfrak k)$,  is defined as the minimal number of meridians needed to generate $\pi_1(\mathrm S^3\setminus\mathfrak k)$. It is known that any bridge presentation of $\mathfrak k$ containing $b$ bridges  yields a generating set of $\pi_1(\mathrm S^3\setminus\mathfrak k)$ consisting of $b$ meridians.   Therefore we always have   the inequality   $w(\mathfrak k) \le  b(\mathfrak k)$.

It was asked by S. Cappell and J. Shaneson~\cite[Problem~1.11]{Kirby}  as well as by K. Murasugi  whether  the equality $b(\mathfrak k)=w(\mathfrak k)$  always holds. To this day no counterexamples are known but the equality has been verified  for many classes of knots (using a variety of techniques): knots  whose group is generated by two meridians~\cite{Boi6},   (generalized) Montesinos links~\cite{Boi3, LM},  torus knots~\cite{RostZ}, iterated cable knots~\cite{Co, CH}, knots whose exterior is a graph manifold~\cite{BDJW}, links of meridional rank $3$ whose double branched covers are graph manifolds~\cite{BJW}, and recently for twisted and  arborescent links~\cite{Baader, Misev}.

An interesting result proved recently by R. Blair, A. Kjuchukova, R. Velazquez, and P. Villanueva~\cite{Blair} states that the bridge number of any knot equals its Wirtinger number, an invariant closely related to the meridional rank. This result therefore establishes a weak version of Cappell and Shaneson’s question, and also points to new approaches  to this problem.

In this article we show that the conjuncture holds for Whitehead doubles of prime algebraically tame knots and we also show that the class of   prime algebraically tame knots is closed under braid satellites. 
\begin{theorem}{\label{thm01}}
Let $\mathfrak k$ be a Whitehead double of a prime algebraically tame knot. Then  it holds $w(\mathfrak k)=b(\mathfrak k)$. 
\end{theorem}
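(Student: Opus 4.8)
The plan is to establish the nontrivial inequality $w(\mathfrak k)\ge b(\mathfrak k)$; the opposite one holds for every knot. Write $\mathfrak j$ for the prime algebraically tame knot of which $\mathfrak k$ is a Whitehead double. The exterior of $\mathfrak k$ splits along the satellite torus $T$ as $E(\mathfrak k)=E(\mathfrak j)\cup_T M$, where $M$ is the exterior of the Whitehead link --- a hyperbolic manifold with two cusps: a \emph{pattern} cusp, whose meridian is a meridian of $\mathfrak k$, and a \emph{companion} cusp, glued to $\partial E(\mathfrak j)$. On fundamental groups this reads $\pi_1(E(\mathfrak k))=\pi_1(E(\mathfrak j))\ast_{\mathbb Z^2}\pi_1(M)$, the edge group $\mathbb Z^2$ being $\pi_1(T)$ with its two distinguished slopes, the meridian and the longitude of $\mathfrak j$. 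The Whitehead pattern has wrapping number $2$ and winding number $0$; as $\mathfrak j$ is nontrivial and prime, Schubert's inequality for the bridge number of satellite knots (in the rigorous form due to Schultens) gives $b(\mathfrak k)\ge 2\,b(\mathfrak j)$, while doubling a minimal bridge presentation of $\mathfrak j$ and realizing the clasp without introducing new bridges gives the reverse bound; hence $b(\mathfrak k)=2\,b(\mathfrak j)$. It therefore suffices to prove $w(\mathfrak k)\ge 2\,b(\mathfrak j)$.

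For the lower bound I would argue on the Bass--Serre tree. Because $\mathfrak k$ lies inside the companion solid torus, every meridian of $\mathfrak k$ is conjugate into $\pi_1(M)$; thus a meridional generating set $X$ of $G:=\pi_1(E(\mathfrak k))$ with $|X|=w(\mathfrak k)$ consists of conjugates of meridians of the Whitehead pattern. Let $\mathcal T$ be the Bass--Serre tree of the graph-of-groups decomposition of $G$ coming from the full JSJ decomposition of $E(\mathfrak k)$ --- that is, the satellite torus $T$ together with the JSJ tori of $E(\mathfrak j)$. Using the Nielsen method for groups acting on trees, as for graph-manifold knots in \cite{BDJW,BJW} and for iterated cable knots in \cite{Co,CH}, one carries $X$ by Nielsen transformations to a tuple $Y$ of the same length that is reduced with respect to $\mathcal T$; from such a tuple one reads off, at every vertex of the JSJ graph, a subfamily of $Y$ which together with the incident edge groups generates the corresponding vertex group, the sizes of these subfamilies summing to at most $|Y|=w(\mathfrak k)$. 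The essential point is the vertex carrying $\pi_1(M)$: since the Whitehead pattern meets a meridian disc of the companion solid torus in two points, an analysis of the reduced form at this vertex shows that $Y$ must contain two disjoint subfamilies, each of which, together with the peripheral $\mathbb Z^2=\pi_1(T)$, generates $\pi_1(E(\mathfrak j))$. Writing $\hat w(\mathfrak j)$ for the least number of meridians of $\mathfrak j$ that generate $\pi_1(E(\mathfrak j))$ relative to (i.e.\ together with) the peripheral subgroup, this yields $w(\mathfrak k)\ge 2\,\hat w(\mathfrak j)$.

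It remains to see that $\hat w(\mathfrak j)=b(\mathfrak j)$ for a prime algebraically tame knot. This is precisely where the hypothesis is used: from the structure theory of algebraically tame knots one shows, by an induction over the Seifert fibered and hyperbolic pieces of the JSJ decomposition of $E(\mathfrak j)$, that even when the peripheral subgroup is allowed for free one cannot generate $\pi_1(E(\mathfrak j))$ with fewer than $b(\mathfrak j)$ meridians, primality of $\mathfrak j$ being what rules out the degenerate possibility of trading a meridian for a peripheral element. Granting this,
\[
w(\mathfrak k)\ \ge\ 2\,\hat w(\mathfrak j)\ =\ 2\,b(\mathfrak j)\ =\ b(\mathfrak k)\ \ge\ w(\mathfrak k),
\]
so all of these are equalities and $w(\mathfrak k)=b(\mathfrak k)$.

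The main obstacle is the Bass--Serre step, and within it the clean extraction of the factor $2$: one must show that the two strands of the Whitehead pattern each genuinely demand a full relative meridional generating family of the companion and that these families can be neither merged nor shortened --- equivalently, that inside $\pi_1(M)$ the core of the companion solid torus is not expressible with too few meridians of the pattern --- while checking that the hyperbolic piece $M$ forces no extra meridian beyond those, keeping track of the two slopes carried by the $\mathbb Z^2$ edge group, and ensuring that the reduced tuple is compatible with \emph{all} the JSJ tori of $E(\mathfrak j)$ at once. Making the Nielsen reduction interact correctly with this whole decomposition, uniformly over prime algebraically tame companions, is exactly what the notion of algebraic tameness is designed to make possible.
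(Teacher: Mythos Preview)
There is a fatal gap in the final step. Your relative meridional rank $\hat w(\mathfrak j)$ --- the least number of meridians that together with $P(\mathfrak j)$ generate $G(\mathfrak j)$ --- satisfies $\hat w(\mathfrak j)\le w(\mathfrak j)-1$ for \emph{every} nontrivial knot, simply because $P(\mathfrak j)$ already contains a meridian: after a global conjugation any minimal meridional generating set of $G(\mathfrak j)$ contains $m\in P(\mathfrak j)$, which may then be discarded. For a prime algebraically tame $\mathfrak j$ one has $w(\mathfrak j)=b(\mathfrak j)$, so $\hat w(\mathfrak j)\le b(\mathfrak j)-1$ and your displayed chain yields at best $w(\mathfrak k)\ge 2b(\mathfrak j)-2=b(\mathfrak k)-2$. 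Algebraic tameness constrains subgroups generated by meridians \emph{alone}; once the longitude is thrown in for free the equality $\hat w=b$ you assert is false already for the trefoil.

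The preceding step, extracting two disjoint subfamilies from the wrapping number $2$, is also unjustified: in the splitting $G(\mathfrak j)\ast_{P(\mathfrak j)}\pi_1(M)$ there is a single edge, and Nielsen reduction produces one subfamily at the companion vertex, not two. The paper's argument is organised differently and avoids both issues. It works only with that single satellite torus and invests the effort in the pattern space $E$: Proposition~\ref{prop:01} shows that every meridional subgroup of $\pi_1(E)$ lies in a \emph{good} one, and by Definition~\ref{def:good} a good subgroup meets each conjugate of $C_V=\langle m_V,l_V\rangle$ in at most $\langle m_V\rangle$, with any such conjugate of $m_V$ sitting in a free factor of meridional rank $\ge 2$. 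Hence in the folding process only the companion meridian $m_1$, never the longitude, is pushed across the edge; the vertex groups on the $G(\mathfrak j)$ side are therefore honest meridional subgroups, algebraic tameness applies to them directly (no relative version is needed), and each of their generators is paid for by at least two pattern meridians. That is the actual source of the factor $2$.
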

 
\begin{theorem}{\label{thm02}}
The class of prime  algebraically tame knots is closed under taking satellites with  braid pattern.
\end{theorem}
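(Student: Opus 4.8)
The plan is to realise the braid satellite as a graph of groups over the companion torus and to verify that each defining condition of algebraic tameness is inherited. Let $C$ be a prime algebraically tame knot, let $P\subseteq V=D^2\times S^1$ be a braid pattern, and set $\mathfrak k=P(C)$. If $P$ is trivial then $\mathfrak k=C$ and there is nothing to prove, so assume $P$ is the closure $\widehat\beta$ of an $n$-braid $\beta\in B_n$, $n\ge 2$, whose underlying permutation is an $n$-cycle (so that $\mathfrak k$ is a knot). Writing $E_C=S^3\setminus\mathcal{N}(C)$ and $W=V\setminus\mathcal{N}(P)$, the exterior of $\mathfrak k$ is $X=E_C\cup_T W$, glued along the companion torus $T=\partial\mathcal{N}(C)=\partial V$. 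Two standard facts will be used throughout: since the winding number is $n\ge 2$, the torus $T$ is incompressible in $W$ (and in $E_C$, as $C$ is nontrivial) and not boundary-parallel in $X$; and $W$ fibres over $S^1$ with fibre the $n$-punctured disk and monodromy $\beta$, so that $\pi_1(W)\cong F_n\rtimes_{\beta_\ast}\mathbb{Z}$, where $F_n=\langle x_1,\dots,x_n\rangle$ is generated by meridians of $P$, the stable letter $t$ represents the core direction of $V$, and the boundary word $\delta=x_1\cdots x_n$ commutes with $t$.

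By van Kampen's theorem, $\pi_1(X)=\pi_1(E_C)\ast_{\pi_1(T)}\pi_1(W)$, an amalgamated product over $\pi_1(T)\cong\mathbb{Z}^2$ (incompressibility of $T$ guaranteeing that $\pi_1(T)$ embeds in both factors). The first thing I would pin down is the peripheral data across $T$: the meridian $\mu_C$ of the companion corresponds to the boundary word $\delta=x_1\cdots x_n\in\pi_1(W)$, while the preferred longitude $\lambda_C$ (up to the framing ambiguity $\lambda_C\mapsto\lambda_C\mu_C^{\,k}$) corresponds to the stable letter $t$; dually, a meridian $\mathfrak m$ of $\mathfrak k$ is conjugate in $\pi_1(W)$ to one of the $x_i$. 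In particular the meridians of $\mathfrak k$ lying in $W$ generate exactly the fibre group $F_n$, and $F_n$ already contains the companion meridian $\mu_C=\delta$.

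Next I would settle primeness. A decomposing annulus for $\mathfrak k$ can, by irreducibility of the two pieces together with innermost-disk and incompressibility arguments, be isotoped either off $T$ or so that it meets $T$ only in curves essential in $T$. In the first case it is a decomposing sphere for the pattern, which is impossible because braid closures are prime patterns (they have no local knot, being transverse to the meridian disks, and carry no composite-pattern sphere); in the second case one extracts a nontrivial connected summand of $C$, contradicting its primeness. Hence $\mathfrak k$ is prime.

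The substantive step is to check that the enlarged graph of groups $\pi_1(X)=\pi_1(E_C)\ast_{\mathbb{Z}^2}\pi_1(W)$ satisfies the conditions that define algebraic tameness. Geometrically, the JSJ decomposition of $X$ is assembled from that of $E_C$, the torus $T$, and the JSJ decomposition of $W$; and the JSJ pieces of $W$ are each either Seifert fibred or hyperbolic, being the pieces of a punctured-disk bundle over $S^1$ (according as the monodromy $\beta$ is periodic, reducible, or pseudo-Anosov in the Nielsen--Thurston classification). Each such piece satisfies the conditions imposed on an individual piece, a meridian of $\mathfrak k$ sitting inside it as a fibre loop $x_i$, while the pieces inherited from $E_C$ satisfy them by the hypothesis on $C$. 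What remains — and what I expect to be the genuine obstacle — is the compatibility condition at the companion torus $T$: algebraic tameness of $C$ is formulated through meridians of $C$, whereas the meridians of $\mathfrak k$ are the loops $x_i$, which are different group elements, so one must show that the meridional data required by the definition is transported correctly across $T$. This is exactly the place where the braid hypothesis — rather than an arbitrary satellite pattern — is essential: the $n$ strands of $\widehat\beta$ furnish $n$ meridians of $\mathfrak k$ whose ordered product is the companion meridian $\mu_C=x_1\cdots x_n$, and whose conjugates under $\pi_1(E_C)$, since $\pi_1(E_C)$ is itself generated by meridians of $C$, rebuild everything the companion side contributes. Carrying this bookkeeping through the definition — tracking a meridional generating system across the amalgam and confirming that it retains the prescribed shape — is the one genuinely technical point; the rest is an assembly of standard facts from $3$-manifold topology and Bass--Serre theory.
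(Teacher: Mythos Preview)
Your setup is right — the amalgam over the companion torus, the description of $\pi_1(W)\cong F_n\rtimes\mathbb Z$, the identification of the companion meridian with $\delta=x_1\cdots x_n$ — and your primeness paragraph is fine (the paper in fact takes primeness of the satellite for granted). But the heart of the argument is missing, and the paragraph invoking JSJ pieces suggests a misunderstanding of what has to be checked.

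Algebraic tameness is \emph{not} a condition on the JSJ pieces of $X$; it is a condition on every meridional subgroup $U\le G(\mathfrak k)$ with $w(U)<b(\mathfrak k)$. You must take such a $U$, exhibit a minimal meridional generating set whose elements are pairwise non-conjugate in $U$, and control $U\cap gP(\mathfrak k)g^{-1}$ for every $g$. Your proposal never fixes such a $U$ and never addresses either condition; the sentence ``each such piece satisfies the conditions imposed on an individual piece'' is not something the definition asks for. The observation that $x_1\cdots x_n=\mu_C$ is correct but does not by itself transport tameness across $T$: a priori a meridional subgroup $U$ of $G(\mathfrak k)$ could intersect many conjugates of both factors in complicated ways, and the question is precisely how those intersections are constrained.

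What the paper actually does is exactly the ``bookkeeping'' you defer. Given $U$ with $w(U)<b(\mathfrak k)=n\,b(\mathfrak k_1)$, it represents $U$ by an $\mathbb A$-graph (a Bass--Serre-style object recording how $U$ sits over the amalgam), introduces a complexity, and shows that a minimal-complexity representative is \emph{folded}. Two ingredients make this work and are absent from your sketch: (i) a structural lemma for meridional subgroups of the braid space $\pi_1(W)$ (the paper's Lemma~\ref{C1}), which says that such a subgroup is either all of $F_n$ (forcing rank $\ge n$) or is free on meridians and meets each conjugate of $\pi_1(\partial V)$ trivially and each conjugate of $P(\mathfrak k)$ in at most a meridian; and (ii) the algebraic tameness of the companion, which plays the analogous role on the $E_C$ side. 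Point (i) is exactly what prevents a small meridional subgroup from ``leaking'' across $T$, and it is where the braid hypothesis is genuinely used. Once the $\mathbb A$-graph is folded, the two tameness conditions for $U$ are read off from Lemma~\ref{C1} vertex by vertex. Without an analogue of Lemma~\ref{C1} and a normal-form/folding argument to organise $U$ over the amalgam, the proof cannot be completed; this is the missing idea, not merely missing detail.
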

  
\subsection*{Acknowledgments}
I would like to thank  the faculty of mathematics of the University of Regensburg  for the hospitality during my one year visit as a pos-doctoral guest in  Regensburg, in special to prof. Stefan Friedl and Filip Misev  for helpful conversations during the preparation of this article and for  
Claudius Zibrowius for a nice office atmosphere.


\section{Basic definitions}{\label{section:basicdef}}

In this section we  fix the notation needed to prove    Theorem~\ref{thm01} and Theorem~\ref{thm02}. We closely follow the notation from~\cite{BZ}.  Let $\mathfrak k \subseteq \mathrm S^3$  be a knot.  The knot exterior  of $\mathfrak k$ is defined as  $$E(\mathfrak k):= \mathrm S^3 \setminus  Int \  V (\mathfrak k)$$  where $V(\mathfrak k)$ is a regular neighborhood of $\mathfrak k$. Let $x_0\in \partial E(\mathfrak k)=\partial V(\mathfrak k)$. The group of $\mathfrak k$ is defined as $G(\mathfrak k):=\pi_1(E(\mathfrak k), x_0)$ and the peripheral  subgroup of $\mathfrak k$ is defined as 
 $P(\mathfrak k)=\pi_1(\partial E(\mathfrak k), x_0).$

A meridian of $\mathfrak k$  was  previously     defined   as an element of $G(\mathfrak k)$ represented by a curve freely homotopic to the boundary path of a disk which intersects $\mathfrak k$ in exactly one point. For our purposes, however,   we need a fixed meridian.

Let  $\mu\subset \partial V(\mathfrak k)$ be the boundary of  a meridional disk of  $ V(\mathfrak k)$  such  that $x_0\in \mu$.  Define $m:=[\mu] \in P(\mathfrak k)$. Thus $m$  is a meridian of $\mathfrak k$  as defined previously  and  any other meridian of $\mathfrak k$ is conjugate to $m$.

Let  also $\lambda \subseteq \partial E(\mathfrak k)$ be a longitudinal curve of $\mathfrak k$, i.e.~a simple closed curve that bounds an orientable surface in $E(\mathfrak k)$,  such that  $x_0 =\lambda\cap \mu$. Define $l:=[\lambda]\in P(\mathfrak k)$. 

Observe that $\{m, l\}$ is  a basis for $P(\mathfrak k)$.


\subsection{Satellite knots}{\label{subsect:satellite}} We recall the construction of satellite knots. Let $\mathfrak k_1\subseteq \mathrm S^3$ be a non-trivial knot.  Let $V\subseteq \mathrm S^3$ be a standardly embedded  unknotted  solid torus  and let  $\mathfrak k_0$ a knot contained in  the interior of $V$ such that $\mathfrak k_0$ intersects each meridional disk of $V$ in  at least one point. For any  homeomorphism  $h$ from $V$ onto a regular neighborhood $ V(\mathfrak k_1)$  of $\mathfrak k_1$  we call   the knot  ${\mathfrak k}_h:=h(\mathfrak k_0)$ a \emph{satellite  knot  with companion $\mathfrak k_1$ and  pattern $(V, \mathfrak k_0)$}, see Figure~\ref{figwh}.
\begin{figure}[h!] 
\begin{center}
\includegraphics[scale=1]{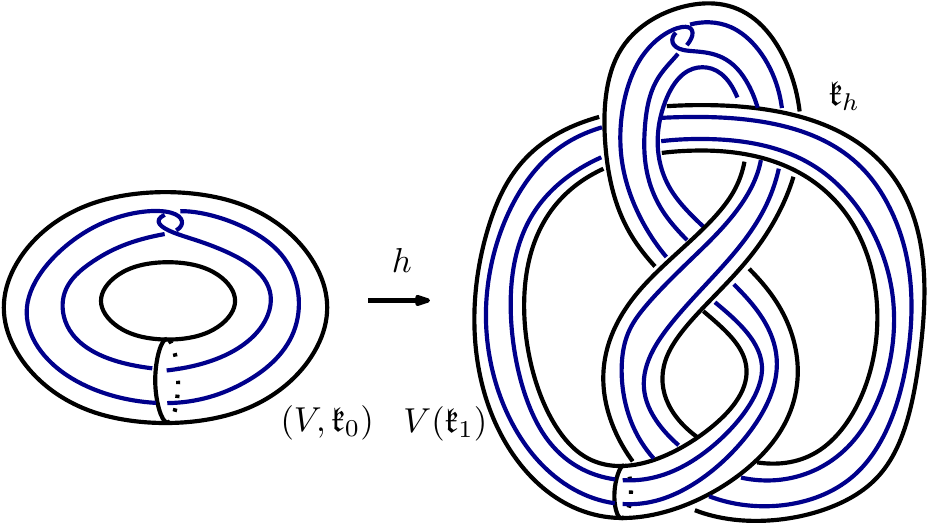}
\caption{Whitehead double of the figure-eight knot.}{\label{figwh}}
\end{center}
\end{figure}
We will be specially concerned with the following classes of satellite knots:
\begin{enumerate}
\item[•] If $(V, \mathfrak k_0)$ is the pattern described in Figure~\ref{figwh},   then  $\mathfrak k_h$  is called  a \emph{Whitehead double} of $\mathfrak k_1$. 

\item[•] If  $\mathfrak k_0$ lies in  a torus $T\subseteq Int(V)$ which is  parallel to $\partial V$,  then $\mathfrak k_h$ is called a \emph{cable knot on $\mathfrak k_1$}. 	

\item[•] If $\mathfrak k_0$ is a  closed braid  standardly   embedded in   $V$  as shown in Figure~\ref{fig:braid}, i.e. any meridional disk $p\times D^2\subseteq  V$ intersects $\mathfrak k_0$ in exactly $n$ points, then $\mathfrak k_h$ is called a  \emph{satellite  knot  with braid pattern}.  
\end{enumerate} 
\begin{figure}[h] 
\begin{center}
\includegraphics[scale=1]{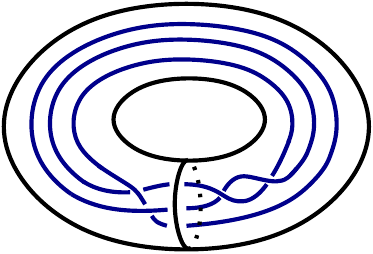}
\end{center}
\caption{$\mathfrak k_0$  is the closed braid $\hat{\beta}$ where  $\beta=\sigma_2{\sigma_1}^{-1}\sigma_2^2$.}\label{fig:braid}
\end{figure}

The exterior of a satellite knot $\mathfrak k=h(\mathfrak k_0)$ clearly decomposes as
$$E(\mathfrak k) =   h(V\setminus Int \ V(\mathfrak{k}_0) ) \cup E(\mathfrak k_1)  \ \text{ and } \  h(V\setminus Int \ V(\mathfrak{k}_0)) \cap E(\mathfrak k_1)=h(\partial V) = \partial E(\mathfrak k_1)$$  
where $V(\mathfrak{k}_0)\subseteq Int(V)$ is a regular neighborhood of $\mathfrak k_0$ in $V$.  The Theorem of Seifert and van-Kampen  implies that  
$$G(\mathfrak k)= \pi_1(V \setminus Int \ V(\mathfrak k_0)) \ast_{(\alpha, C, \omega)} G(\mathfrak k_1)$$
where
\begin{enumerate}
\item[•] $C :=\langle m_e, l_e  \rangle\cong \mathbb Z\oplus \mathbb Z$.

\item[•] $\alpha:C\rightarrow \pi_1(V\setminus Int \ V(\mathfrak k_0))$ is given by 
$$\alpha(m_e)=m_V \ \text{ and } \ \omega(l_e)=l_V$$ where $m_V$  is a meridian of $V$ and $l_V$ is a  meridian of the solid torus $\mathrm S^3\setminus Int \ V$ (and therefore a longitude of $V$). 

\item[•] $\omega:C \rightarrow G(\mathfrak k_1)$ is defined by 
$$\omega(m_e)=h_{\ast}(m_V) \ \text{ and } \  \omega(l_e)=h_{\ast}(l_V)$$ 
where $h_{\ast}$ is the induced isomorphism $\pi_1(\partial V)\to  P(\mathfrak k_1) $. 

\end{enumerate}  
Observe that we can always take $h_{\ast}(m_V)$ as the fixed meridian of $\mathfrak k_1$  and therefore denote $h_{\ast}(m_V)$  simply by $m_1$.

The main tool that we are going to use in this paper  is the theory of  folding in graph of groups developed in \cite{RW}  which is a slightly variation  of the theory developed in   \cite{KMW}. For this reason,    we will always   consider the amalgamated free product  $$G(\mathfrak k)= \pi_1(V \setminus Int \ V(\mathfrak k_0)) \ast_{(\alpha, C, \omega)} G(\mathfrak k_1)$$ as a graph of groups $\mathbb A$ having  a pair of vertices $v_0$ and $v_1$ such that $$A_{v_0}=\pi_1(V\setminus Int \ V(\mathfrak k_0)) \  \ \text{ and } \  \ A_{v_1}=G(\mathfrak k_1)$$  and a single edge pair $\{e, e^{-1}\}$ with $\alpha(e)=v_0$ and $\omega(e)=v_1$   such that $$A_{e}=A_{e^{-1}}:=C=\langle m_e, l_e\rangle.$$ 
The boundary monorphisms $\alpha_e:A_e\to A_{v_0}$ and $\omega_e:A_e\to A_{v_1}$ coincide with  the monorphisms defined before, that is,    $\alpha_e:=\alpha$ and $\omega_e:=\omega$, see Figure~\ref{fig:graphofgroupssatellite}.  
\begin{figure}[h!] 
\begin{center}
\includegraphics[scale=1]{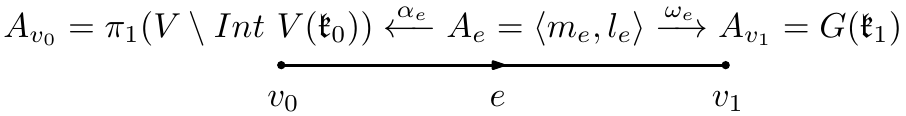}
\caption{The graph of groups $\mathbb A$.}{\label{fig:graphofgroupssatellite}}
\end{center}
\end{figure}

 
\subsection{The group of the braid space and cable space}{\label{sub:sattelite}} Assume that  $\mathfrak k_0 \subset V$ is a closed braid, i.e.~$\mathfrak k_0=\hat{\beta}$  where $\beta$ is an $n$-braid whose  associate permutation $\tau\in S_n$ is  cycle of length $n$. The \textit{braid space of $\beta$} is defined as 
$$BS(\beta):=V\setminus Int \ V(\hat{\beta})$$ 
where $V(\hat{\beta})$ is a regular neighborhood of $\mathfrak k_0=\hat{\beta}$. We can construct $BS(\beta)$ as a mapping torus in the following way. Let $D^2=\{z\in \mathbb C \ | \ \Vert z\Vert \leq 1\}$ and  $$\Sigma=D^2\setminus Int( \delta_1\cup  \ldots \cup  \delta_n)$$  where $\delta_1$ is a small disk in the interior of $D^2$,   $\delta_i=\rho_0^{i-1}(\delta_1)$ for $2\le i\le n$  and where  $\rho_0:D^2\to D^2$ is a rotation  by an angle of $2\pi / n$ about the origin $x_0:=0$.  It is known that   there is a homeomorphism  $\rho : (\Sigma, x_0)\to (\Sigma, x_0)$ such that $\rho(\delta_i)= \delta_{\tau(i)}$  for all $1\le i\le n$   and 
 $$BS(\beta) \cong \Sigma \times [0,1] / (x, 0)\sim (\rho(x), 1)$$  
In the case of a cable pattern  the homeomorphism $\rho$ is a rotation about $x_0$  through an angle of $2\pi (m/n)$  for some integer $m$ such that $gdc(m,n)=1$. In this case we denote $BS(\beta)$ by $CS(n,m)$.

Denote the free generators of $\pi_1(\Sigma, x_0)$ corresponding to  the boundary  paths of the removed disks $ \delta_1, \ldots , \delta_n$  by $x_1,\ldots,x_n$ respectively.  Let $t \in  \pi_1  (BS(\beta), x_0)$ be the element represented by  the loop  $x_0\times I\subset BS(\beta)$.  Therefore,   
$$\pi_1(BS(\beta), x_0)=\pi_1(\Sigma, x_0)\rtimes \mathbb{Z}$$ 
where the  action of $\mathbb{Z}=\langle t \rangle$ on $\pi_1(\Sigma, x_0)$ is given by  
$$tx_it^{-1}=a_ix_{\tau(i)}a_i^{-1} \ \ \text{ for } \  \ 1\leq i\leq n$$ 
for some words $a_1,\ldots, a_{n}\in \pi_1(\Sigma, x_0)$. Observe that    in the case  of a cable space $a_i=1$ for all $1\le i\le n$. Any element of $\pi_1(BS(\beta), x_0)$ is therefore  uniquely  written in the form $w  t^z$ with $w\in \pi_1(\Sigma, x_0)\cong F(x_1, \ldots , x_n)$ and $z\in \mathbb{Z}$. 
Note  also that $\pi_1(\Sigma, x_0)=\langle\langle x_1 \rangle\rangle$ (normal closure)  as any two elements of $\{x_1,\ldots ,x_{n}\}$ are conjugate in $\pi_1(BS(\beta), x_0)$.    
 
Let $A$ denote $\pi_1(BS(\beta), x_0)$.  We say that a subgroup $U\leq A$ is \emph{meridional} if $U$ is generated by finitely many conjugates of $x_1$.    For example,   $\pi_1(\Sigma, x_0)\le A$ is meridional. We will need the following result from \cite{BDJW} which explains the behavior of the  meridional subgroups with respect to the peripheral subgroups $$P_V:=\pi_1(\partial V(\mathfrak{k}_0), x_0) \ \ \text{ and } \ \  C_V:=\pi_1(\partial V , x_0).$$ Observe that $C_V$ is  generated by $m_V, l_V$ and $P_V$ is  generated by $x_1, t^n$.
\begin{lemma}{\label{C1}}
Let   $U=\langle  g_1x_1g_1^{-1}, \ldots , g_kx_1g_k^{-1}\rangle $ with $k\geq 0$ and $g_1, \ldots , g_k \in A$. 

Then either  $U=\pi_1(\Sigma, x_0)$ (and in this case $k\ge n$)  or $U$ is freely generated by $h_1x_1h_1^{-1}, \ldots , h_m x_1 h_m^{-1}$ 
with ${m}\leq k$ and $h_1, \ldots , h_m\in A$   such that:
\begin{enumerate}
\item for any $g\in A$ one of the following holds:
\begin{enumerate}
\item  $gP_Vg^{-1}\cap U=\{1\}$.
\item  $gP_Vg^{-1}\cap U=g \langle x_1 \rangle g^{-1}$ and $gx_1g^{-1}$ is in $U$ conjugate to $h_lx_1h_l^{-1}$ for some $l\in\{1,\ldots,{m}\}$.
\end{enumerate}
 
\item for any $g\in A$ the subgroups  $gC_Vg^{-1}$ and $U $ intersect trivially.
\end{enumerate}
\end{lemma}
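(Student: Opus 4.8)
The plan is to analyze the subgroup $U$ via the action of $A = \pi_1(BS(\beta),x_0)$ on the Bass--Serre tree $T$ of the splitting $A = \pi_1(\Sigma,x_0) \rtimes \mathbb Z$, viewed as an HNN extension (or, equivalently, via the covering space of $BS(\beta)$ corresponding to the fiber $\Sigma$). The vertex group is the free group $F = \pi_1(\Sigma,x_0) = F(x_1,\dots,x_n)$, and each generator $g_i x_1 g_i^{-1}$ is a conjugate of the generator $x_1 \in F$. The key structural observation is that $x_1$ represents the boundary curve of a removed disk $\delta_1$ in the fiber surface $\Sigma$; since $\Sigma$ is planar, the cyclic subgroups $\langle x_i \rangle$ are precisely the maximal peripheral (boundary) subgroups of the surface group $F$, and $P_V = \langle x_1, t^n\rangle$, $C_V = \langle m_V, l_V \rangle$ sit inside $A$ in a controlled way relative to the tree.

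First I would dispose of the case $U \le F$ (equivalently, $U$ fixes a vertex of $T$): then $U$ is a subgroup of the free group $F$ generated by conjugates of the peripheral element $x_1$, and I would invoke the standard theory of peripheral structures on free groups of surfaces with boundary (or a direct Stallings-graph argument) to conclude that either $U = F = \pi_1(\Sigma,x_0)$ — which forces $k \ge n$ since $F$ has rank $n$ and any generating set of conjugates of a single $x_1$ must have at least $n$ elements, as the $n$ boundary classes are pairwise non-conjugate in $F$ up to the surface's peripheral structure — or $U$ is free of rank $m \le k$ with a free basis consisting of conjugates $h_l x_1 h_l^{-1}$, and the intersection of $U$ with any conjugate of a boundary subgroup $g\langle x_1\rangle g^{-1}$ is either trivial or a full peripheral cyclic subgroup conjugate into the chosen basis (this is exactly the "no accidental parabolics" property of peripheral subgroups in a free group, provable by looking at the core graph). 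This handles conclusion (1) in the vertex-fixing case, and (2) follows because $gC_Vg^{-1} \cap F$ is generated by $g m_V g^{-1} = g x_1 g^{-1}$ together with — nothing else, as $l_V$ and $t^n$ have nonzero $t$-exponent, hence $gC_Vg^{-1}\cap F = g\langle x_1\rangle g^{-1}$ or smaller, but $U \cap g\langle x_1\rangle g^{-1}$ being a peripheral cyclic does not meet $C_V$ fully because $l_V$ is a longitude, not a meridian; one checks $C_V \cap F = 1$ directly from the product structure $wt^z$.

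Next, for the general case where $U$ does not fix a vertex, I would apply the Kurosh/Bass--Serre subgroup theorem to the action of $U$ on the minimal invariant subtree $T_U \subseteq T$. The quotient graph $U\backslash T_U$ together with the induced graph-of-groups decomposition expresses $U$ as a fundamental group of a graph of groups with vertex groups of the form $U \cap gFg^{-1}$ (each a subgroup of a free group generated by conjugates of $x_1$, so handled by the previous paragraph) and edge groups of the form $U \cap gP_Fg^{-1}$ where $P_F$ is the edge group of the ambient splitting — but here is where I expect the main obstacle: I must show that these induced edge groups are trivial, so that $U$ is in fact a free product of its vertex groups (plus a free group from the loops of $U\backslash T_U$), and then reassemble a free basis of conjugates of $x_1$ of the required cardinality $m \le k$ from Grushko/Nielsen bookkeeping. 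The triviality of edge groups is the crux: an edge group of the mapping-torus splitting is (a conjugate of) $\pi_1$ of an annulus $\partial\delta_i \times I$, carrying the peripheral cyclic group $\langle x_i\rangle$; I would argue that if $U$ meets such an edge stabilizer nontrivially, the intersection is peripheral cyclic and can be absorbed into an adjacent vertex group — i.e., the graph of groups can be folded/collapsed along that edge without changing $U$ or increasing the generator count — so after finitely many such moves one reaches a decomposition with trivial edge groups. Properties (1) and (2) then transfer from each vertex group via the normal form $wt^z$: a conjugate $gP_Vg^{-1}$ meets $U$ inside a single vertex stabilizer (since $P_V$ itself is generated by $x_1$ and $t^n$, its nontrivial intersections with $U$ live in the tree in a bounded region and reduce to the vertex analysis), giving case (a) or (b); and $gC_Vg^{-1}\cap U = 1$ because $C_V \cap F = 1$ and no element of $U$ with nonzero $t$-exponent can conjugate into $C_V$ (the longitude $l_V$ is central-like but $U$, being generated by conjugates of $x_1 \in F$, maps to $0$ in $A/F \cong \mathbb Z$ only when it lies in $F$, while $C_V \not\le F$). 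The hardest technical point, to reiterate, is the controlled folding/collapsing argument showing the induced edge groups vanish and that the bound $m \le k$ is preserved throughout — everything else is an application of the subgroup theory of free groups and HNN extensions already implicit in the setup.
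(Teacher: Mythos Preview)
The paper does not prove this lemma at all; it is quoted verbatim from \cite{BDJW}, so there is no ``paper's own proof'' to compare against. That said, your proposal contains genuine errors that would need repair.

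First, the case split ``$U\le F$'' versus ``$U$ does not fix a vertex'' is vacuous. Since $F=\pi_1(\Sigma,x_0)$ is \emph{normal} in $A=F\rtimes\mathbb Z$, every conjugate $g_ix_1g_i^{-1}$ already lies in $F$, so $U\le F$ automatically. The entire Bass--Serre analysis of the ``general case'' is therefore unnecessary. Relatedly, your description of the edge group in the mapping-torus splitting as ``$\pi_1$ of an annulus $\partial\delta_i\times I$'' is wrong: the edge space is the full fiber $\Sigma$, so the edge group is all of $F$, not a cyclic peripheral subgroup. Thus the Bass--Serre tree here is a line with every vertex and edge stabilizer equal to a conjugate of $F$, and it carries no finer information.

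Second, and more seriously, your argument for conclusion~(2) rests on the claim ``$C_V\cap F=1$'', which is false. The meridian $m_V$ of $V$ is represented by the outer boundary $\partial D^2$ of the fiber $\Sigma$, so $m_V\in F$; in fact $C_V\cap F=\langle m_V\rangle$. Hence, since $U\le F$, the statement $gC_Vg^{-1}\cap U=1$ is equivalent to showing that no power of the outer-boundary word (a conjugate in $F$ of $x_1\cdots x_n$, up to orientation) lies in the proper subgroup $U$. This is the actual content of (2) and requires a real argument --- e.g.\ via the covering surface corresponding to $U$ one must show the outer boundary does not lift to a closed curve unless $U=F$. Your ``no accidental parabolics'' idea is the right mechanism for conclusion~(1), once one observes that $gP_Vg^{-1}\cap F=g\langle x_1\rangle g^{-1}$ so that (1) reduces to a peripheral-subgroup statement for the inner boundaries; but (2) concerns the \emph{outer} boundary and is not handled by the same reasoning, and your stated justification for it is simply incorrect.
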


\begin{remark}
The previous lemma tells us  that  the minimal number of conjugates of $x_1$ needed to generate a given meridional subgroup $U\le A$ coincides with  the rank of $U$, i.e.~the minimal number of elements needed to generate $U$. 
\end{remark}


\subsection{The centralizer of the meridian} This subsection is devoted to show a well-known fact, see \cite[Theorem~2.5.1]{Friedl} for example,  that says that  almost no element in the  knot group commutes with the meridian.   
\begin{lemma}{\label{lemma:commeridian}}
Let $\mathfrak k$ be a non-trivial prime knot and $g\in G(\mathfrak k)$. Then $$gmg^{-1}=m \ \text{ iff } \ g\in P(\mathfrak k).$$ 
\end{lemma}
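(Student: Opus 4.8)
The plan is to prove this using the JSJ/torus decomposition of the knot exterior together with standard facts about peripheral structures. One direction is immediate: if $g\in P(\mathfrak k)$ then since $P(\mathfrak k)\cong\mathbb Z\oplus\mathbb Z$ is abelian and $m\in P(\mathfrak k)$, we get $gmg^{-1}=m$. So the content is the forward direction: if $g$ commutes with $m$, then $g\in P(\mathfrak k)$.

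For the forward direction, I would argue as follows. Let $g$ centralize $m$. Since $\mathfrak k$ is non-trivial and prime, $E(\mathfrak k)$ is an irreducible, $\partial$-irreducible, Haken $3$-manifold whose boundary is a single torus, and $G(\mathfrak k)$ is torsion-free. The key structural input is that for an irreducible $3$-manifold with incompressible torus boundary, the centralizer of any peripheral element $m\neq 1$ is again peripheral — more precisely, the subgroup $P(\mathfrak k)$ is its own normalizer (it is a maximal abelian subgroup and ``malnormal up to the obvious intersections'' in the appropriate sense), and the centralizer of the non-peripheral-power... wait, I must be careful: $m$ is a meridian, which is never a longitude-power, so $m$ is not a proper power and $m$ generates a direct summand of $P(\mathfrak k)$. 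First I would recall the standard fact (Jaco--Shalen, or the Seifert-fibered/non-Seifert case analysis): in $\pi_1$ of an irreducible $3$-manifold with incompressible boundary, if $x,y$ are non-trivial commuting elements then they lie in a common peripheral torus subgroup or a common Seifert-fiber subgroup. Concretely, I would invoke that a knot exterior's fundamental group has the property that the centralizer of a meridian $m$ is exactly $P(\mathfrak k)$, which follows because: (i) $C_{G(\mathfrak k)}(m)\supseteq P(\mathfrak k)$; (ii) $C_{G(\mathfrak k)}(m)$ is a subgroup containing $P(\mathfrak k)$ in which $m$ is central, so it is ``essentially'' a $2$-generated abelian-by-nothing group; and (iii) using that $E(\mathfrak k)$ is not Seifert-fibered with the meridian as fiber (the meridian bounds a disk meeting $\mathfrak k$ once, so it is null-homologous in $V(\mathfrak k)$ but generates $H_1(E(\mathfrak k))$, ruling out the meridian being a regular fiber), one concludes $C_{G(\mathfrak k)}(m)=P(\mathfrak k)$.

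Concretely, here is the cleanest route I would take. Suppose $g m g^{-1}=m$. Consider the subgroup $H=\langle m, g\rangle$. Since $H$ is generated by $g$ together with the central element $m$, $H$ is abelian if and only if... no, $H$ need not be abelian. Instead: the element $m$ has infinite order and lies in the center of $H$. I would use that in a knot group, by the fact that $E(\mathfrak k)$ is aspherical with $\pi_1$ of a $3$-manifold, the centralizer of an infinite-order element that is not a power of a Seifert fiber is either peripheral or virtually cyclic. Since $m$ lies in $P(\mathfrak k)\cong\mathbb Z^2$, the centralizer $Z:=C_{G(\mathfrak k)}(m)$ contains a $\mathbb Z^2$, hence cannot be virtually cyclic, so $Z$ must be contained in a peripheral torus subgroup or a Seifert-fiber-plus-base subgroup; the latter is excluded because $m$ is a meridian (not a fiber, by the homological argument above), leaving $Z\subseteq P(\mathfrak k)$, hence $Z=P(\mathfrak k)$ and $g\in P(\mathfrak k)$.

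\medskip

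\noindent\textbf{Main obstacle.} The delicate point is justifying that the centralizer of a meridian cannot be a ``vertical'' (Seifert-fibered) subgroup — i.e. ruling out the case where $m$ is conjugate into a Seifert piece of the JSJ decomposition in such a way that its centralizer is larger than a peripheral $\mathbb Z^2$. The cleanest way to handle this is to note that the cited reference \cite[Theorem~2.5.1]{Friedl} states exactly the needed statement (the peripheral subgroup equals the centralizer/normalizer of the meridian in a non-trivial prime knot group), so in the write-up I would reduce to invoking that result, after recording the trivial reverse inclusion $P(\mathfrak k)\subseteq C_{G(\mathfrak k)}(m)$ and noting that a meridian is conjugate to $m$ so the statement for a fixed $m$ suffices. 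The primality hypothesis is what guarantees the JSJ decomposition has the right form (no essential annuli/tori forcing a connected-sum-type splitting) and that $P(\mathfrak k)$ is malnormal-enough for the argument; I would flag that this is where ``prime'' is used.
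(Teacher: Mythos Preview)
Your proposal takes a genuinely different route from the paper. The paper does not argue via the general JSJ/centralizer machinery for $3$-manifold groups; instead it invokes Weidmann's malnormality results to reduce to three cases (composite, torus, cable), dismisses the composite case by primality, cites Weidmann again for torus knots, and then gives an explicit hands-on computation for cable knots using the graph-of-groups splitting $G(\mathfrak k)=\pi_1(CS(n,m))\ast_C G(\mathfrak k_1)$: one writes $g=[p]$ for a reduced $\mathbb A$-path $p$, shows by a homology argument that $p$ must have length $0$, and then finishes inside $\pi_1(CS(n,m))$ by a normal-form calculation in $F_n\rtimes\mathbb Z$.

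Your sketch, as written, has a real gap in the Seifert-fibered situation. The assertion that ``the centralizer of an infinite-order element that is not a power of a Seifert fiber is either peripheral or virtually cyclic'' is not correct in general: in a Seifert-fibered knot exterior (torus knot, or the cable-space piece of a cable knot) the regular fiber $h$ is central, so the centralizer of \emph{any} element contains $\langle h\rangle$, and for a non-fiber element $m$ the centralizer is the preimage of the centralizer of $\bar m$ in the base-orbifold group --- which can certainly be a $\mathbb Z^2$ sitting inside the Seifert piece without being peripheral in $G(\mathfrak k)$ a priori. Your homological remark (``$m$ generates $H_1$, the fiber does not'') correctly shows $m$ is not a power of the fiber, but it does not by itself pin the centralizer down to $P(\mathfrak k)$; one still needs the kind of explicit normal-form/malnormality argument the paper carries out for cable knots (or the torus-knot computation from Weidmann). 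Falling back on \cite[Theorem~2.5.1]{Friedl} is of course legitimate, but then you are citing the result rather than proving it, whereas the paper's point is to give a short self-contained argument in the cable case using the very graph-of-groups framework developed in the surrounding sections.
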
 
\begin{proof}
It is proved in Lemma~3.1 of \cite{Weidmann} that the peripheral subgroup of $\mathfrak k$ is malnormal in $G(\mathfrak k)$, i.e.~$gP(\mathfrak k)g^{-1}\cap P(\mathfrak k)=1$ for all $g\in G(\mathfrak k)\setminus P(\mathfrak k)$,  unless $\mathfrak k$ is a torus knot or a cable knot or a composite knot. The last case does not occur since $\mathfrak k$ is assumed to be  prime. In the case of a torus knot  it is  shown  in Lemma~3.2 of \cite{Weidmann}  that  $gmg^{-1}\cap P(\mathfrak k) =1 $  for all $g\in G(\mathfrak k)\setminus P(\mathfrak k)$.

We therefore need to consider the case  where $\mathfrak k$ is a cable knot.  Thus $G(\mathfrak k)$ splits as $\pi_1(\mathbb A, v_0)$ where $\mathbb A$ is the graph of groups described  in the previous subsection.  Observe that  the meridian of $\mathfrak k$  is represented by the reduced $\mathbb A$-path $ x_1\in A_{v_0}=\pi_1(CS(n,m), x_0)$, that is, $m=[x_1]\in \pi_1(\mathbb A, v_0)$.   

Let $g\in G(\mathfrak k)=\pi_1(\mathbb A, v_0)$ such that $gmg^{-1}=m$. We can write $g=[p]$  where 
$$p=a_0 , e , a_1, \ldots , a_{2l-1} , e^{-1}, a_{2l}$$
is a reduced $\mathbb A$-path of length $2l\ge 0$. Then $gmg^{-1}=m$ implies that  the $\mathbb A$-paths 
$$  a_0 , e , a_1, \ldots , e^{-1}, a_{2l} \cdot x_1 \cdot a_{2l}^{-1} , e, \ldots, a_1^{-1} , e^{-1}, a_0^{-1} \ \ \text{ and } \   \  x_1$$   
 are equivalent, see \cite[p.612]{RW} or \cite[Definition~2.3]{KMW}. If $l\geq 1$ then we can apply a reduction to  $p x_1p^{-1}$. Since $p$ is  reduced we  conclude that  $a_{2l} x_1 a_{2l}^{-1}$ is conjugate in the free group  $F_n=\langle x_1, \ldots , x_n\rangle\leq A_{v_0}$ to  an element of $\alpha_e(A_e)=C_V$. A simple homology argument shows that this cannot occur.     Thus $l=0$ and so the equality $gmg^{-1}=m$ reduces to $a_0x_1a_0^{-1}=x_1$.  Write $a_0= ut^z$ where $u\in F_n$ and $z\in \mathbb Z$. Then $$a_0x_1a_0^{-1}= u x_{\tau^z(i)} u^{-1}.$$  Hence  $a_0x_1a_0^{-1}=  x_1$   iff $\tau^z(1)=1$.  The second equality holds only when  $z=nk$ for some $k$.  Since $ux_1u^{-1}=x_1$  in $F_n$  we   conclude that  $u=x_1^w$ for some $w\in \mathbb Z$. Therefore,  $g=x_1^w(t^{n})^k$ which shows that  $g\in P_V=P(\mathfrak k)$. \end{proof}

 
\subsection{Algebraically tame knots} Let $\mathfrak k\subseteq \mathrm{S}^3$ be a knot. We call a subgroup $U \le  G(\mathfrak k)$ \emph{meridional} if $U$ is generated by finitely many meridians of $\mathfrak k$. The minimal number of meridians needed to generated a meridional subgroup $U$, denoted by $w(U)$, is called the \emph{meridional rank of $U$}. Observe that the knot group  $G(\mathfrak k)$  is meridional and   $w(G(\mathfrak k))$ equals the meridional rank $w(\mathfrak k)$ of $\mathfrak k$.

\begin{definition}
A meridional subgroup $U=\langle  g_1 m g_1 , \ldots , g_r m g_r\rangle  \le G(\mathfrak k)$ of meridional rank $r:=w(U)$ is called \emph{tame} if the following hold:
\begin{enumerate}
\item  the meridians $g_img_i$ and $g_jmg_j$ ($1\le i\neq j\le r$) are not conjugate in $U$.

\item for any $g\in G(\mathfrak k)$  either  $U\cap gP(\mathfrak k)g^{-1} = 1$ or  $$U\cap gP(\mathfrak k)g^{-1} = g\langle m\rangle g^{-1}$$  and $gmg^{-1}$ is in $U$ conjugate to $g_img_i$ for some $1\le i\le r$. 
\end{enumerate}
\end{definition}
\begin{definition}
A non-trivial knot $\mathfrak k$ is called \emph{algebraically tame} if any meridional subgroup  of $G(\mathfrak k)$ that is generated by less than $b(\mathfrak k)$  meridians is tame.
\end{definition}

\begin{remark}
Observe that for any algebraically tame knot $\mathfrak k$ the equality $ w(\mathfrak k)=b(\mathfrak k)$ holds since otherwise  $G(\mathfrak k)$ would be a tame subgroup. 
\end{remark}

\begin{remark}
A  (possibly) larger class of knots  (called meridionally tame knots) is  defined in \cite{BDJW}. The author does not know any example of a prime knot  that  is  algebraically tame but not meridionally tame.  
\end{remark}

\begin{example}
Whitehead doubles are never algebraically tame since their exterior contains  a properly immersed  $\pi_1$-injective  pair of pants in which two boundary components are mapped to meridional curves of  $\mathfrak k$, see~\cite{Agol}.
\end{example}

\begin{proposition}
Two bridge knots, torus knots and prime three bridge knots are algebraically tame. 
\end{proposition}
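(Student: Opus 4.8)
The plan is to verify each of the three classes separately, and in each case the task reduces to controlling meridional subgroups generated by fewer than $b(\mathfrak k)$ meridians. First I would dispose of two-bridge knots: here $b(\mathfrak k)=2$, so a meridional subgroup generated by fewer than $b(\mathfrak k)$ meridians is generated by a single meridian, hence is infinite cyclic, and tameness of such a subgroup is immediate — condition (1) is vacuous and condition (2) follows from the fact that distinct conjugates of $\langle m\rangle$ intersect trivially, which for a two-bridge knot (a nontrivial prime knot) is a consequence of Lemma~\ref{lemma:commeridian} together with the well-known structure of two-bridge knot groups. So the two-bridge case is essentially formal.

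For torus knots, $G(\mathfrak k)$ is a central extension with a well-understood structure (an amalgam $\mathbb Z *_{\mathbb Z} \mathbb Z$ modulo center, or directly a triangle-group-like presentation), and $b(\mathfrak k)$ equals the smaller of the two torus parameters. The relevant input is already cited inside the excerpt: the proof of Lemma~\ref{lemma:commeridian} invokes Lemma~3.2 of \cite{Weidmann}, which asserts $g m g^{-1}\cap P(\mathfrak k)=1$ for $g\notin P(\mathfrak k)$, i.e.\ the conjugates of the meridional cyclic subgroup form a malnormal family in the appropriate sense. I would combine this with a rank/Nielsen-type analysis of subgroups of torus knot groups generated by few meridians — using the amalgam decomposition and Bass–Serre theory (or the folding machinery of \cite{RW} referenced in the paper) — to show that any such subgroup $U$ either has rank equal to its minimal number of meridional generators with those generators pairwise non-conjugate in $U$ (condition (1)), and that $U$ meets each peripheral conjugate either trivially or in a conjugate of $\langle m\rangle$ realized by one of the chosen generators (condition (2)). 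The key point is that a proper meridionally generated subgroup of a torus knot group, having meridional rank $r<b(\mathfrak k)$, cannot ``accidentally'' contain extra peripheral elements, and this is forced by the homological/linking structure exactly as in the cable-knot argument in the proof of Lemma~\ref{lemma:commeridian}.

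For prime three-bridge knots, $b(\mathfrak k)=3$, so the meridional subgroups to check are those generated by one or two meridians. The one-meridian case is handled as in the two-bridge situation. The two-meridian case is the substantive one: a subgroup $U=\langle g_1 m g_1^{-1}, g_2 m g_2^{-1}\rangle$ of meridional rank $2$. Here I would appeal to the Boileau–Zimmermann-type classification of knots whose group is generated by two meridians \cite{Boi6} (two-bridge knots), which shows that a rank-two meridionally generated subgroup of a knot group is essentially forced to look like a two-bridge knot group or a free group on the two meridians; in either case conditions (1) and (2) follow — the two meridians are non-conjugate in $U$ (else $U$ would be cyclic, contradicting $w(U)=2$), and the peripheral-intersection property follows from malnormality-type results (Lemma~3.1 of \cite{Weidmann}, applicable since $\mathfrak k$ is prime and three-bridge, hence not a torus or cable knot of the excluded form, or handled directly when it is). I expect the main obstacle to be precisely this last step: establishing condition (2) for rank-two subgroups of a general prime three-bridge knot group, since one must rule out the possibility that $U$ picks up a longitude-type element of some peripheral conjugate without increasing its meridional rank. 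This is where the malnormality of the peripheral subgroup (which holds for prime non-torus, non-cable knots) does the work, and the remaining torus/cable three-bridge cases must be checked by hand against their explicit group structure; I would organize the argument so that the folding/graph-of-groups framework of Section~\ref{subsect:satellite} supplies the bookkeeping needed to conclude tameness.
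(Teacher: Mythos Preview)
Your two-bridge case matches the paper. For torus knots and prime three-bridge knots, however, the paper's proof is essentially a pair of citations: condition~(1) for torus knots is Theorem~1.2 of \cite{RostZ} (meridional subgroups of rank $<b(\mathfrak k)$ are freely generated by meridians), condition~(2) is \cite[Lemma~6.1]{BDJW}, and the entire three-bridge case is deferred to the proof of \cite[Proposition~7.1]{BDJW}. Your plan for torus knots via Bass--Serre/folding is in the spirit of what Rost--Zieschang actually do, so that part is plausible if underspecified.

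There is a genuine gap in your three-bridge plan. The Boileau--Zimmermann result \cite{Boi6} says that if the \emph{knot group itself} is generated by two meridians then the knot is two-bridge; it does not say anything about the structure of an arbitrary two-generator meridional \emph{subgroup} $U\le G(\mathfrak k)$. Such a $U$ is not a priori a knot group, so there is no reason it should ``look like a two-bridge knot group or a free group'' --- that dichotomy is exactly what must be proved, and it is the content of the argument in \cite{BDJW} that the paper invokes. Likewise, your parenthetical that a prime three-bridge knot is ``not a torus or cable knot of the excluded form'' is false (e.g.\ $(3,q)$ torus knots are three-bridge), and while you acknowledge residual cases, your outline gives no mechanism for the core step: showing that a rank-two meridional subgroup of a hyperbolic three-bridge knot group cannot contain any nontrivial peripheral element beyond conjugates of $m$. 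Malnormality of $P(\mathfrak k)$ controls intersections with a \emph{single} cyclic meridional subgroup, but for a rank-two $U$ you need more --- this is where the actual work in \cite[Proposition~7.1]{BDJW} lies, and your proposal does not supply a substitute.
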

\begin{proof}
First assume that $\mathfrak k$ is a $2$-bridge knot. Thus we need to show that any  meridional subgroup generated by a single meridian is tame.   Let $U=\langle a ma^{-1}\rangle\le G(\mathfrak k)$. We can assume that $a=1$.  Since $\mathfrak k$ is neither a cable knot nor a composite knot, Lemma~\ref{lemma:commeridian} implies  that $gP(\mathfrak k) g^{-1} \cap U\neq 1$ iff $g\in P(\mathfrak k)$. Therefore $$gP(\mathfrak k) g^{-1} \cap U\neq 1 \  \ \text{ iff} \ \   g P(\mathfrak k) g^{-1} \cap U= \langle m\rangle.$$ Clearly  $gmg^{-1}$ is  conjugate in $U$ to $m$. 
 
Assume now that $\mathfrak k$ is a torus knot. It is shown in Theorem~1.2 of \cite{RostZ} that any meridional subgroup of $\mathfrak k$ of meridional rank $<b(\mathfrak k)$ is freely generated by meridians. This shows condition (1) of tameness. Condition (2) is implicit in \cite{RostZ} and an explicit argument is given in  \cite[Lemma~6.1]{BDJW}.

The algebraic tameness of prime $3$-bridge knots  follows from  the proof of  Proposition~7.1 of \cite{BDJW}.  
\end{proof}


\section{The pattern space of  a Whitehead double}
In this section we study the fundamental group  of the pattern  space $E:=V\setminus Int \ V(\mathfrak k_0)$   where $(V, \mathfrak k_0)$ is the Whitehead pattern and $V(\mathfrak k_0)$ is a regular neighborhood of $\mathfrak k_0$ in the interior of $V$.  
\begin{figure}[h!] 
\begin{center}
\includegraphics[scale=1]{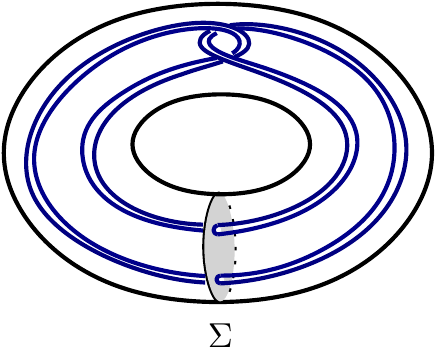}
\caption{The pattern space $E=V\setminus   Int  \ V(\mathfrak k_0)$.}{\label{fig:whiteheadspace}}
\end{center}
\end{figure}
Let $\Sigma\subseteq E$ be the properly embedded  pair of pants     shown in Figure~\ref{fig:whiteheadspace} and let $E_0$ be the space  obtained from $E$ by cutting along $\Sigma$, see Figure~\ref{fig:E0}. The group $\pi_1(E_0, e_0)$ (resp.~$\pi_1(\Sigma, \sigma_0)$)  is freely generated by the elements $x_1$ and $x_2$  (resp.~$y_1$ and $y_2$)  whose representatives are described in Figure~\ref{fig:E0}.   It follows from the theorem  of  Seifert and van-Kampen that   $\pi_1(E, e_0)$ splits as an HNN extension:
$$\pi_1(E, e_0) =\pi_1(E_0, e_0)\ast_{(\alpha, \pi_1(\Sigma, \sigma_0) , \omega)}$$ where:
\begin{enumerate}
\item[•] $\alpha :\pi_1(\Sigma, \sigma_0)\rightarrow \pi_1(E_0, e_0)$ is given by  $$\alpha(y_1)= x_2 x_1^{-1} x_2^{-1} \ \text{ and } \   \alpha(y_2)=x_1.$$  

\item[•]    $\omega:\pi_1(\Sigma, \sigma_0)\rightarrow \pi_1(E_0, e_0)$ is given by  $$\omega(y_1)=x_2x_1^{-1} x_2^{-1} x_1 x_2^{-1}  \ \text{ and } \  \omega(y_2)= x_2.$$  
\end{enumerate}
\begin{figure}[h!] 
\begin{center}
\includegraphics[scale=1]{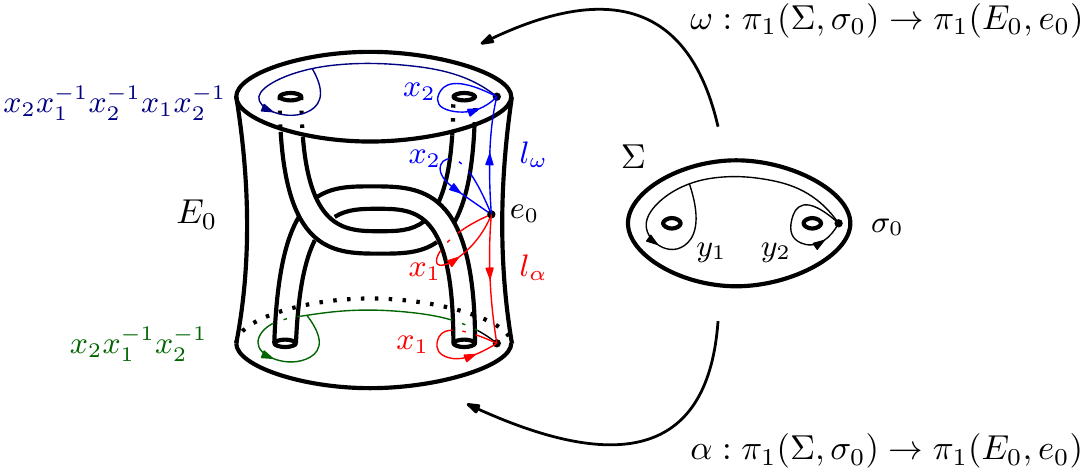}
\caption{The fundamental groups of $E_0$ and $\Sigma$.}{\label{fig:E0}}
\end{center}
\end{figure}

A presentation for $\pi_1(E, e_0)$ is therefore given by
$$\pi_1(E, e_0)= \langle x_1, x_2, l_V \ | \   l_V   x_2x_1^{-1} x_2^{-1} x_1 x_2^{-1}   l_V^{-1}=x_2 x_1^{-1} x_2^{-1} \ \text{ and  } \ l_V   x_2   l_V^{-1}=x_1\rangle$$
where $l_V$ is represented by the path $l_{\alpha}l_{\omega}^{-1}$ (which is  a   longitude of $V$). The meridian of $V$ (which coincides with the outer boundary of $\Sigma$)  represents the element $$m_V:=x_2x_1^{-1}x_2^{-1}  x_1= \alpha(y_1y_2)= \omega(y_1y_2)=x_2x_1^{-1} x_2^{-1} x_1 x_2^{-1} x_2.$$  
Define $C_V:=\langle m_V, l_V\rangle \cong \pi_1(\partial V, e_0)\leq \pi_1(\partial E, e_0)$.

\smallskip

For the next lemmas we denote  the subgroups $\alpha(\pi_1(\Sigma, \sigma_0)) $ and $ \omega(\pi_1(\Sigma, \sigma_0))$ of $\pi_1(E_0, e_0)$ by $U_\alpha$ and $U_\omega$ respectively.  
\begin{lemma}{\label{lemma:conjsep}}
$U_\alpha$ and $U_\omega$ are conjugacy separable in $\pi_1(E_0, e_0)$ meaning that  the intersection $U_\alpha\cap g U_\omega g^{-1}$ is trivial for all $g\in \pi_1(E_0, e_0)$.
\end{lemma}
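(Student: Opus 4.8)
The plan is to work inside the free group $F=\pi_1(E_0,e_0)=F(x_1,x_2)$ and show that the subgroups $U_\alpha=\alpha(\pi_1(\Sigma))$ and $U_\omega=\omega(\pi_1(\Sigma))$ are both free of rank $2$ whose generators form part of a recognizable "Stallings" structure, so that their intersections with conjugates can be controlled. Concretely, $U_\alpha=\langle x_2x_1^{-1}x_2^{-1},\,x_1\rangle$ and $U_\omega=\langle x_2x_1^{-1}x_2^{-1}x_1x_2^{-1},\,x_2\rangle$. Since $\{x_2x_1^{-1}x_2^{-1},x_1\}$ is obtained from the basis $\{x_1,x_2\}$ by a Nielsen transformation (it is $\{x_1, x_2x_1^{-1}x_2^{-1}\}$ up to reordering, and $\langle x_1, x_2x_1^{-1}x_2^{-1}\rangle = \langle x_1, x_2 x_1^{-1} x_2^{-1}\rangle$), one checks that $U_\alpha$ has infinite index; similarly $U_\omega=\langle x_2,\,x_2x_1^{-1}x_2^{-1}x_1x_2^{-1}\rangle$ is a rank-$2$ free subgroup of infinite index. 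The first step is therefore to pin down each $U_\bullet$ as the fundamental group of an explicit immersed (or better, embedded) graph covering the rose $R_2$ with two petals labelled $x_1,x_2$, via the Stallings folding of the given generators.

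**Main computation.** The core step is to build the Stallings graph $\Gamma_\alpha$ for $U_\alpha$ and $\Gamma_\omega$ for $U_\omega$ and then to analyze the fiber product $\Gamma_\alpha\times_{R_2}\Gamma_\omega$, whose connected components compute the subgroups $U_\alpha\cap gU_\omega g^{-1}$ as $g$ ranges over coset representatives (this is the standard Stallings–Gersten description of intersections of conjugates, see e.g. Kapovich–Myasnikov). I would show that every connected component of this pullback graph is a tree, equivalently that the pullback has no essential circuits; this is exactly the assertion that $U_\alpha\cap gU_\omega g^{-1}=\{1\}$ for all $g\in F$. In practice one reads the edge-words: $U_\alpha$ contains the cyclic words one gets from $x_1^{\pm}$ and $(x_2x_1^{-1}x_2^{-1})^{\pm}$, while $U_\omega$ is built from $x_2^{\pm}$ and $(x_2x_1^{-1}x_2^{-1}x_1x_2^{-1})^{\pm}$; a nontrivial common element would have to be simultaneously reduced-expressible in both generating sets, and matching the $x_2$-exponent-sum patterns (or reading off a homology/abelianization obstruction in $H_1(E_0)$ together with the $x_1$- vs $x_2$-petal structure of the folded graphs) rules this out. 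I expect to organize this as: (i) draw $\Gamma_\alpha$, $\Gamma_\omega$ explicitly with base vertices; (ii) form the product graph; (iii) observe each component deformation-retracts to a point.

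**The main obstacle.** The hard part will be handling the conjugating element $g$ uniformly: the fiber product naturally computes $\operatorname{core}(U_\alpha)\cap \operatorname{core}(U_\omega)$-type data only relative to the \emph{full} set of double cosets, so I must either invoke the general Stallings fiber-product theorem (which packages all $g$ at once) or, if I want a self-contained argument, give a direct normal-form argument in $F(x_1,x_2)$: take a nontrivial $u\in U_\alpha\cap gU_\omega g^{-1}$, write $u=W(x_1,\,x_2x_1^{-1}x_2^{-1})$ reduced and also $u=g\,W'(x_2,\,x_2x_1^{-1}x_2^{-1}x_1x_2^{-1})\,g^{-1}$, and derive a contradiction by comparing cyclically reduced forms and total $x_1$-length, using that $x_2x_1^{-1}x_2^{-1}$ is a common "syllable" whose overlaps are forced. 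I would also double-check that the lemma's phrasing ("conjugacy separable ... meaning the intersection is trivial") is really the malnormal-type statement $U_\alpha\cap gU_\omega g^{-1}=1$ for \emph{all} $g$, including $g\in U_\alpha$ or $g\in U_\omega$, which forces in particular $U_\alpha\cap U_\omega=1$; that special case $g=1$ is the cleanest sanity check and I would verify it first by a short Nielsen-cancellation computation before attempting the general $g$.

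**Why this suffices downstream.** Once Lemma~\ref{lemma:conjsep} is established, it feeds the analysis of $\pi_1(E,e_0)$ as the HNN extension with associated subgroups $U_\alpha,U_\omega$: conjugacy-separation of the edge groups is precisely what is needed to understand reduced HNN-words and to apply the folding machinery of \cite{RW} to meridional subgroups of the Whitehead double, so I would state it in the strong "for all $g$" form even though only finitely many configurations of $g$ arise later.
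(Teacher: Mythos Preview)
Your Stallings fiber-product plan is a standard and reasonable way to compute $U_\alpha\cap gU_\omega g^{-1}$ inside a free group, and it is far more elaborate than what the paper actually offers: the paper's entire proof is the single sentence ``The lemma follows by a simple homology argument.'' That argument is presumably the observation that under the abelianization $F(x_1,x_2)\to\mathbb Z^2$ the subgroups $U_\alpha$ and $gU_\omega g^{-1}$ map into $\mathbb Z\times\{0\}$ and $\{0\}\times\mathbb Z$ respectively (since $\alpha(y_i)$ has zero $x_2$–exponent sum and $\omega(y_i)$ has zero $x_1$–exponent sum), so any common element has trivial image.

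The more serious point is that the lemma, taken literally, is false, and the sanity check at $g=1$ that you yourself propose would expose this immediately. Just above the lemma the paper records
\[
m_V \;=\; x_2x_1^{-1}x_2^{-1}x_1 \;=\; \alpha(y_1y_2) \;=\; \omega(y_1y_2),
\]
so $1\neq m_V\in U_\alpha\cap U_\omega$. The homology argument only forces a common element into the commutator subgroup, and indeed $m_V=[x_2,x_1^{-1}]$ is exactly such a commutator; correspondingly, in your fiber product $\Gamma_\alpha\times_{R_2}\Gamma_\omega$ the base component is not a tree but carries the infinite cyclic group $\langle m_V\rangle$. What the homology argument \emph{does} prove, and what is genuinely used downstream (e.g.\ in Lemma~\ref{lemma:folds}(1)), is the weaker statement that no nontrivial element of the form $hx_1^kh^{-1}$ lies in any conjugate of $U_\omega$, and symmetrically with $x_2$ and $U_\alpha$: such an element has abelianization $(k,0)$ with $k\neq 0$, hence cannot lie in $\{0\}\times\mathbb Z$. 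So neither your approach nor the paper's one-liner can establish the assertion as written; the correct target is this weaker homological statement.
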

\begin{proof}
The lemma follows by a simple homology argument.
\end{proof}

\begin{lemma}{\label{lemma:normalizer}}
Let  $a\in \pi_1(E_0, e_0)$. Then the following hold:
\begin{enumerate}
\item[(1)] $a\langle m_V\rangle a^{-1}\cap U_{\alpha}\neq 1$  iff $a\in U_{\alpha} $. Similarly, $a\langle m_V\rangle a^{-1}\cap U_{\omega}\neq 1$  iff $a\in U_{\omega} $

\item[(2)] $a\langle x_1\rangle a^{-1} \cap U_{\alpha}\neq 1$ iff  $a=ux_2^\varepsilon x_1^k$  with $u\in U_{\alpha}$,  $\varepsilon \in \{0,1\}$ and   $k\in \mathbb Z$. 

\noindent Similarly, $a\langle x_2\rangle a^{-1} \cap U_{\omega}\neq 1$ iff $a=u(x_2x_1^{-1})^\varepsilon x_2^k$  with  $u\in U_{\omega}$,  $\varepsilon \in \{0,1\}$ and   $k\in \mathbb Z$. 

\item[(3)] $aU_{\alpha}a^{-1} \cap U_{\alpha}\neq 1$ iff $a=u_1x_2^{\varepsilon}u_2$ with  $u_1, u_2\in U_{\alpha}$ and  $\varepsilon\in \{-1 ,0,1\}$.  Similarly,    $aU_{\omega}a^{-1} \cap U_{\omega}\neq 1$ iff $a=u_1(x_2x_1^{-1})^{\varepsilon}u_2$ with  $u_1, u_2\in U_{\omega}$ and  $\varepsilon\in \{-1 ,0,1\}$
\end{enumerate}
\end{lemma}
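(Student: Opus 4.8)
The plan is to work in the free group $\pi_1(E_0,e_0)=F(x_1,x_2)$ and to analyze each intersection by studying the associated rank-two (or rank-one) subgroups and their normal forms. The key point is that $U_\alpha$ is freely generated by $x_2x_1^{-1}x_2^{-1}$ and $x_1$ while $U_\omega$ is freely generated by $x_2x_1^{-1}x_2^{-1}x_1x_2^{-1}$ and $x_2$; note $m_V=x_2x_1^{-1}x_2^{-1}x_1$ is the commutator $[x_2^{-1},x_1^{-1}]$-type element (indeed $m_V=\alpha(y_1y_2)=\omega(y_1y_2)$) and it lies in both $U_\alpha$ and $U_\omega$. I would first record the basic structural facts: $U_\alpha$ and $U_\omega$ are each free of rank $2$, they are the fundamental groups of the two copies of $\Sigma$ glued in, and the element $m_V$ generates a peripheral (boundary-of-pants) cyclic subgroup in each. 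The argument will be symmetric in the two halves, so I would prove the $\alpha$-statements and remark that the $\omega$-statements follow by the obvious change of free basis (sending $x_1\mapsto x_2$, $x_2\mapsto x_2x_1^{-1}$, which carries the generators of $U_\alpha$ to those of $U_\omega$).

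For part (1), I would use the standard fact that in a free group the intersection of two cyclic subgroups is nontrivial iff they have a common root, together with the observation that $m_V$ is not a proper power and $x_1$ is primitive: so $a\langle m_V\rangle a^{-1}\cap U_\alpha\neq 1$ forces some nonzero power $am_V^ka^{-1}$ to lie in $U_\alpha$; since $m_V\in U_\alpha$ already and $m_V$ is not a proper power in $F(x_1,x_2)$, malnormality-type considerations for the free subgroup $U_\alpha$ (or a direct Stallings-folding / graph-of-spaces computation on the pair of pants) give $a\in U_\alpha$. For part (2), the intersection $a\langle x_1\rangle a^{-1}\cap U_\alpha\neq 1$ means a power of $x_1$, conjugated by $a$, lands in $U_\alpha$; since $x_1\in U_\alpha$, I would classify the conjugates of $x_1$ that remain in $U_\alpha$ up to the subgroup. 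Concretely, $x_1$ and $x_2x_1x_2^{-1}$ behave differently: inside $U_\alpha=F(x_2x_1^{-1}x_2^{-1},x_1)$ the element $x_2x_1x_2^{-1}$ equals $(x_2x_1^{-1}x_2^{-1})^{-1}$, so conjugating $x_1$ by $x_2$ keeps us in $U_\alpha$, and this is the source of the $x_2^\varepsilon$ factor with $\varepsilon\in\{0,1\}$; the $x_1^k$ factor is the centralizer of $x_1$. I would make this precise by passing to the covering-space / Bass–Serre picture of the pair of pants $\Sigma$ and reading off exactly which elliptic elements are conjugate into the boundary; equivalently, one can do a finite Stallings-graph computation. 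Part (3) is the same analysis one dimension up: $aU_\alpha a^{-1}\cap U_\alpha\neq 1$ is controlled by how far $U_\alpha$ fails to be malnormal in $F(x_1,x_2)$, and the pair of pants $\Sigma$ sitting in the pattern space meets a push-off of itself only along the single "separating" arc dual to $x_2$, which produces precisely the double cosets $U_\alpha x_2^\varepsilon U_\alpha$ with $\varepsilon\in\{-1,0,1\}$.

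The main obstacle will be part (3): establishing the exact (and not merely up-to-finite-index) description of the "intersection pattern" $\{a : aU_\alpha a^{-1}\cap U_\alpha\neq 1\}$. A clean way to handle this, which I would adopt, is to recognize $E_0$ (the pattern space cut along $\Sigma$) as a graph of spaces whose fundamental group splits, making $U_\alpha$ and $U_\omega$ vertex or edge subgroups, so that the Bass–Serre tree lets one read off the required malnormality-up-to-explicit-double-cosets directly from the geometry of how the two pairs of pants interact; alternatively one can grind it out with normal forms in $F(x_1,x_2)$, checking that a putative nontrivial element $w\in U_\alpha\cap aU_\alpha a^{-1}$ with $a$ reduced and not of the stated form leads, after cyclic reduction, to a contradiction with freeness. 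In either approach the rank-one statements (1) and (2) are then easy consequences obtained by intersecting the rank-two pattern with the cyclic peripheral subgroups $\langle m_V\rangle$ and $\langle x_1\rangle$.
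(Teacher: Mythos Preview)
Your plan is essentially correct and aligns with the paper's approach: the paper proves the $U_\alpha$ statements and remarks that $U_\omega$ is analogous, uses the Stallings/covering-space picture (the cover of the rose $R_2$ corresponding to $U_\alpha$) to read off (1) and (2) exactly as you suggest, and for (3) carries out the normal-form alternative you mention rather than a Bass--Serre argument. Concretely, the paper writes down the shape of reduced words in $U_\alpha$,
\[
x_1^{n_0}\cdot x_2x_1^{n_1}x_2^{-1}\cdot x_1^{n_2}\cdots x_2x_1^{n_d}x_2^{-1},
\]
then minimizes $|a|$ over the double coset $U_\alpha a U_\alpha$ and shows that any minimizer not equal to $x_2^{\pm1}$ or $1$ admits a further shortening, a contradiction. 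This is precisely your ``grind it out with normal forms'' option.

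Two small points where your sketch deviates from the paper. First, you propose to deduce (1) and (2) from (3); the paper instead proves them independently (and more cheaply) from the covering graph, since (3) only gives $a\in U_\alpha x_2^{\varepsilon}U_\alpha$ and one still has to rule out $\varepsilon=\pm1$ for the specific cyclic subgroups, which is not automatic. Second, your phrase ``malnormality-type considerations'' for (1) is where the actual content lives: $U_\alpha$ is \emph{not} malnormal (that is the whole point of (3)), so what is really being used is the covering-space fact that the lift of $m_V$ closes up only at the basepoint; the paper makes this explicit. Your Bass--Serre alternative for (3) would also work but is not what the paper does.
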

\begin{proof}
We   prove the above claims for $U_{\alpha}$. The argument for   $U_{\omega}$  is similar. We identify $\pi_1(E_0, e_0)$ with  the fundamental group of the rose $R_2$ with two petals. 

Let  $p:(\Gamma, u_0)\to (R_2, v)$ be the covering  corresponding to $U_{\alpha}= \langle x_1, x_2x_1^{-1}x_2^{-1}\rangle$, see   Figure~\ref{fig:cover}. 

\noindent (1) The result follows  from the fact a lifting $\widetilde{m}$ of $ m_V^k=(x_2  x_1^{-1} x_2^{-1} x_1)^k$ is closed iff $\widetilde{m}$ stars (and therefore also ends) at the vertex $u_0$.  
\begin{figure}[h!] 
\begin{center}
\includegraphics[scale=1]{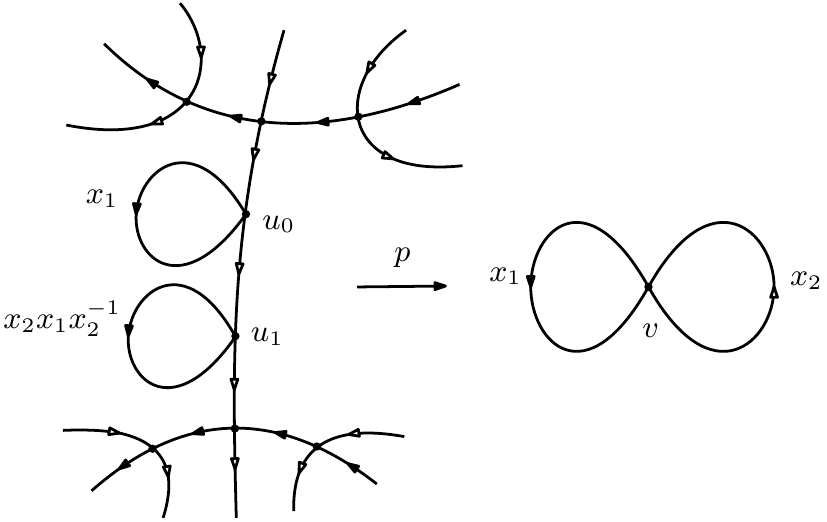}
\caption{The covering corresponding to $U_{\alpha}=\langle x_2x_1x_2^{-1}, x_1\rangle$.}{\label{fig:cover}}
\end{center}
\end{figure}

\noindent (2)  The argument is similar to the argument given in (1).  A lifting of $\tilde{l}$ of  $x_1^z$  is closed iff $\tilde{l}$ starts at $u_0$ or at $u_1$. Therefore any lifting of $a$  starting at $u_0$ either terminates at $u_0$ (which implies that $a$ lies in $U_{\alpha}$)  or  terminates at $u_1$ (which implies that $a=u x_2 x_1^k$ with $u\in U_\alpha$ and $k\in \mathbb Z$).

\noindent (3) First observe that reduced words (in the basis $\{x_1, x_2\}$ of $\pi_1(E_0, e_0)$) that represent elements of $U_\alpha$ are  of the form 
\begin{equation}\label{eq:normalform}
x_1^{n_0}\cdot x_2x_1^{n_1}x_2^{-1}\cdot   x_1^{n_2} \cdot \ldots \cdot x_1^{n_{d-1}}\cdot  x_2x_1^{n_d}x_2^{-1} \tag{*} 
\end{equation} 
with $n_0, n_d\in \mathbb Z$ and $n_1, \ldots , n_{d-1}\in \mathbb Z\setminus\{0\}$.

After multiplying $a$  on the right and on the left by elements of $U_\alpha$ we  can assume  that whenever  $a=u_1 a'u_2$  with $u_1, u_2\in U_\alpha$ and $|a'|\le |a|$ then $u_1=u_2=1$, where $|a|$ denotes the word length of $a$ with respect to the basis $\{x_1, x_2\}$. We will refer to this assumption as the \emph{length assumption on $a$}.

We will show that $a=x_2^{\varepsilon}$ for some $\varepsilon\in \{-1,0,1\}$.  
First observe that if $a=x_2^s$ with $s\in \mathbb Z$ then (\ref{eq:normalform}) implies that  $a U_\alpha a^{-1}\cap U_\alpha\neq 1$ iff $s\in \{-1,0,1\}$. 

Assume that $a$ is not a power of $x_2$. The length assumption on $a$ implies that $a$ can be  written as $x_2^r \cdot b\cdot  x_2^s$ with $r, s\in \mathbb Z\setminus\{0\}$ and  the first and last letters in $b$ are   $x_1^{\pm 1}$. If $s\neq -1$  then for any $u\in U$ the reduced word that represents $aua^{-1}$ contains the initial sub-word $x_2^r b$. From (\ref{eq:normalform})  we conclude that  that $r=1$.  But $b=x_1^{l}c$ for some $l\neq0 $ and some $c$ (which might be trivial). Thus $$a= x_2 x_1^l c x_2^s= x_2 x_1^l x_2^{-1} \cdot  x_2 c x_2^s.$$  with  $x_2x_1^lx_2^{-1}\in U$ and $|x_2 c x_2^s| <|a|$, a contradiction to the length assumption. Assume now that $s=-1$. The assumption on $b$ implies that  $b =cx_1^l$ for some $l\neq0$.  Thus $$a= x_2^r b x_2^{-1} =  x_2^r cx_1^l x_2^{-1}    =  x_2^r c x_2^{-1}\cdot  x_2x_1^l x_2^{-1} $$  with  $x_2 x_1^lx_2^{-1}\in U$ and $|x_2^r cx_2^{-1}|<|a|$  which  contradicts   the length assumption.\end{proof}

\begin{remark}{\label{rem:intersection}}
It follows from item (1) that    $a\langle m_V\rangle a^{-1}\cap U_\alpha$ is either trivial or equal to $a\langle m_V\rangle a^{-1}$. Item (2)  implies that    $a\langle x_1\rangle a^{-1} \cap U_{\alpha}$ is either trivial or equal to $a\langle x_1\rangle a^{-1}$. The same claims  also holds for $U_\omega$. 
\end{remark}

From Lemma~\ref{lemma:normalizer}(3)  we immediately obtain the following corollary.
\begin{corollary}{\label{cor:normalizer}}
 $ N_{\pi_1(E_0, e_0)}(U_\alpha)=U_\alpha $ and $ N_{\pi_1(E_0, e_0)}(U_\omega)=U_\omega$ 
where $N_{\pi_1(E_0, e_0)}(U )$ denotes the normalizer of $U\leq \pi_1(E_0, e_0)$. 
\end{corollary}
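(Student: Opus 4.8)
The plan is to derive Corollary~\ref{cor:normalizer} directly from Lemma~\ref{lemma:normalizer}(3). First I recall what must be shown: the normalizer $N_{\pi_1(E_0,e_0)}(U_\alpha)$ is the set of $a\in\pi_1(E_0,e_0)$ with $aU_\alpha a^{-1}=U_\alpha$. The inclusion $U_\alpha\subseteq N_{\pi_1(E_0,e_0)}(U_\alpha)$ is immediate. For the reverse inclusion, suppose $a$ normalizes $U_\alpha$. Since $U_\alpha$ is non-trivial (it is free of rank $2$), we have $aU_\alpha a^{-1}\cap U_\alpha=U_\alpha\neq 1$, so Lemma~\ref{lemma:normalizer}(3) applies and gives $a=u_1 x_2^{\varepsilon}u_2$ with $u_1,u_2\in U_\alpha$ and $\varepsilon\in\{-1,0,1\}$. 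If $\varepsilon=0$ then $a\in U_\alpha$ and we are done, so the only thing to rule out is $\varepsilon=\pm 1$.

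The key step is therefore to show that $x_2$ (equivalently $x_2^{-1}$) does not normalize $U_\alpha$: if it did, then since $u_1,u_2\in U_\alpha$ normalize $U_\alpha$, the element $a=u_1x_2^{\varepsilon}u_2$ would normalize $U_\alpha$ only in the trivial case. Concretely, it suffices to exhibit one element $g\in U_\alpha$ with $x_2 g x_2^{-1}\notin U_\alpha$. The natural candidate is $g=x_1\in U_\alpha$: then $x_2 x_1 x_2^{-1}$, written as a reduced word in the basis $\{x_1,x_2\}$, is not of the form (\ref{eq:normalform}) characterizing elements of $U_\alpha$ — indeed (\ref{eq:normalform}) only contains the syllable $x_2 x_1^n x_2^{-1}$ with $x_2$ on the left and $x_2^{-1}$ on the right of the $x_1$-block, while here the conjugation by $x_2$ (not $x_2^{-1}$) produces the opposite pattern. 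Hence $x_2 x_1 x_2^{-1}\notin U_\alpha$, so $x_2 U_\alpha x_2^{-1}\neq U_\alpha$, and symmetrically for $x_2^{-1}$. This forces $\varepsilon=0$ in the expression for $a$, giving $a\in U_\alpha$ and proving $N_{\pi_1(E_0,e_0)}(U_\alpha)=U_\alpha$. Alternatively, one can phrase this more structurally: Lemma~\ref{lemma:normalizer}(3) already tells us the set $\{a : aU_\alpha a^{-1}\cap U_\alpha\neq 1\}$ is contained in $U_\alpha\cup U_\alpha x_2 U_\alpha\cup U_\alpha x_2^{-1}U_\alpha$, and since the double cosets $U_\alpha x_2 U_\alpha$ and $U_\alpha x_2^{-1}U_\alpha$ consist of elements conjugating $U_\alpha$ to a proper-intersection (not equal) subgroup, only the coset $U_\alpha$ itself lies in the normalizer.

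The statement for $U_\omega$ is proved identically, using the $U_\omega$-versions in Lemma~\ref{lemma:normalizer}(3): if $a$ normalizes $U_\omega$ then $a=u_1(x_2x_1^{-1})^{\varepsilon}u_2$ with $u_1,u_2\in U_\omega$ and $\varepsilon\in\{-1,0,1\}$, and one checks that $x_2x_1^{-1}$ does not normalize $U_\omega=\langle x_2x_1^{-1}x_2^{-1}x_1x_2^{-1},\,x_2\rangle$ — again by examining the reduced-word normal form for elements of $U_\omega$ (analogous to (\ref{eq:normalform})) and noting that conjugating the generator $x_2\in U_\omega$ by $x_2x_1^{-1}$ yields a reduced word not in that normal form.

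I do not anticipate a serious obstacle here: the whole content is a short deduction from Lemma~\ref{lemma:normalizer}(3), which has already done the hard covering-space bookkeeping. The only mildly delicate point is making sure that an element in the double coset $U_\alpha x_2^{\pm1}U_\alpha$ genuinely fails to normalize $U_\alpha$ rather than merely intersecting it non-trivially; this is handled by the explicit witness $x_1\mapsto x_2x_1x_2^{-1}$ together with the normal form (\ref{eq:normalform}), and it is precisely the kind of routine reduced-word check the paper elsewhere dispatches quickly. One should also note in passing that $U_\alpha$ and $U_\omega$ are self-normalizing \emph{as abstract subgroups}, which is exactly the malnormality-flavored input that the later folding arguments will want.
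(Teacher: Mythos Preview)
Your overall strategy is exactly what the paper intends (the paper gives no argument beyond ``immediately from Lemma~\ref{lemma:normalizer}(3)''), but your explicit witnesses are both incorrect.

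For $U_\alpha$: you claim $x_2x_1x_2^{-1}\notin U_\alpha$, but in fact $x_2x_1x_2^{-1}=(x_2x_1^{-1}x_2^{-1})^{-1}=\alpha(y_1)^{-1}\in U_\alpha$. Your description of the normal form~(\ref{eq:normalform}) is right --- the $x_2$ is on the left and $x_2^{-1}$ on the right --- but $x_2x_1x_2^{-1}$ matches that pattern perfectly, so the argument as written fails. A correct witness is obtained by conjugating in the other direction: $x_2^{-1}x_1x_2$ is a reduced word beginning with $x_2^{-1}$, and no reduced word of the form~(\ref{eq:normalform}) begins with $x_2^{-1}$, so $x_2^{-1}x_1x_2\notin U_\alpha$. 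Hence $x_2^{-1}$ (and therefore $x_2$) does not normalize $U_\alpha$, and your argument then goes through.

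The same slip occurs for $U_\omega$: conjugating $x_2\in U_\omega$ by $x_2x_1^{-1}$ gives
\[
(x_2x_1^{-1})\,x_2\,(x_2x_1^{-1})^{-1}=x_2x_1^{-1}x_2x_1x_2^{-1}=\omega(y_1)^{-1}\in U_\omega,
\]
so again this is not a witness. Use instead $(x_2x_1^{-1})^{-1}x_2(x_2x_1^{-1})=x_1x_2^{-1}x_2x_2x_1^{-1}=x_1x_2x_1^{-1}$, and check via the analogue of~(\ref{eq:normalform}) for $U_\omega$ that this reduced word is not in $U_\omega$.

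Once the witnesses are corrected, the deduction is fine and is precisely the intended one.
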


We will consider  subgroups of $\pi_1(E, e_0)$ that are generated by conjugates of $x_1$ and  $x_2$. We  call such subgroups \emph{meridional}.    We define the meridional rank $\bar{w}(U)$ of a meridional subgroup $U\le \pi_1(E, e_0)$ as the minimal number of conjugates of $x_1$ and $x_2$ needed to generate $U$.

\begin{definition}{\label{def:good}}
A meridional  subgroup $U = \langle g_1x_{i_1} g_1^{-1}, \ldots  , g_lx_{i_l}g_l^{-1}\rangle \le \pi_1(E, e_0)$ with $l=\bar{w}({U})$ is \emph{good} if there is a partitioned $J_1, \ldots , J_d$ of $\{1, \ldots  , l\}$ such that:
\begin{enumerate}
\item $U=U_1\ast \ldots \ast U_d$ where 
 $U_s$ ($1\le s\le l$) is generated by $\{g_k x_{i_k} g_k^{-1} \ | \  k\in J_s\}$.
\item for any $h\in \pi_1(E, e_0)$ one of the following holds:
\begin{enumerate}
\item $U \cap hC_Vh^{-1}=1$.

\item $U \cap  hC_Vh^{-1}=\langle h m_Vh^{-1}\rangle$ and there are   $1\le s\le d$  with  $|J_s|\ge 2$   and $u\in U$ such that $hm_Vh^{-1}$ lies in $ u  U _s u^{-1}$. 
\end{enumerate}  
\end{enumerate}
\end{definition}
 
The main result of this section  says that meridional subgroups are almost good in the sense that they are contained in a larger good subgroup whose rank does not increase.
\begin{proposition}{\label{prop:01}}
Any  meridional subgroup $ U \le\pi_1(E, e_0)$ is contained in a good meridional subgroup $\bar{U}\le\pi_1(E, e_0)$ with $\bar{w}(\bar{U})\le \bar{w}(U)$.
\end{proposition}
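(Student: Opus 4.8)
The plan is to induct on the rank $\bar w(U)$ and analyze a minimal generating set of $U$ by conjugates of $x_1$ and $x_2$ through the HNN structure $\pi_1(E,e_0)=\pi_1(E_0,e_0)\ast_{(\alpha,\pi_1(\Sigma,\sigma_0),\omega)}$. Since $x_1$ and $x_2$ both lie in the vertex group $\pi_1(E_0,e_0)$, the subgroup generated by a finite set of their conjugates acts on the Bass--Serre tree $T$ of this HNN extension, and by the structure theory of groups acting on trees (or the folding machinery of \cite{RW} that the paper relies on) $U$ is itself the fundamental group of a graph of groups whose vertex groups are conjugates of subgroups of $\pi_1(E_0,e_0)$ generated by conjugates of $x_1,x_2$ and whose edge groups are conjugates of subgroups of $\pi_1(\Sigma,\sigma_0)$. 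The first step is therefore to set up this decomposition carefully and to arrange, by the usual Stallings/Dunwoody-style accessibility argument combined with the length assumption technique already used in the proof of Lemma~\ref{lemma:normalizer}, that the decomposition is in reduced form: no edge group equals an incident vertex group, consistent with Corollary~\ref{cor:normalizer} which rules out the degenerate possibility $aU_\alpha a^{-1}=U_\alpha$ unless $a\in U_\alpha$.

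The second step is to pass to the ``good'' enlargement $\bar U$. I would build $\bar U$ by replacing each vertex group of the graph-of-groups decomposition of $U$ with a slightly larger free factor and, where two edges in the decomposition carry conjugates of $\pi_1(\Sigma,\sigma_0)$ that can be amalgamated into a single $C_V$-peripheral piece, fusing them; the net effect is to trade a rank-$k$ vertex contribution for a rank-$\le k$ free product of pieces each of which is either a single conjugate $g x_i g^{-1}$ or a peripheral piece containing some $hm_Vh^{-1}$ in its interior. Concretely, the key computational input is Lemma~\ref{lemma:normalizer}: parts (1) and (2) together with Remark~\ref{rem:intersection} pin down exactly when a conjugate of $\langle m_V\rangle$ or of $\langle x_1\rangle$ meets $U_\alpha$ (resp.\ $U_\omega$), and part (3) identifies the normalizer. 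Using these, each time an edge group $g\pi_1(\Sigma,\sigma_0)g^{-1}$ is incident to a vertex group I can decide whether its image degenerates into $\langle g m_V g^{-1}\rangle$ or injects; in the injective case the Kurosh-type argument forces the vertex group to split off the corresponding conjugate of $x_i$ as a free factor, which is exactly clause (1) of Definition~\ref{def:good} with that index in a singleton $J_s$. The peripheral condition, clause (2), is then checked by intersecting $U$ with an arbitrary conjugate $hC_Vh^{-1}$: because $C_V=\langle m_V,l_V\rangle$ and $l_V$ is the stable letter of the HNN extension, $hC_Vh^{-1}\cap U$ is governed by how the axis or fixed point of $h m_V h^{-1}$ sits in $T$, and Lemma~\ref{lemma:normalizer}(1) forces the intersection to be either trivial or $\langle hm_Vh^{-1}\rangle$, with the latter only when $hm_Vh^{-1}$ lies inside one of the non-singleton pieces $U_s$.

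The third step is the bookkeeping that $\bar w(\bar U)\le\bar w(U)$: each enlargement either leaves the rank unchanged (splitting off a free factor that was already a conjugate of a generator) or replaces two generators by a single peripheral piece that is still generated by two conjugates of $x_1,x_2$ and is a genuine free product of two such — here I would use Lemma~\ref{C1}-style counting, adapted to the pattern space, to see that the peripheral pieces of the form $\langle g_jx_{i_j}g_j^{-1}, g_kx_{i_k}g_k^{-1}\rangle$ arising as $hC_Vh^{-1}$-stabilizers really do need two conjugate generators, so no rank is wasted. Finally one assembles $\bar U=U_1\ast\cdots\ast U_d$ from the vertex data, verifies it contains $U$ (it is built by only adding elements), and checks goodness clause by clause from the constructions above.

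The main obstacle I expect is the reduced-form/accessibility step: ensuring that the graph-of-groups decomposition of $U$ inherited from the tree action can be taken in a form where every edge group is ``used'' nontrivially and where the degenerate cases (an edge group conjugate into a vertex group on both sides, or a conjugate of $\pi_1(\Sigma,\sigma_0)$ that is carried by a vertex group rather than an edge) are eliminated. This is precisely where the folding technology of \cite{RW} and the length-minimization trick from the proof of Lemma~\ref{lemma:normalizer} must be deployed in tandem, and getting the interaction between the two boundary monomorphisms $\alpha$ and $\omega$ right — they have different images $U_\alpha\neq U_\omega$, so the ``both sides'' degeneration is genuinely two-sided — is the delicate point. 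Once that normal form is in hand, Lemma~\ref{lemma:normalizer} does essentially all the remaining work.
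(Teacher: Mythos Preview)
Your outline points at the right tools—the HNN splitting, Bass--Serre tree, folding in the sense of \cite{RW}, and Lemma~\ref{lemma:normalizer} as the local input—and this is indeed the approach the paper takes. But what you have written is a strategy, not a proof, and the missing content is exactly the hard part.

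The paper does \emph{not} first find a graph-of-groups decomposition of $U$ and then enlarge vertex groups, as you propose. Instead it introduces a restricted class of ``good'' $\mathbb A$-graphs, whose essential pieces are required to be of an explicitly described \emph{cyclic} or \emph{non-cyclic} type (the non-cyclic pieces model a chain of copies of $E_0$ glued along full copies of $\Sigma$, with cyclic tails attached). It then equips good $\mathbb A$-graphs with a four-component complexity $(rank,\,|EB|,\,|EB|-e_{full},\,|EB|-|E_{\mathcal B}|)$ and chooses, among all good $\mathbb A$-graphs $\mathcal B$ with $U\le U(\mathcal B,u_0)$, one of minimal complexity. The bulk of the proof is an exhaustive case analysis showing that if $\mathcal B$ is not folded then one can produce a good $\mathbb A$-graph of strictly smaller complexity still containing $U$; hence the minimal one is folded, and a separate lemma shows that any folded good $\mathbb A$-graph represents a good subgroup. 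The enlargement $U\mapsto\bar U$ is thus not a single controlled step but the cumulative effect of many moves (sometimes replacing a cyclic vertex group by all of $A_v$, sometimes collapsing an essential piece and refolding), and the complexity function is what guarantees termination and the rank bound simultaneously.

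Your phrase ``replace each vertex group by a slightly larger free factor'' is where the gap lies: there is no canonical such replacement, and the correct one depends on the local configuration (whether the vertex is cyclic, almost full, or full; whether the adjacent edge has type $e$ or $e^{-1}$; whether the neighbouring piece is cyclic or non-cyclic). Lemma~\ref{lemma:normalizer} tells you \emph{when} an edge group can be pushed across, but it does not by itself tell you how to reorganise the resulting $\mathbb A$-graph back into one of the allowed shapes without increasing rank—that is precisely the case analysis you have not supplied. Your appeal to ``Lemma~\ref{C1}-style counting, adapted to the pattern space'' is also a gap: no such lemma exists for the Whitehead pattern in the paper, and the verification that $\bar w(\bar U)=rank(\mathcal B)$ and that condition~(2) of Definition~\ref{def:good} holds is done directly from the structure of good $\mathbb A$-graphs (via the manifold $X_{\mathcal B_0}$ and a homology argument), not by a braid-space analogue.
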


 
\subsection{Proof of Proposition~\ref{prop:01}} We consider the HNN extension $$\pi_1(E, e_0)=\pi_1(E_0, e_0)\ast_{(\alpha, \pi_1(\Sigma, \sigma_0), \omega)}$$  as a graph of groups $\mathbb A$ having a single vertex $v$ with group  $A_v=\pi_1(E_0, e_0)$ and a single edge pair $e, e^{-1}$ with $\alpha(e)=\omega(e)=v$ and group $A_e=A_{e^{-1}}=\pi_1(\Sigma, \sigma_0)$. The boundary monorphisms $\alpha_e, \omega_e:A_e\rightarrow A_v$  are given by $\alpha_e=\alpha$ and $\omega_e=\omega$.  
\begin{figure}[h!] 
\begin{center}
\includegraphics[scale=1]{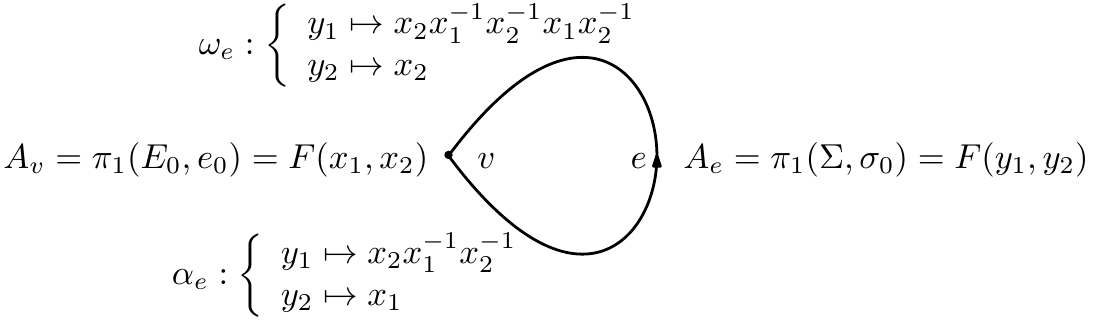}
\caption{The graph of groups $\mathbb A$.}{\label{fig:graphofgroupsWP}}
\end{center}
\end{figure}

The idea of the proof is to look at $\mathbb A$-graphs that  possibly represent good meridional subgroups of $\pi_1(E, e_0)\cong \pi_1(\mathbb A, v)$. We star by defining $\mathbb A$-graphs of  cyclic and non-cyclic type which will serve as the building blocks of the $\mathbb A$-graph we are going to consider.

\smallskip
 
We say that a folded $\mathbb A$-graph $\mathcal B$ is of \emph{cyclic type} if the following hold:
\begin{enumerate}
\item the underlying graph of $\mathcal B$ is an interval $f_1, \ldots , f_d$ of length $d\geq 0$.
 \begin{figure}[h!] 
\begin{center}
\includegraphics[scale=1]{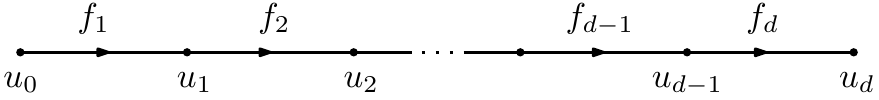}
\caption{The underlying graph of an $\mathbb A$-graph of cyclic type.}{\label{fig:graphcyclic}}
\end{center}
\end{figure}
\item for each $u\in VB=\{u_0, \ldots , u_d\}$ the corresponding  vertex group $B_u\leq A_v$ is generated by a single conjugate of $x_1$ or $x_2$.

\item for each $f\in EB=\{f_1^{\pm1 }, \ldots  , f_d^{\pm 1}\}$ the corresponding edge group $B_f\leq A_e$ is generated by a single conjugate of $y_1$ or $y_2$.
\end{enumerate} 

\smallskip
 
We say that a folded $\mathbb A$-graph  $\mathcal B$ is of \emph{non-cyclic type} if the following hold:
\begin{enumerate}
\item  the underlying graph $B$ of $\mathcal B$ contains an interval  $f_1, \ldots , f_d$ of length  $d\geq 2$ with $u_0=\alpha(f_1)$ and $u_i=\omega(f_i)$ for $i=1, \ldots , d$ such that $$B \setminus \{f_1^{\pm 1}, \ldots  , f_d^{\pm 1}\} = l_1\cup   l_2 \cup  l_3 \cup l_4$$ 
where $l_k$ ($1\le k\le 4$) is an interval  of length $d_k\ge 0$  with  $u_0=\alpha(l_1)=\alpha(l_2)$   and $u_d= \alpha(l_3)=\alpha(l_4)$, see Figure~\ref{fig:graphnoncyclic}.
\begin{figure}[h!] 
\begin{center}
\includegraphics[scale=1]{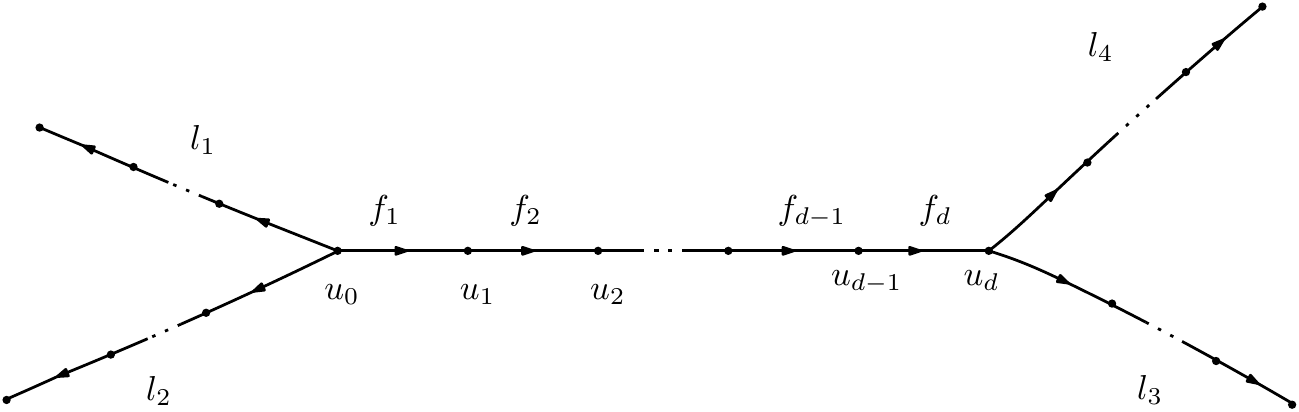}
\caption{The underlying graph of an $\mathbb A$-graph of non-cyclic type.}{\label{fig:graphnoncyclic}}
\end{center}
\end{figure}

\item for each $i\in \{1, 2, 3, 4\}$ the sub-$\mathbb A$-graph $\mathcal B_i$ of $\mathcal B$ that is carried  by the interval  $l_i$ is an $\mathbb A$-graph  of cyclic type. 

\item for each $1\le i\le d$, $B_{f_i}=A_{e_i}=A_e=\langle y_1, y_2\rangle$  where $e_i:=[f_i]\in EA$.

\item for each $1\le i\le d-1$,   $B_{u_i}=A_v=\langle x_1, x_2\rangle$. 
 
\item   the vertex groups at $u_0=\alpha(f_1)$ and $u_d=\omega(f_d)$ are given by: 
$$B_{u_0} = o_{f_1}\alpha_{e_1}(A_{e_1})o_{f_1}^{-1} =  o_{f_1} \langle \alpha_{e_1}(y_1) , \alpha_{e_1}(y_2)\rangle o_{f_1}^{-1}\le A_{v_0}$$  and $$B_{u_d}=t_{f_d}^{-1} \omega_{e_d}(A_{e_d}) t_{f_d} = t_{f_d}^{-1}\langle \omega_{e_d}(y_1), \omega_{e_d}(y_2)\rangle t_{f_d}\le A_{v_0}.$$
\end{enumerate}

We say that a connected sub-$\mathbb A$-graph $\mathcal B'$  of $\mathcal B$ is \emph{non-degenerate} if $\mathcal B'$  contains at least one vertex with $B_u=A_v$, or in other words,  if $\mathcal B'$  contains at least one of the vertices $u_1, \ldots , u_{d-1}$.

\smallskip
 
 Let $\mathcal B$ be an $\mathbb A$-graph of  cyclic or non-cyclic  type. It follows   from the definition  combined with \cite[Proposition~2.4]{KMW}  that the subgroup $U:=U(\mathcal B, u_0)\leq \pi_1(\mathbb A, v)$ represented by $\mathcal B$ is  meridional.  If $\mathcal B$ is of cyclic type,   then we further see that  $U(\mathcal B, u_0)$ is generated by a single conjugate of $x_1$ or $x_2$; hence $w(U)=1$. If $\mathcal B$ is  of non-cyclic type,   then is not hard to see that the sub-$\mathbb A$-graph $\mathcal B_0$  carried by $f_1, \ldots , f_d$ carries the fundamental group of $\mathcal B$, i.e.~the natural inclusion of $\mathcal B_0$ into $\mathcal B$  induces an isomorphism $\pi_1(\mathbb B_0, u_0)\to \pi_1(\mathbb B, u_0)$; hence  $U=U(\mathcal B_0, u_0)$. Moreover, to  $\mathcal B_0$ we associate a  manifold $X_{\mathcal B_0}$  whose fundamental group is canonically isomorphic to $\pi_1(\mathbb B_0, u_0)$ as shown in Figure~\ref{fig:essentialpiece1}.  
\begin{figure}[h!] 
\begin{center}
\includegraphics[scale=1]{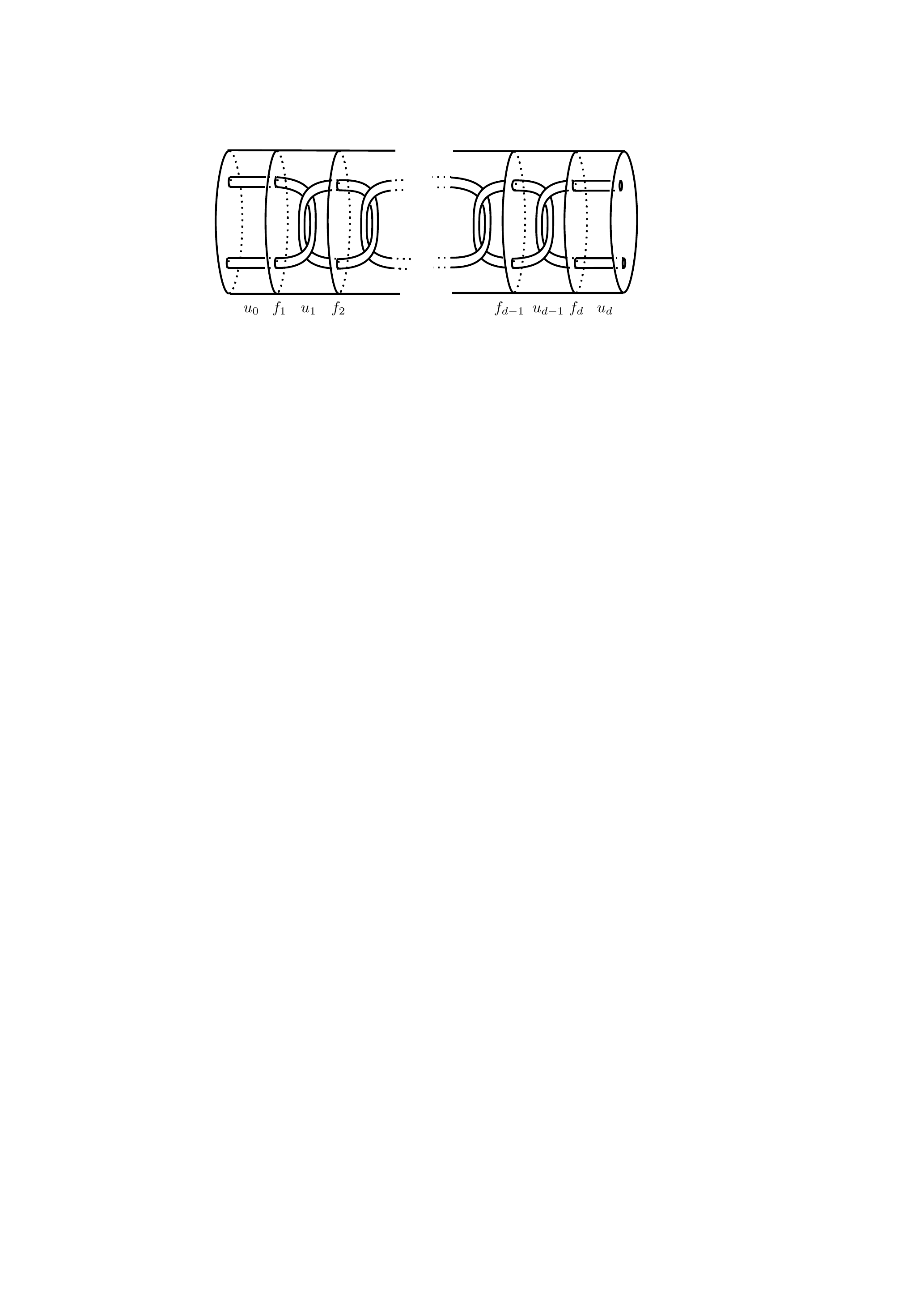}
\caption{The manifold $X_{\mathcal B_0}$.}{\label{fig:essentialpiece1}}
\end{center}
\end{figure}
Since $\mathcal B$ and therefore also $\mathcal B_0$ are folded  we conclude from \cite[Proposition~6]{RW} that $U=U(\mathcal B_0, u_0)$ is isomorphic to $\pi_1(\mathbb B, u_0)$ and hence  isomorphic to $ \pi_1(\mathbb B_0, u_0)\cong \pi_1(X_{\mathcal B_0})$. Thus $U$ is generated by $d$ conjugates of $x_1$ and $x_2$. Since the first homology group of $X_{\mathcal B_0}$ is isomorphic to $\mathbb Z^d$ we can conclude that   $w(U)=rank(U)=rank(\pi_1(X_{\mathcal B_0}))=d$.

\medskip

Let $\mathcal B$ be an arbitrary $\mathbb A$-graph. We define $\text{Ess}(\mathcal B)$ as the $\mathbb A$-graph that is obtained from $\mathcal B$ by removing all vertices and edges with non-trivial group.  Any component of $\text{Ess}(\mathcal B)$ is called an \emph{essential piece of $\mathcal B$}.
\begin{definition}
We say that an $\mathbb A$-graph $\mathcal B$ is \emph{good}  if the following hold:
\begin{enumerate}
\item[(G1)] the underlying graph of $\mathcal B$ is a finite tree. 

\item[(G2)] if $\mathcal B'$ is an essential piece of $\mathcal B$,  then  $\mathcal B'$  is  either an $\mathbb A$-graph of cyclic type (in this case we say that $\mathcal B'$ is a \emph{cyclic essential piece of $\mathcal B$}) or $\mathcal B'$ is a non-degenerate sub-$\mathbb A$-graph of  some $\mathbb A$-graph of non-cyclic type (in this case we say that $\mathcal B'$ is a \emph{non-cyclic essential piece of $\mathcal B$}).
\end{enumerate}
\end{definition}

The set of vertices and edges with non-trivial group  in a good $\mathbb A$-graph can be decomposed as follows. Let $\mathcal B$ be a good $\mathbb A$-graph. We define
$$V_\mathcal B:=\{u\in VB  \ | \ B_u\neq 1\} \  \ \text{ and } \ \ E_\mathcal{B}:=\{f\in EB \ | \ B_f\neq 1\}.$$
The set $V_\mathcal B$ is partitioned into three types of vertices, namely,  the set of: 
\begin{enumerate}
\item  \emph{cyclic vertices}  which consists of all vertices $u\in VB$  such that $B_u$ is generated by a single conjugate of $x_1$ or $x_2$.  

\item   \emph{almost full vertices} which consists of all vertices $u\in VB$ for which  there is  an edge $f\in Star_B(u)$ such that $B_f= \langle y_1, y_2\rangle$ and $$B_u = \langle o_f\alpha_{e'}(y_1)o_f^{-1} ,  o_f\alpha_{e'}(y_2)o_f^{-1}\rangle $$ where $e'=[f]\in EA$.  Observe that $B_{\omega(f)}=A_v$. 

\item  \emph{full vertices} which consists of all vertices $u$ such that $B_u=A_v=\langle x_1, x_2\rangle$.  
\end{enumerate}
Similarly, $E_\mathcal B$ is partitioned  into  two types of edges, namely,  the set of:

\begin{enumerate}
\item \emph{cyclic edges} which consists of all  edges $f\in EB$ such that $B_f$ is generated by a single conjugate of $y_1$ or $y_2$.

\item \emph{full edges} which consists of all  edges $f\in EB$ such that $B_f= \langle y_1, y_2\rangle$. 
\end{enumerate}
  
\smallskip

\begin{lemma}{\label{lemma:folds}}
Let $\mathcal B$ be a good $\mathbb A$-graph and let  $u \in VB$  and $f\in Star_B(u)$.  Assume that  $e'=[f]\in EA=\{e, e^{-1}\}$.  Then the following hold:
\begin{enumerate}
\item If $B_u$ is generated by a single conjugate of $x_1$ (in particular $u$ is cyclic) and $$o_f \alpha_{e'}(A_{e'}) o_f^{-1}\cap B_u \neq 1$$  then $e'=e$ and  up to auxiliary moves we can assume that  $B_u=\langle x_1\rangle$ and  $o_{f}= x_2^{-\varepsilon}$ with $\varepsilon \in \{0, 1\}$. 

\noindent Similarly, if $B_u$ is generated by a single conjugate of $x_2$ and 
$$o_f \alpha_{e'}(A_{e'}) o_f^{-1}\cap B_u \neq 1$$ then $e'=e^{-1}$ and up to auxiliary moves we can assume that $B_u=\langle x_2\rangle$ and $o_f= (x_2x_1^{-1})^{-\varepsilon }$ with $\varepsilon \in \{0,1\}$. 
 
\item If $B_u\neq A_v $ is generated by two conjugates of $x_1$ (in particular $u$ is almost full)  and 
$$ B_u \cap o_f\alpha_{e'}(A_{e'})o_f^{-1}\neq 1$$
then $e'=e$ and up to auxiliary moves $o_f=  x_2^{\varepsilon} $ with $\varepsilon \in \{-1, 0, 1\}$. 

\noindent Similarly, if $B_u\neq A_v$ is generated by two conjugates of $x_2$ (in particular $u$ is almost full) and 
$$ B_u \cap o_f\alpha_{e'}(A_{e'})o_f^{-1}\neq 1$$
then $e'=e^{-1}$ and up to auxiliary moves $o_f=  (x_2x_1^{-1})^{\eta} $ with $\eta \in \{-1, 0, 1\}$.
\end{enumerate} 
\end{lemma}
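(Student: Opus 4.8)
The plan is to reduce everything to statements about the single vertex group $A_v = \pi_1(E_0,e_0) = F(x_1,x_2)$ and the two subgroups $U_\alpha = \alpha_e(A_e)$ and $U_\omega = \omega_e(A_e)$, and then apply Lemma~\ref{lemma:normalizer}. First I would recall what the ``auxiliary moves'' do to an $\mathbb A$-graph: conjugating the vertex group $B_u$ together with simultaneously adjusting the edge data $o_f$ (and, for the other end, $t_f$) by the same element of $A_v$ leaves the represented subgroup unchanged. So given $f\in Star_B(u)$ with $e'=[f]\in\{e,e^{-1}\}$, after such a move I may assume $B_u = \langle x_1\rangle$ in case (1) (resp.\ $B_u=\langle x_2\rangle$), and the quantity to analyze is the intersection $o_f\,\alpha_{e'}(A_{e'})\,o_f^{-1}\cap B_u$, where $\alpha_{e}(A_e)=U_\alpha = \langle x_1,\ x_2x_1^{-1}x_2^{-1}\rangle$ and $\alpha_{e^{-1}}(A_{e^{-1}}) = \omega_e(A_e) = U_\omega = \langle x_2,\ x_2x_1^{-1}x_2^{-1}x_1x_2^{-1}\rangle$ (reading off the boundary monomorphisms $\alpha,\omega$ from Section~3).

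For case (1): the hypothesis $o_f\,\alpha_{e'}(A_{e'})\,o_f^{-1}\cap \langle x_1\rangle\neq 1$ rewrites, after conjugating by $o_f^{-1}$, as $\alpha_{e'}(A_{e'})\cap o_f^{-1}\langle x_1\rangle o_f\neq 1$, i.e.\ $a\langle x_1\rangle a^{-1}\cap \alpha_{e'}(A_{e'})\neq 1$ with $a:=o_f^{-1}$. If $e'=e$, this is exactly the situation of Lemma~\ref{lemma:normalizer}(2), which gives $a = u\,x_2^\varepsilon x_1^k$ with $u\in U_\alpha$, $\varepsilon\in\{0,1\}$, $k\in\mathbb Z$; the factors $u$ and $x_1^k$ can be absorbed by auxiliary moves (they normalize $\langle x_1\rangle$ on the right and lie in $U_\alpha$ on the left, hence change neither $B_u$ nor the edge group beyond a harmless conjugation), leaving $o_f = x_2^{-\varepsilon}$. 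If instead $e'=e^{-1}$, one must show the intersection $a\langle x_1\rangle a^{-1}\cap U_\omega$ is always trivial: here I would run the same covering-space argument as in the proof of Lemma~\ref{lemma:normalizer}, computing the cover of $R_2$ corresponding to $U_\omega=\langle x_2,\ x_2x_1^{-1}x_2^{-1}x_1x_2^{-1}\rangle$ and checking that no conjugate of $x_1$ lifts to a loop — equivalently, a short homology/abelianization computation in $F_2$ showing $x_1$ is not conjugate into $U_\omega$. This forces $e'=e$, as claimed; the mirror statement for $B_u$ generated by a conjugate of $x_2$ is identical with the roles of $U_\alpha,U_\omega$ and of $x_1,x_2$ (and the base element $x_2x_1^{-1}$) swapped.

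Case (2) is the same mechanism one dimension up. After an auxiliary move I may take $B_u = \langle x_1,\ g x_1 g^{-1}\rangle$ for a suitable $g$; the hypothesis $B_u\cap o_f\alpha_{e'}(A_{e'})o_f^{-1}\neq 1$ again becomes $a\,B_u\,a^{-1}\cap \alpha_{e'}(A_{e'})\neq 1$ with $a=o_f^{-1}$. For $e'=e$ the key point is that any almost-full-but-not-full $B_u$ generated by two conjugates of $x_1$ is, up to conjugacy, a proper free factor-type subgroup comparable to $U_\alpha$; I would use Lemma~\ref{lemma:normalizer}(3) together with Corollary~\ref{cor:normalizer} ($N(U_\alpha)=U_\alpha$) to pin down $a = u_1 x_2^\varepsilon u_2$ with $u_i\in U_\alpha$, $\varepsilon\in\{-1,0,1\}$, then absorb $u_1,u_2$ by auxiliary moves to get $o_f = x_2^{\varepsilon}$. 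Ruling out $e'=e^{-1}$ uses conjugacy separability of $U_\alpha$ and $U_\omega$, Lemma~\ref{lemma:conjsep}: a nontrivial element common to (a conjugate of) a two-generator subgroup of the $U_\alpha$-type and to $U_\omega$ would, after chasing conjugators, produce a nontrivial element of $U_\alpha\cap g U_\omega g^{-1}$, contradicting Lemma~\ref{lemma:conjsep}. The mirror statement for $x_2$ is symmetric.

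The main obstacle I anticipate is bookkeeping the auxiliary moves precisely enough that ``absorb $u,x_1^k$'' is actually legitimate — one has to check that multiplying $o_f$ on the appropriate side by an element of the edge-image subgroup, or by an element normalizing $B_u$, is realized by a composition of the allowed moves on the $\mathbb A$-graph and does not disturb foldedness or the rest of the graph; and, on the other end of the edge $f$, that the simultaneous change to $t_f$ is also admissible. The purely group-theoretic core (the intersection computations) is entirely supplied by Lemma~\ref{lemma:normalizer}, Corollary~\ref{cor:normalizer} and Lemma~\ref{lemma:conjsep} once the two ``$e'=e^{-1}$ is impossible'' sub-claims are dispatched by the covering-space/homology argument sketched above.
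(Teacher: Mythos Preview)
Your plan is correct and matches the paper's proof: both parts reduce to Lemma~\ref{lemma:normalizer} (item~(2) for case~(1), item~(3) for case~(2)) together with Lemma~\ref{lemma:conjsep} to rule out $e'=e^{-1}$, after which the residual $U_\alpha$-factor and $\langle x_1\rangle$-factor are absorbed by auxiliary moves based at $f$ and at $u$. In case~(2) you can be more direct than you indicate: by the very definition of an \emph{almost full} vertex in a good $\mathbb A$-graph, $B_u$ is already a conjugate $o_h\,\alpha_{e''}(A_{e''})\,o_h^{-1}$ of $U_\alpha$ or $U_\omega$, and the hypothesis ``generated by two conjugates of $x_1$'' together with Lemma~\ref{lemma:conjsep} forces the $U_\alpha$ alternative, so after an A0 move you may take $B_u=U_\alpha$ exactly and apply Lemma~\ref{lemma:normalizer}(3) without any discussion of ``free factor-type subgroups comparable to~$U_\alpha$''.
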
 
\begin{proof}
Assume that $B_u=\langle g x_1 g^{-1}\rangle\leq A_v$ with $g\in A_v=\pi_1(E_0, e_0)$. Lemma~\ref{lemma:conjsep} implies that $e'=e$.  Lemma~\ref{lemma:normalizer}(2) implies that $$o_f^{-1} g =  \alpha_e(c) x_2^{\varepsilon} x_1^{k}$$ with $c\in A_e$, $\varepsilon\in \{0,1\}$  and $k\in \mathbb Z$.  Therefore $$o_f= g^{-1} x_1^k x_2^{-\varepsilon} \alpha_e(c).$$
This implies that after applying auxiliary moves to $\mathcal B$ based on $u$ and on $f$  we can assume that  $B_u=\langle x_1\rangle$ and $o_f=x_2^{-\varepsilon}$.   

\smallskip

\noindent (2) After applying an  auxiliary move of type A0 based on $u$   we can assume that $$B_u=\alpha_e(A_e)= \langle \alpha_e(y_1), \alpha_e(y_2)\rangle =  \langle  x_2x_1^{-1} x_2^{-1}, x_1\rangle\leq A_v.$$ 
By hypothesis $o_f \alpha_{e'}(A_{e'})o_f^{-1}\cap B_u\neq1$. Thus, by Lemma~\ref{lemma:conjsep}, it holds $e'=e$ and by Lemma~\ref{lemma:normalizer}(3) it holds $$o_f= u x_2^{\varepsilon } \alpha_e(c)$$ with $b\in B_u$ and $c\in A_e$. Therefore,  after applying  auxiliary moves  based on $u$ and of $f$, we can assume that $o_f=x_2^{\varepsilon}$.\end{proof}

\noindent\textbf{Collapsing essential pieces.} In addition to auxiliary moves as defined in~\cite[subsection~1.5]{RW}  we will also need another prepossessing move which is the inverse of a type IIA fold.

Let $\mathcal B$ be a good $\mathbb A$-graph. Let $\mathcal B'$  be an essential piece of $\mathcal B$  such that $\mathcal B'$ contains at least one edge (and hence $\mathcal B$ contains at least one edge with non-trivial group) and let  $u\in VB'$. Choose $u' \in VB'\setminus\{u\}$ such that: 
\begin{enumerate} 
\item[(i)] $val_{B' }(u')=1$ (in a finite tree there are at least two vertices with this property).

\item[(ii)] if $u''\in VB'\setminus\{u\}$ with  $val_{B'}{(u'')}=1$ then,  up to conjugacy,   $B_{u'}$ is a subgroup of $ B_{u''}$. 
\end{enumerate}
Let $f'\in EB'$ be the edge starting at $u'$. Denote  $u'':=\omega(f)\in VB'$ and $e'=[f']\in EA$.  We define a new $\mathbb A$-graph $\bar{\mathcal B}$ as follows: 
\begin{enumerate}
\item[1.] if $u'$ is not  full,  then  from  the definition  of $\mathbb A$-graphs of  cyclic and non-cyclic type  we see that 
 $B_{u'}=o_{f'} \alpha_{e'}(B_{f'})o_{f'}^{-1}$ .  In this case we replace the vertex group  $B_{u'}$ by $\bar{B}_{u'}=1$ and the edge group $B_{f'}\leq A_{e'}$ by $\bar{B}_{f'}=1$, see Figure~\ref{fig:collapse2}. 
\begin{figure}[h!] 
\begin{center}
\includegraphics[scale=1]{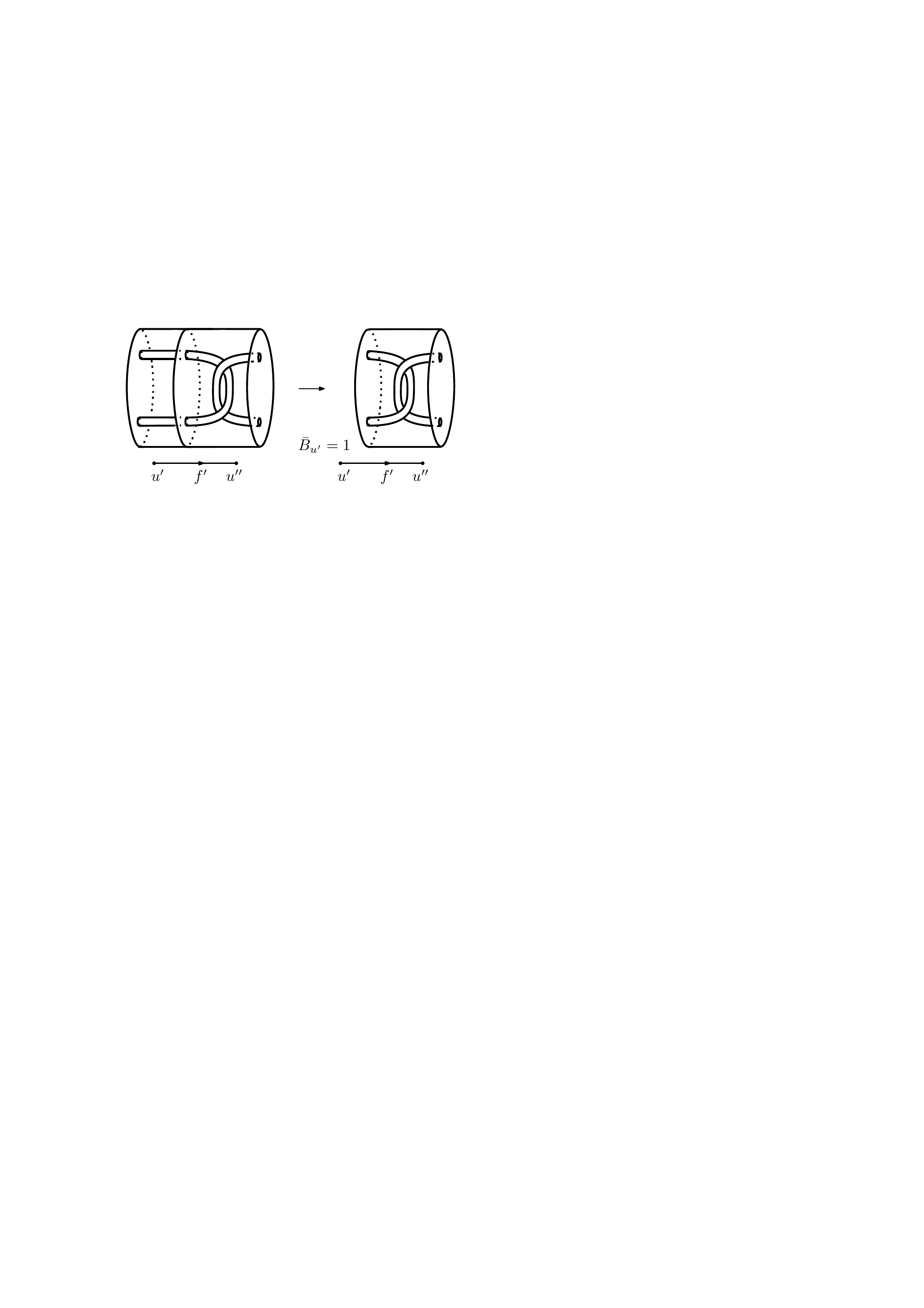}
\caption{Case 1: $u'$ is not full.}{\label{fig:collapse2}}
\end{center}
\end{figure} 

\item[2.]   if $u'$ is full,  then there are two cases:
\begin{enumerate}
\item[(a)] $u''$ is full or $u''$ is almost full and equal to $u$.
Observe that if the second option occurs,  then the choice of $u'$ guarantees that  $\mathcal B'$ contains only the edge $f'$.   As $u'$ is full we have
\begin{eqnarray}
B_u=A_v  & = &  \langle x_1, x_2\rangle   \nonumber\\
& = &  o_{f'} \langle x_1 ,  x_2 \rangle o_{f'}^{-1}\nonumber\\
& = &  \langle o_{f'}\alpha_{e'}(y_2)o_{f'}^{-1}, o_{f'} x_i o_{f'}^{-1} \rangle\nonumber
\end{eqnarray}   
where $i\in \{1, 2\}$ depends on  $e':=[f']$. In this case we do the following:
\begin{enumerate}
\item  replace the edge group $B_{f'}=A_{e'}=\langle y_1, y_2\rangle$ by $\bar{B}_{f'}=1$.

\item replace the vertex group  $B_{u'}=A_v$ by  $\bar{B}_{u'}= \langle o_{f'} x_i o_{f'}^{-1}\rangle$.

\item  If $u''$ is full then we do not change the vertex group at $u''$, i.e.~$\bar{B}_{u''}=B_{u''}$,   and if $u''$ is almost full then we replace  $$B_{u''}=\langle t_{f'}^{-1} \omega_{e'}(y_1) t_{f'} ,  t_{f'}^{-1} \omega_{e'}(y_2) t_{f'}\rangle$$ by $\bar{B}_{u''}=\langle t_{f'}^{-1} \omega_{e'}(y_2) t_{f'}\rangle$,  see Figure~\ref{fig:collapse1}.   
\begin{figure}[h!] 
\begin{center}
\includegraphics[scale=1]{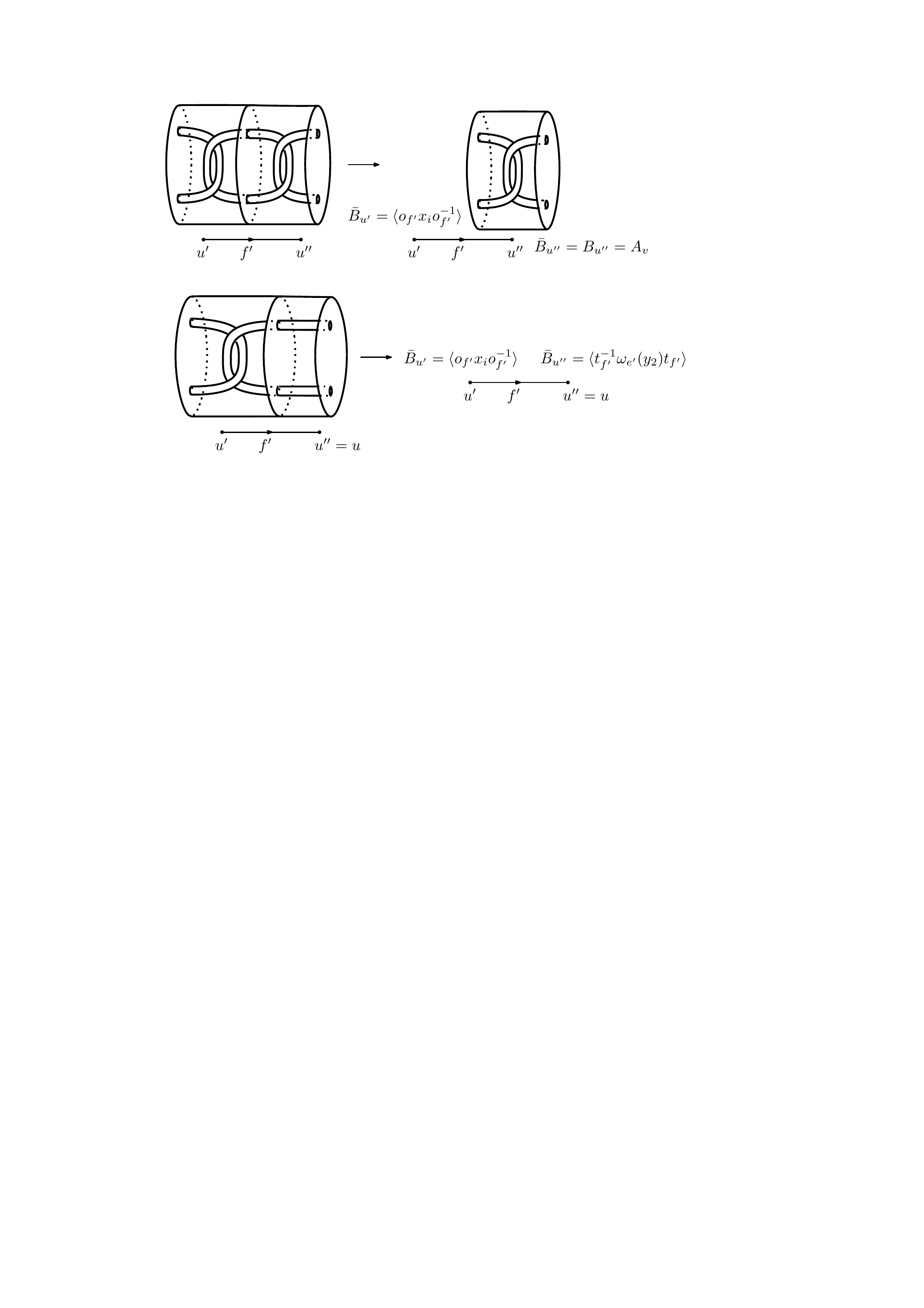}
\caption{Case (2.a): $u''$ is full or $u''$ is almost full and  $u''=u$.}{\label{fig:collapse1}}
\end{center}
\end{figure}
\end{enumerate}

\item[(b)] $u''$ is not full and distinct from $u$. The choice of $u'$ implies that $u$ is cyclic and $val_{B'}(u'')=2$. Let $f''$ be the edge in $Star_{B'}(u'')$ distinct from $(f')^{-1}$ and $e'':=[f'']$. The previous lemma implies that there is $h\in B_{u'}=A_v$ and $i,j\in \{1, 2\}$ such that 
$$B_{u'}=A_v=\langle o_{f'}\alpha_{e'}(y_j)o_{f'}^{-1} , hx_ih^{-1} \rangle$$  and $$ o_{f''}\alpha_{e''}(B_{f''}) o_{f''}^{-1} = \langle t_{f'}^{-1} \omega_{e'}(y_j) t_{f'}\rangle, $$
see Figure~\ref{fig:collapse3}. In this case we replace the edge group  $B_{f'}=A_e=\langle y_1, y_2\rangle$ by $\bar{B}_{f'}=1$ and  the vertex group   $B_{u'}$ by  $\bar{B}_{u'}= \langle hx_ih ^{-1}\rangle$.
\begin{figure}[h!] 
\begin{center}
\includegraphics[scale=1]{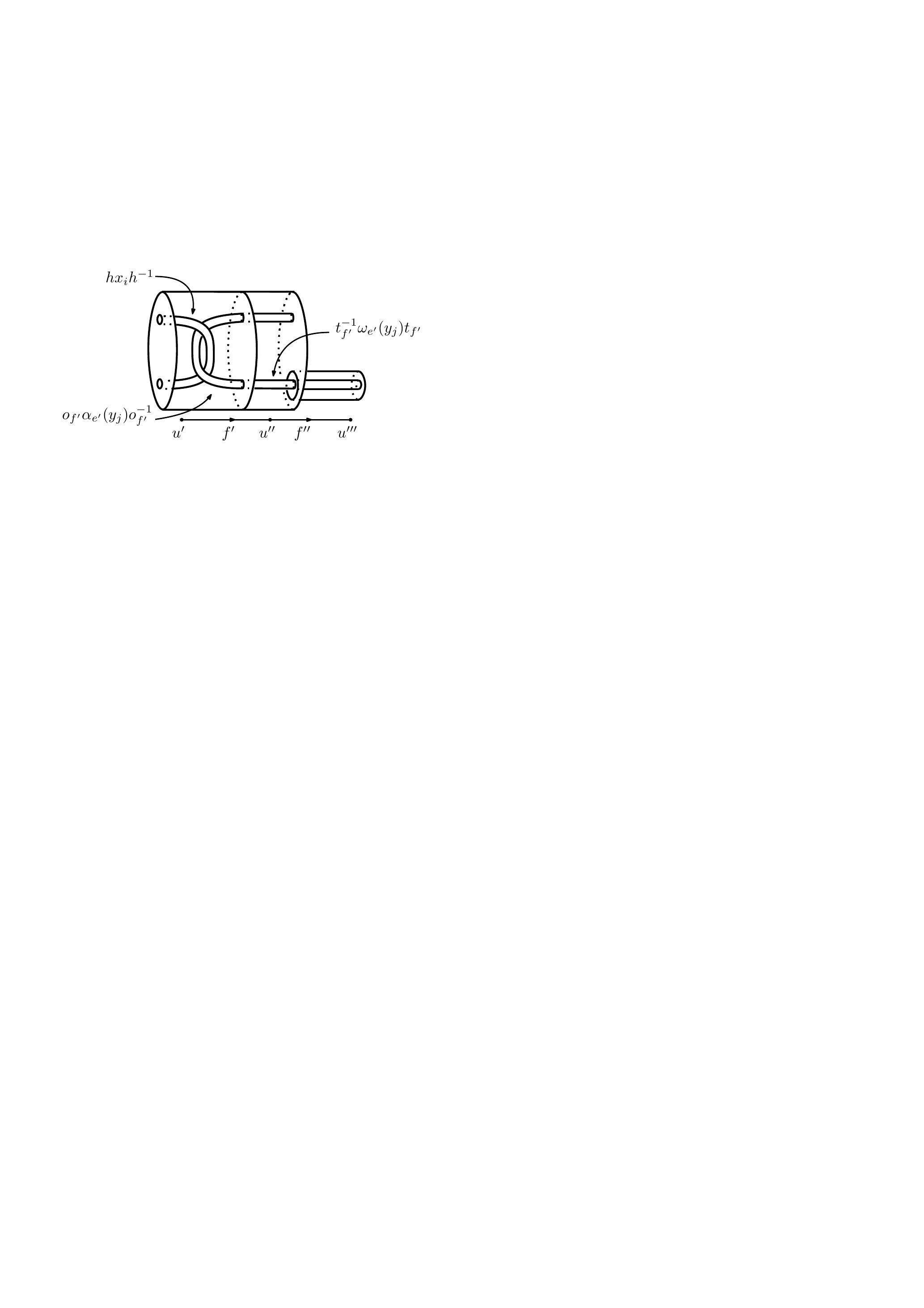}
\caption{Case (2.b):  $u''$ is not full and $u''\neq u$}{\label{fig:collapse3}}
\end{center}
\end{figure} 
\end{enumerate}
\end{enumerate}

It is not hard to see that  $\bar{\mathcal B}$ is indeed good.  We say that \emph{$\bar{\mathcal B}$ is obtained from $\mathcal B$ by unfolding along the edge $f$}. After repeating this procedure finitely many times we obtain a good $\mathbb A$-graph $\widetilde{\mathcal B}$  in which all edge groups in $\mathcal B'$ are trivial and $\widetilde{B}_u = B_u$ if $u$ is cyclic or full (in $\mathcal B$) and $\widetilde{B}_u$ is  generated by a  conjugate of $x_1$ or $x_2$ if $u$ is almost full (in $\mathcal B$). 
We say that \emph{$\widetilde{\mathcal B}$ is obtained from $\mathcal B$ by collapsing $\mathcal B'$ into $u$}.  It  is  clear from the definition that the  labels of  edges are not affected in these moves.  
   
\smallskip
 
Let $\mathcal B$ be a good $\mathbb A$-graph. The \emph{rank of $\mathcal B$}, denoted $rank(\mathcal B)$,   is defined as the rank of $\pi_1(\mathbb B, u_0)$. The number of full vertices (resp.~full edges)  in $\mathcal B$ is denoted by   $v_{full}(\mathcal B)$  (resp.~$e_{full}(\mathcal B)$). 
\begin{lemma}{\label{lemma:rank}}
Let $\mathcal B$ be a good $\mathbb A$-graph with base vertex  $u_0\in VB$. Let $\mathcal B_1, \ldots , \mathcal B_r$ (resp.~$\mathcal B_{r+1}, \ldots , \mathcal B_l$)  be the   cyclic (resp~non-cyclic) essential pieces  of $\mathcal B$.  Then
$$rank(\mathcal B)=  r + \sum_{i=r+1}^l (v_{full}(\mathcal B_i)+1)  $$
and the subgroup  $U(\mathcal B, u_0)\leq \pi_1(\mathbb A, v)=\pi_1(E, e_0)$ represented by $\mathcal B$ is generated by $rank(\mathcal B)$ conjugates of $x_1$ and $x_2$. 

If $\mathcal B$ is folded,  then $U(\mathcal B, u_0)$ is good and  $w(U(\mathcal B, u_0))=rank(\mathcal B)$.
\end{lemma}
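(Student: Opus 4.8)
The plan is to reduce the entire statement to an analysis of the essential pieces of $\mathcal B$, exploiting that the underlying graph is a tree. An edge of $\mathbb B$ with trivial edge group splits $\pi_1(\mathbb B,u_0)$ as the free product of the fundamental groups of the two components it separates, while a vertex with trivial group contributes nothing; since an edge with non-trivial group necessarily has both endpoints in $V_{\mathcal B}$, iterating this over the tree $\mathcal B$ gives a free-product decomposition
$$\pi_1(\mathbb B,u_0)=\pi_1(\mathbb B_1)\ast\cdots\ast\pi_1(\mathbb B_l)$$
over the essential pieces. In $\pi_1(\mathbb A,v)$ the image of $\pi_1(\mathbb B_i)$ is generated by the images of the vertex groups of $\mathcal B_i$, each of which is a single conjugate of $x_1$ or $x_2$ at cyclic and almost full vertices and a copy of $A_v=\langle x_1,x_2\rangle$ at full vertices. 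By Grushko's theorem it therefore suffices to compute $rank(\pi_1(\mathbb B_i))$ for each piece and to record a generating set of the right size.

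For a cyclic piece $\mathcal B_i$ this is immediate: it is folded by definition, so by \cite[Proposition~6]{RW} the natural map $\pi_1(\mathbb B_i)\to\pi_1(\mathbb A,v)$ is injective with image a single conjugate of $x_1$ or $x_2$, hence infinite cyclic, so $rank(\pi_1(\mathbb B_i))=1$. For a non-cyclic piece $\mathcal B_i$ --- a non-degenerate sub-$\mathbb A$-graph of some $\mathbb A$-graph of non-cyclic type --- the four cyclic-type legs do not affect the fundamental group, exactly as in the discussion preceding the lemma, so $\pi_1(\mathbb B_i)$ equals the fundamental group of the portion of the spine contained in $\mathcal B_i$. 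This is an iterated amalgam $A_v\ast_{A_e}A_v\ast_{A_e}\cdots\ast_{A_e}A_v$ over the $v_{full}(\mathcal B_i)$ full vertices of $\mathcal B_i$, possibly flanked by one or two almost full end vertices whose groups are exactly the images of the adjacent edge groups and are therefore absorbed; each successive copy of $A_v$ contributes precisely one new meridional generator, since one of its generators $x_1,x_2$ is identified through $A_e$ (via a relation such as $\omega_e(y_2)=x_2$) with one already present. Hence $\pi_1(\mathbb B_i)$ is generated by $v_{full}(\mathcal B_i)+1$ conjugates of $x_1,x_2$, and since $H_1(X_{\mathcal B_i})\cong\mathbb Z^{\,v_{full}(\mathcal B_i)+1}$ this is also a lower bound for the rank, so $rank(\pi_1(\mathbb B_i))=v_{full}(\mathcal B_i)+1$. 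Summing over the pieces yields the formula for $rank(\mathcal B)$, and concatenating the generating sets proves the second assertion.

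Now suppose $\mathcal B$ is folded. Then \cite[Proposition~6]{RW} gives $\pi_1(\mathbb B,u_0)\hookrightarrow\pi_1(\mathbb A,v)=\pi_1(E,e_0)$, so $U:=U(\mathcal B,u_0)\cong\pi_1(\mathbb B,u_0)$ and $rank(U)=rank(\mathcal B)$; together with the generating set above and the trivial bound $rank(U)\le\bar{w}(U)$ this forces $\bar{w}(U)=rank(\mathcal B)$. For goodness, put $U_s:=$ image of $\pi_1(\mathbb B_s)$ with $J_s$ the corresponding block of generators; by injectivity the free-product decomposition descends to $U=U_1\ast\cdots\ast U_l$, with $|J_s|=1$ exactly for the cyclic pieces and $|J_s|=v_{full}(\mathcal B_s)+1\ge2$ for the non-cyclic ones, so part (1) of Definition~\ref{def:good} holds. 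Part (2) --- the intersection of $U$ with conjugates of $C_V=\langle m_V,l_V\rangle$ --- is the step I expect to be the main obstacle. The crucial observation is that every element of $U=\pi_1(\mathbb B,u_0)$ has exponent sum zero in the stable letter $l_V$ (equivalently, is killed by the homomorphism $\pi_1(E,e_0)\to\mathbb Z$ sending $x_1,x_2\mapsto0$ and $l_V\mapsto1$), because a based loop in the tree $\mathcal B$ traverses each edge a net of zero times and the stable-letter contributions cancel; since $l_V\mapsto\pm1$ while $m_V=\alpha(y_1y_2)\mapsto0$, this already gives $U\cap hC_Vh^{-1}\le h\langle m_V\rangle h^{-1}$ for every $h$. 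To identify the intersection one passes to the Bass--Serre tree of $\mathbb A$: as $\mathcal B$ is folded, every nontrivial subgroup of the form $U\cap gA_vg^{-1}$ is a $U$-conjugate of some vertex group $B_u$, so if $hm_Vh^{-1}\in U$ then, up to conjugation in $U$, it lies in some $B_u$. It cannot lie in a cyclic $B_u$, because $m_V=x_2x_1^{-1}x_2^{-1}x_1$ lies in the commutator subgroup $[\pi_1(E_0,e_0),\pi_1(E_0,e_0)]$ and hence is not conjugate to any nontrivial power of a meridian; thus $B_u$ is $A_v$ or a conjugate of $U_\alpha$ or $U_\omega$, which places $u$ inside a non-cyclic piece $\mathcal B_s$, giving $hm_Vh^{-1}\in uU_su^{-1}$ with $|J_s|\ge2$. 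Finally, that $U\cap hC_Vh^{-1}$ equals all of $h\langle m_V\rangle h^{-1}$ rather than a proper subgroup follows from Remark~\ref{rem:intersection} and the maximality of $\langle m_V\rangle$ as a cyclic subgroup of $A_v$.
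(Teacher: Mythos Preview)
Your proof is correct and follows essentially the same route as the paper's: free-product decomposition over essential pieces via the trivial edge groups, Grushko to reduce to the rank of each piece, the homology computation $H_1(X_{\mathcal B_0})\cong\mathbb Z^{v_{full}+1}$ for non-cyclic pieces, and then for folded $\mathcal B$ the use of \cite[Proposition~6]{RW} together with the observation that any conjugate of a power of $m_V$ lying in $U$ must sit (up to $U$-conjugacy) in some $B_u$ which cannot be cyclic by an abelianisation argument. The only cosmetic differences are that the paper phrases the bound $U\cap hC_Vh^{-1}\le h\langle m_V\rangle h^{-1}$ directly in terms of the tree structure and the $\mathbb A$-path $1,e,1$ representing $l_V$ (your exponent-sum map is the same thing), and that the paper starts from $hm_V^zh^{-1}\in U$ for some $z\neq 0$ and deduces $hm_Vh^{-1}\in U$ afterwards, whereas you jump to $hm_Vh^{-1}$ and patch this at the end via Remark~\ref{rem:intersection}; both are fine.
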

\begin{proof}
For each $i=1, \ldots , l$ let $u_i$ be an arbitrary vertex of $\mathcal B_i$. Let further $p_i$ be a $\mathbb B$-path from $u_0$ to $u_i$ such that  its underlying path is reduced.   Since the group of any edge and the group of any vertex that is not in $\hbox{Ess}(\mathcal B)$ is trivial,  the fundamental group of $\mathbb B$ splits as 
$$\pi_1(\mathbb B , u_0) =  p_1 \pi_1(\mathbb B_1, u_1) p_1^{-1} \ast \ldots   \ast p_l\pi_1(\mathbb B_l, u_l) p_l^{-1},  $$
where $p_i \pi_1(\mathbb B_i, u_i) p_i^{-1}$ consists of all elements of $\pi_1(\mathbb B, u_0)$ represented by paths  of the form $p_i p p_i^{-1}$   where  $p$ is a closed path in $\mathbb B_i$ based at $u_i$.

Grushko's Theorem~\cite{Grushko}  implies that    $rank(\mathcal B)=\sum rank(\mathcal B_j)$.  As observed before the rank of any cyclic piece is   $1$ and the rank of any non-cyclic piece $\mathcal B_{r+i}$ is  $v_{full}(\mathcal B_i)+1$. Therefore we have the right formula for $rank(\mathcal B)$.

Assume now that $\mathcal B$ is folded.   It follows from  \cite[Proposition 6]{RW} that $U(\mathcal B, u_0)$ is isomorphic to $\pi_1(\mathbb B, u_0)$.  From this fact we readily conclude that $w(U(\mathcal B, u_0))=rank(\mathcal B)$ and that  
 $U(\mathcal B, u_0)=    U_1\ast \ldots \ast U_l$ 
where 
$$U_i:=\phi( p_i \pi_1(\mathbb B_i, u_i) p_i^{-1})\cong \pi_1(\mathbb B_i, u_i)   \ \text{ for } \  i=1, \ldots, l.$$ 
This  shows that condition (1) of the definition of good  subgroups of $\pi_1(E, e_0)$ is satisfied.

It remains to show that (2) also holds. Let $h\in \pi_1(E, e_0)=\pi_1(\mathbb A, v)$. Since the underlying graph of $\mathcal B$ is a tree  and $l_V$ is represented by the closed $\mathbb A$-path  $1,e,1$, we conclude that   $$U(\mathcal B, u_0) \cap hC_V h^{-1} \le h\langle m_V\rangle h^{-1}.$$  
Assume that $hm_V^zh^{-1}\in U(\mathcal B, u_0)$ for some $z\neq 0$.  The Normal Form Theorem~\cite[Proposition~2.4]{KMW} combined with the foldedness of  $\mathcal B$  implies that there must be  a vertex $u\in VB$ such that $B_u$ contains  $am_V^za^{-1}$ for some $a\in A_v$.  Since no cyclic subgroup of $A_v$ contains $m_V^z$ it follows  that $u$ is either almost full or full (in particular $u$  lies in a non-cyclic essential piece of $\mathcal B$). Thus $B_u$ also contains  $am_Va^{-1}$. This shows that for any $h\in \pi_1(E, e_0)$  one of conditions (2.a) or (2.b) from the definition of good subgroups holds. \end{proof}

\begin{proof}[proof of Proposition~\ref{prop:01}]  
Let $U=\langle g_1x_{i_1} g_1^{-1} , \ldots  , g_kx_{i_k}g_k^{-1}\rangle$ be a meridional subgroup of $\pi_1(E, e_0)$ of meridional rank $k:=w(U)$. Since $\pi_1(E, e_0)$ splits as $\pi_1(\mathbb A, v)$,   where $\mathbb A$ is the  graph of groups described above, each  $g_s$ ($1\leq s\leq k$)  can be written as $g_s = [p_s]$ where $p_s$ is a (non-necessarily reduced) closed $\mathbb A$-path of positive length. 

We define $\mathcal B_0$ as follows (see figure~\ref{fig:initial}). Start with a vertex  $u_0 $ and for  each $1\leq s\leq k$ we glue an interval $l_s$ subdivided into $length(p_s)$ segments to  $u_0$. The label of each $l_s$  is defined so that $\mu(l_s)=p_s$. The vertex group at $\omega(l_s)$ is $\langle x_{i_s}\rangle$ and the remaining vertex and edge groups are trivial.

Note that   $\mathcal B_0$ is good since all components of $\hbox{Ess}(\mathcal B_0)$ are (degenerate) $\mathbb A$-graphs of   cyclic type. Note further that   $\pi_1(\mathbb B_0, u_0)$ is freely generated by $$[l_1   x_{i_1}  l_1^{-1}], \ldots ,    [l_kx_{i_k}  l_k^{-1}]$$ and,   by definition,    $[l_s x_{i_s} l_s^{-1}]$ is mapped onto $g_s x_{i_s} g_ s^{-1}$. Therefore   $U(\mathcal B_0 , u_0)=U$ and  $k= w(U)=rank(\mathcal B_0)$. 
\begin{figure}[h!] 
\begin{center}
\includegraphics[scale=1]{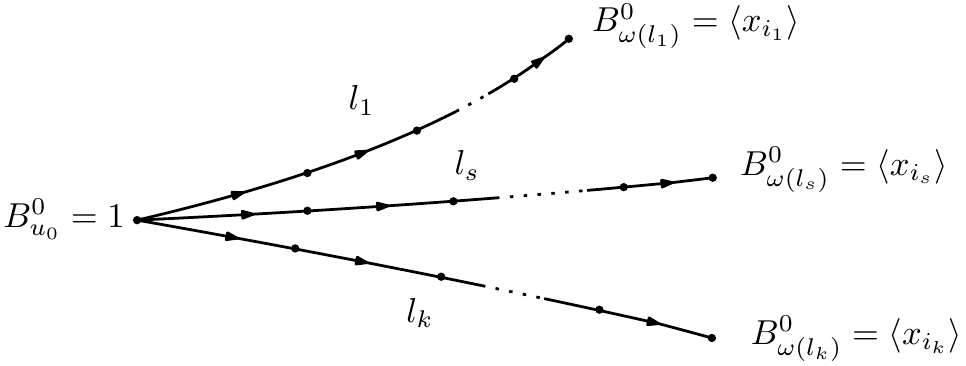}
\caption{The $\mathbb A$-graph $\mathcal B_0$.}{\label{fig:initial}}
\end{center}
\end{figure}

\begin{definition} 
We define the \emph{complexity} of a good $\mathbb A$-graph  $\mathcal B$  as    the tuple
$$c(\mathcal B) = (rank(\mathcal B), |EB|, |EB|-e_{full}(\mathcal B) , |EB|-|E_\mathcal B | ) \in \mathbb N_0^4.$$ 
Recall that $e_{full}(\mathcal B)$  denotes the number of full edges in $\mathcal B$ and $E_{\mathcal B}$ denotes  the set of edges in $\mathcal B$ with non-trivial group.  Throughout the proof we  assume that  $\mathbb N_0^4$ is equipped with  the lexicographic order.
\end{definition}
A straightforward inspection of the various cases reveals that all auxiliary moves preserve both goodness and  the  complexity, i.e.~if $\mathcal B'$ is obtained from a good $\mathbb A$-graph by an auxiliary move  then $\mathcal B'$ is also good and $c(\mathcal B')=c(\mathcal B)$. 

Now choose a good $\mathbb A$-graph  $\mathcal B$  such that the following hold:
\begin{enumerate}
\item[(1)]  $U\leq U(\mathcal B, u_0)$ for some  vertex $u_0$ of $\mathcal B$.

\item[(2)] the complexity of $\mathcal B$ is minimal among all good $\mathbb A$-graphs satisfying condition (1). 
\end{enumerate}

Since $\mathcal B_0$ is good and $U(\mathcal B_0, u_0)=U$  we conclude that   $rank(\mathcal B)\le rank(\mathcal B_0)=k=w(U)$. Thus $w(U(\mathcal B, u_0)) \le w(U)$. It remains to show that $U(\mathcal B, u_0)$ is good.     
According to Lemma~\ref{lemma:rank}  it suffices to show that $\mathcal B$ is folded.  We will argue by contradiction. Thus assume that $\mathcal B$ is not folded. Since the underlying graph of $\mathcal B$ is a tree, only folds of type IA and folds of type IIA can be applied to $\mathcal B$. 

\smallskip
 
\noindent\textbf{(IA)} Assume that a fold of  type IA can be applied to $\mathcal B$. Thus there are edges $f_1$ and $ f_2 $ in $\mathcal B$  with $u:=\alpha(f_1)=\alpha(f_2)$  and   $e'=[f_1]=[f_2]\in EA$ such that $$o_{f_2}=b o_{f_1} \alpha_{e'}(c)$$ for some $b\in B_u$ and some $c\in A_{e'}$.    After applying auxiliary moves to  $\mathcal B$  (see~\cite[section~1.5]{RW}) we can assume that the fold is elementary, that is, $o_{f_2}=o_{f_1}$. 

Let $\mathcal B'$ be any $\mathbb A$-graph that is obtained from $\mathcal B$ by collapsing  the essential pieces containing $\omega(f_1)$ and $\omega(f_2)$; hence,  there are no edges with non-trivial group in $\mathcal B'$ starting at $w(f_1)$ or $\omega(f_2)$.  Since the label of $f_1$ and $f_2$ is not affected, the elementary fold that identifies $f_1$ and $f_2$ can also be applied to $\mathcal B'$. 

Let $\mathcal B''$ be the $\mathbb A$-graph obtained from $\mathcal B'$ by this fold and let $y$ be the image of $\omega(f_1)$ (hence  also of $\omega(f_2)$) under the fold.  According \cite[Proposition~7]{RW} we have $U(\mathcal B'' , u_0'')=U(\mathcal B, u_0)$.  Since the vertex groups $B_{\omega(f_1)}'$ and  $B_{\omega(f_2)}'$  are replaced by   $$B_y''=\langle  B_{\omega(f_1)}', B_{\omega(f_2)}'\rangle \leq A_{[y]}$$ we see that  $B_y''$ is generated by at most four conjugates of $x_1$ and $x_2$.  If $B_y''$ is not cyclic, then we  replace $B_y''$ by $B_y'''=A_v=\langle x_1, x_2\rangle$.   In this way we obtain  a good $\mathbb A$-graph $\mathcal B'''$ such that $$U\le  U(\mathcal B'', u_0)\le U(\mathcal B''', u_0''').$$   The complexity decreases since  $rank(\mathcal B''')\le rank(\mathcal B)$ and  $|EB'''|= |EB|-2$.

\medskip
 
\noindent\textbf{(IIA)} Assume that no fold of type IA can be applied to $\mathcal B$. Since $\mathcal B$ is not folded   a fold of type IIA can be applied to $\mathcal B$. Thus  there is $f\in EB$    with $u:=\alpha(f)$,  $u':=\omega(f)$ and $e':=[f]\in EA=\{e, e^{-1}\}$    such that 
\begin{equation}{\label{eq:01}}
o_f \alpha_{e'}(B_f) o_f^{-1} \neq o_f\alpha_{e'}(A_{e'}) o_f^{-1} \cap B_u.
\end{equation}
Since all edges with non-trivial group  lie in some essential piece and these $\mathbb A$-graphs are folded  by definition,  we conclude that $B_f=1$.  Thus (\ref{eq:01}) means that 
\begin{equation}{\label{eq:02}}
o_f\alpha_{e'}(A_{e'}) o_f^{-1} \cap B_u \neq 1.
\end{equation}

To simplify the argument we assume that $e'=e$.  The case where $f$ is of type $e^{-1}$ is entirely analogous. We need to consider various cases depending on the vertex groups  $B_u$ and $B_{u'}$.

\smallskip

\noindent\textit{Case 1: $u$ is full, that is,  $B_u=A_v=\langle x_1, x_2\rangle$.}  The assumption that no fold of type IA can be applied to $\mathcal B$  implies that there is at most one edge in $Star_B(u)\setminus \{f\}$  which is necessarily of type $e^{-1}$.  We need to consider all possibilities for $B_{u'}$.   

\smallskip

\noindent \emph{(a) $B_{u'}$ is trivial.} We define  $\mathcal B'$ as  the $\mathbb A$-graph that is obtained from $\mathcal B$  by replacing  the edge group  $B_f=1$ by $B_f'=A_e=\langle y_1, y_2\rangle$ and the vertex group $B_{u'}=1$ by $$B_{u'}'=t_f^{-1} \omega_e(A_e) t_f= t_f^{-1} \langle \omega_e(y_1)  ,  \omega_e(y_2) \rangle t_f\le A_v,$$ see Figure~\ref{fig:IIAfull}.   
\begin{figure}[h!] 
\begin{center}
\includegraphics[scale=1]{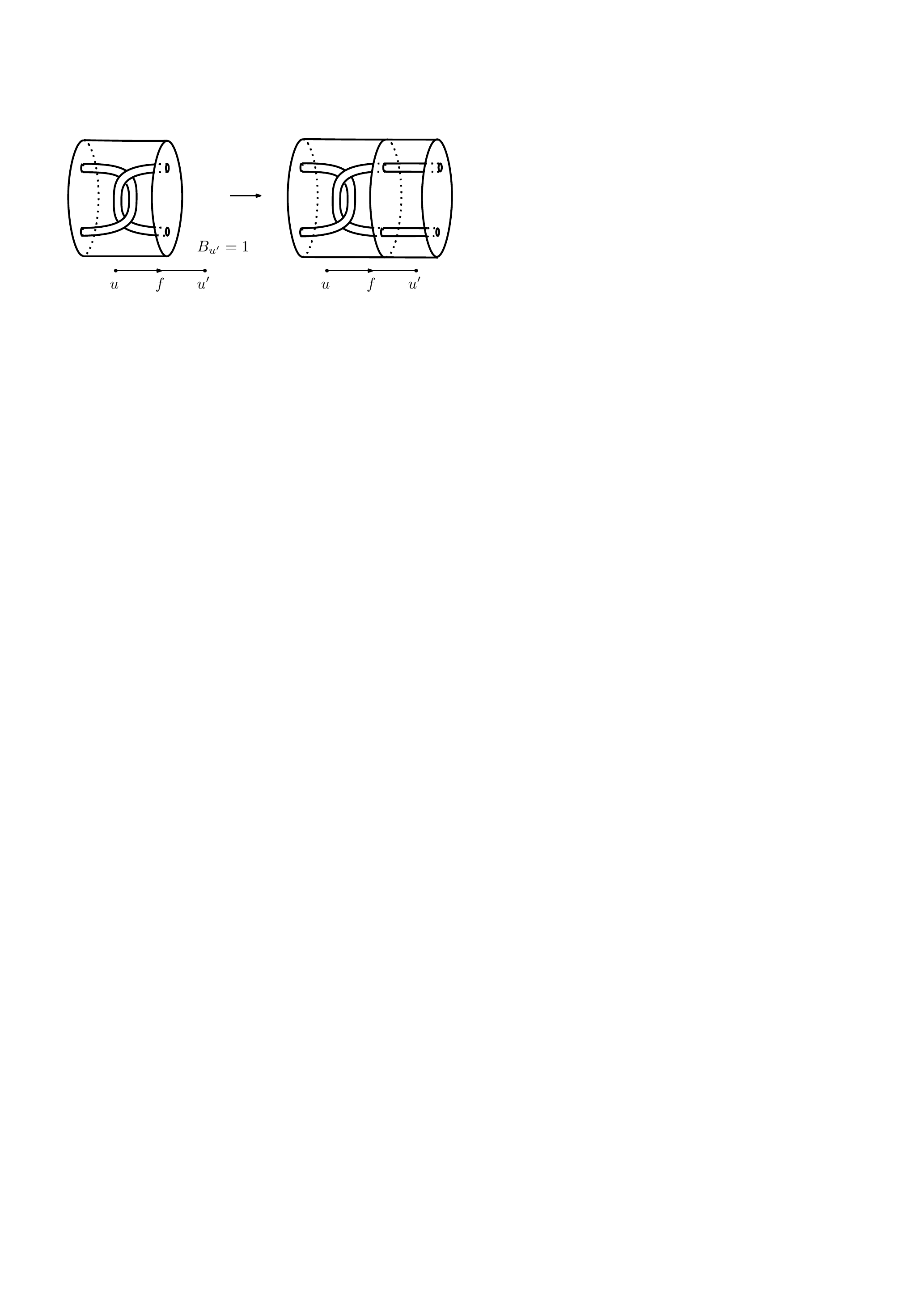}
\caption{Case (1.a): $B_{u'}'=t_f^{-1} \langle \omega_e(y_1)  ,  \omega_e(y_2) \rangle t_f$}{\label{fig:IIAfull}}
\end{center}
\end{figure}
Then $\mathcal B'$ is good since this move   simply ``enlarges'' the essential piece that contains $u$. Moreover,  $c(\mathcal B')<c(\mathcal B)$ since $rank(\mathcal B')=rank(\mathcal B)$, $|EB'|=|EB|$ and $e_{full}(\mathcal B')=e_{full}(\mathcal B)+2$.

\smallskip

\noindent\emph{(b) $u' $ is cyclic.} We first consider the case $Star_B(u')\cap E_{\mathcal B}$ is empty. We define $\mathcal B'$ as the $\mathbb A$-graph that is obtained from $\mathcal B$ by replacing the edge group $B_f=1$ by $B_f'=A_e=\langle y_1, y_2\rangle$ and the vertex group $B_{u'}$ by $B_{u'}'= A_v=\langle x_1, x_2\rangle$, see Figure~\ref{fig:IIAfull1}. It is not hard to see that $\mathcal B'$ is good. The complexity decreases  since  $rank(\mathcal B')=rank(\mathcal B)$, $EB'=EB$ and $e_{full}(\mathcal B')=e_{full}(\mathcal B)+2$.
\begin{figure}[h!] 
\begin{center}
\includegraphics[scale=1]{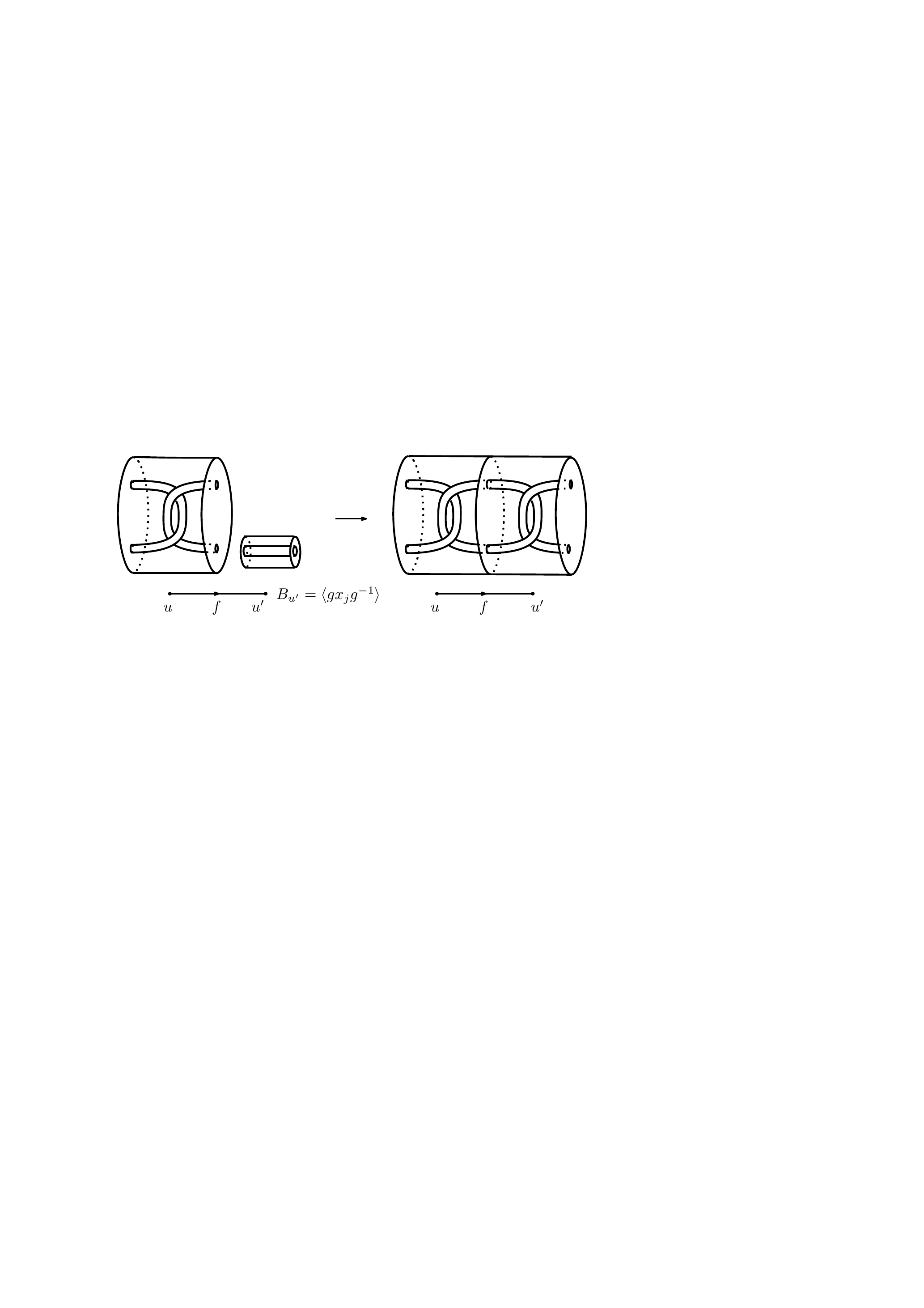}
\caption{Case (1.b). }{\label{fig:IIAfull1}}
\end{center}
\end{figure}

Assume now that $Star_B(u')\cap E_{\mathcal B}\neq \emptyset$. It follows immediately  from the definition of $\mathbb A$-graphs of (non-)cyclic  type that $ Star_{B}(u')\cap E_\mathcal B$ contains exactly one edge, say $h$, such that   $$B_h=\langle  cy_ic^{-1}\rangle \ \ \text{ and } \ \ B_{u'}=  o_h \langle \alpha_{e''}(cy_ic^{-1})\rangle o_h^{ -1}$$ for some $i\in \{1, 2\}$ and some $c\in A_{e}$ where $e'':=[h]\in EA$. Let $\mathcal B'$ be the $\mathbb A$-graph obtained from $\mathcal B$ by  doing the following:
\begin{enumerate}
\item[(a)] replace the edge group  $B_h=\langle c y_ic^{-1}\rangle$ by $B_h'=1$.

\item[(b)] replace the edge group  $B_f=1$ by $B_f'=A_e=\langle y_1, y_2\rangle$.  

\item[(c)] replace the vertex group  $B_{u'}= o_h\langle \alpha_{e''}(cy_ic^{-1})\rangle o_h^{-1}$ by 
$$B_{u'}= t_f^{-1} \omega_e(A_e) t_f = t_f^{-1} \langle   \omega_e(y_1) ,   \omega_e(y_2)\rangle t_f \le A_v.$$
\end{enumerate}  
\begin{figure}[h!] 
\begin{center}
\includegraphics[scale=1]{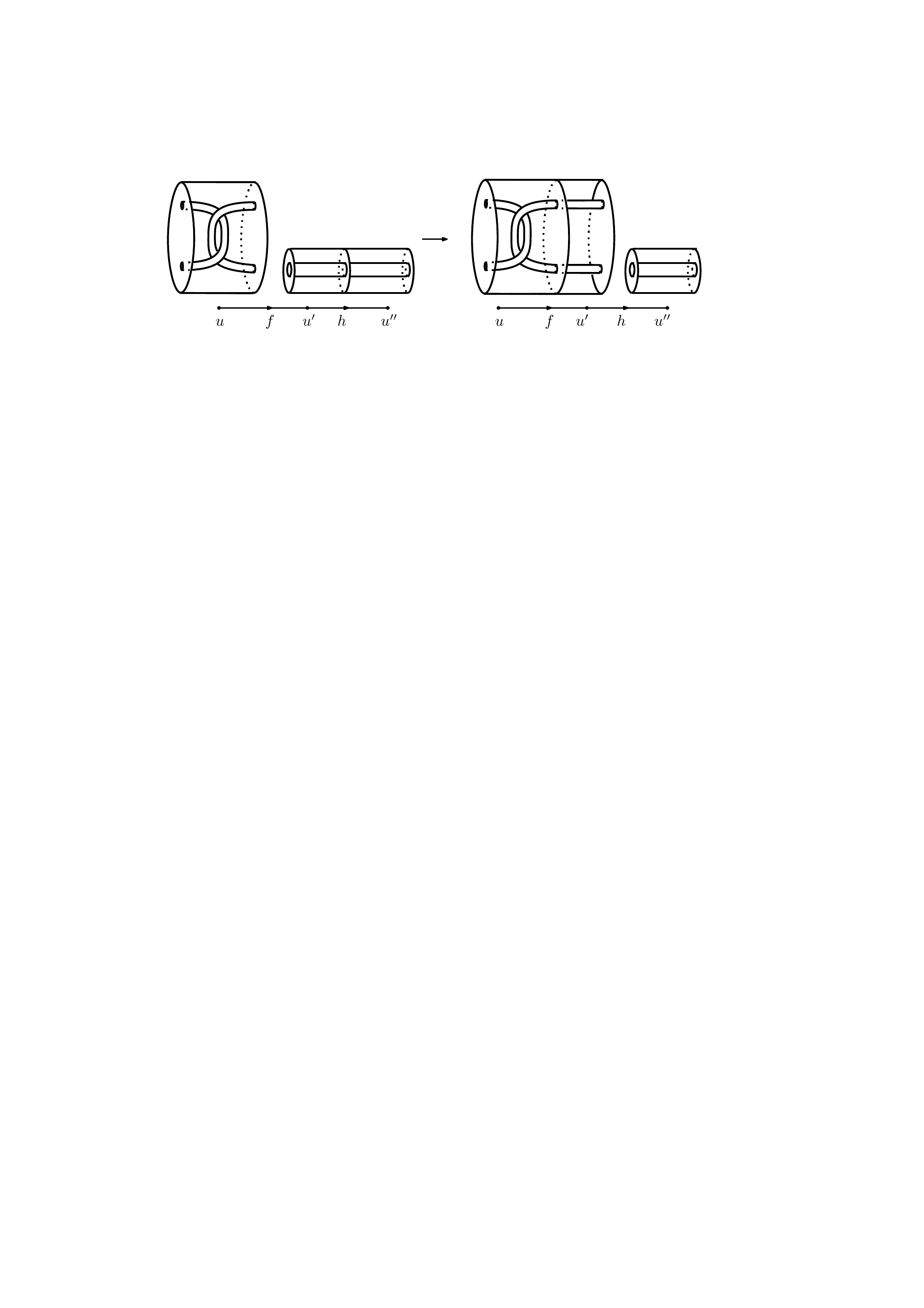}
\caption{Case (1.b).}{\label{fig:IIAfull5}}
\end{center}
\end{figure}
It is not hard to see that $\mathcal B'$ is good. Moreover,  $c(\mathcal B')<c(\mathcal B)$ since $rank(\mathcal B')= rank(\mathcal B)$, $|EB'|=|EB|$ and  $e_{full}(\mathcal B')=e_{full}(\mathcal B)+2$.

\smallskip

\noindent\emph{(c) $u'$ is almost full}. By definition    there is  an  unique  edge   $h\in EB$  such that  $B_h=A_{e''}=\langle y_1, y_2\rangle$, $B_{u''}=A_v=\langle x_1, x_2\rangle$ and 
$$B_{u'}= o_h \alpha_{e''}(A_{e''}) o_h^{-1}=o_h \langle \alpha_{e''}(y_1) , \alpha_{e''}(y_2) \rangle o_h^{-1} $$
where $e'':=[h]\in EA$ and $u''=\omega(h)\in VB$.   
After unfolding  along edges with cyclic group in the essential piece that contains $u'$  we can assume that $Star_B(u')\cap E_{\mathcal B}= \{h\}$. 

If $e'' =e$ then we define $\mathcal B'$ as the $\mathbb A$-graph that is obtained from $\mathcal B$ by replacing the edge group  $B_f=1$ by $B_f'=A_e$  and  the vertex group $B_{u'}$ by $B_{u'}'=A_v=\langle x_1, x_2\rangle$. Thus $\mathcal B'$ is good and $u'$ becomes a full vertex, see Figure~\ref{fig:IIAfull3}. 
\begin{figure}[h!] 
\begin{center}
\includegraphics[scale=1]{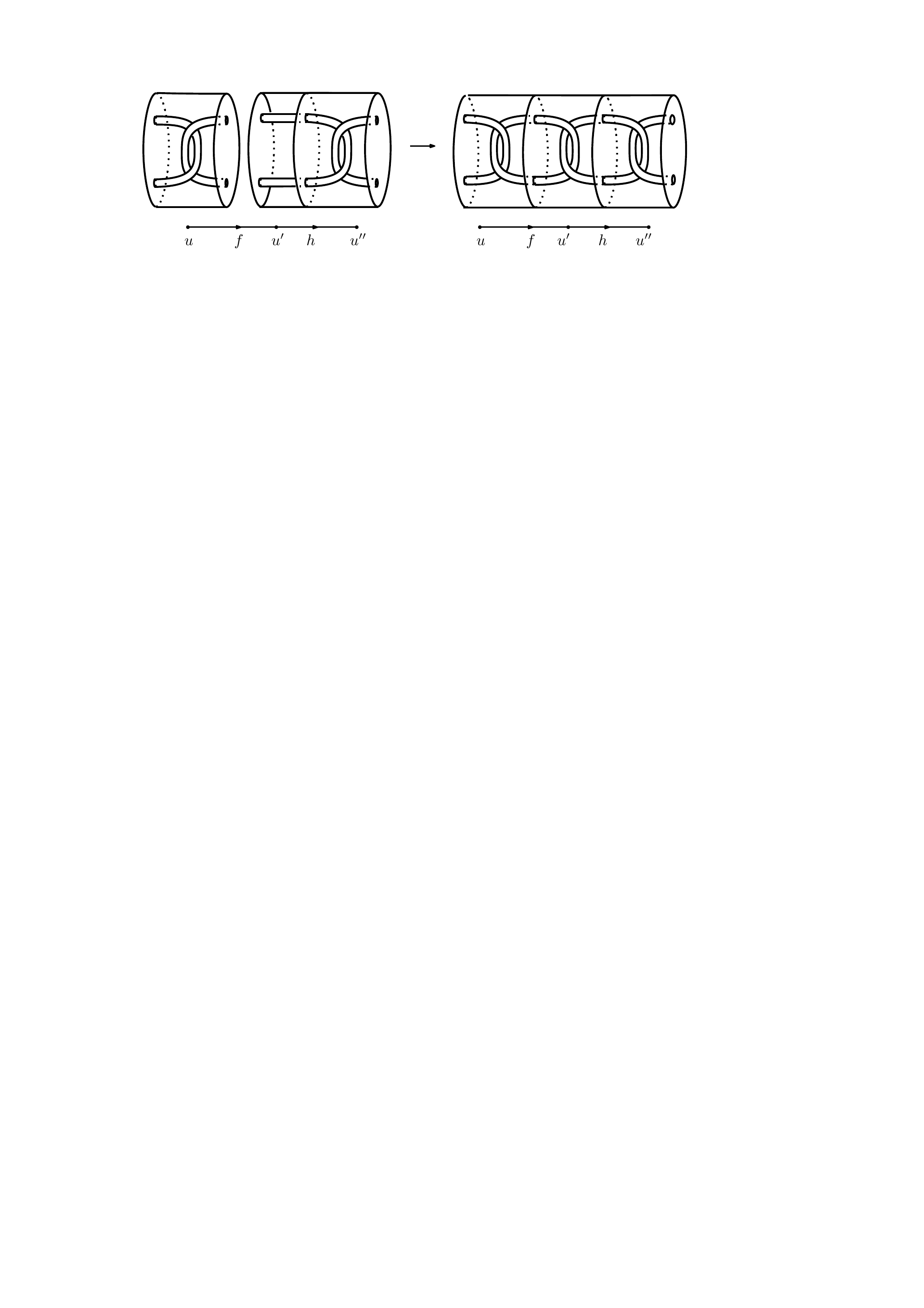}
\caption{Case (1.c).}{\label{fig:IIAfull3}}
\end{center}
\end{figure}
The complexity decreases since $rank(\mathcal B')=rank(\mathcal B)$, $EB'=EB$  and  $e_{full}(\mathcal B')=e_{full}(\mathcal B)+2$.

If $e''=e^{-1}$ then the previous move does not yield a good marked $\mathbb A$-graph since $f^{-1}$ and $h$ can then be folded and so cannot lie in an $\mathbb A$-graph of non-cyclic type. In this case we define a new  good $\mathbb A$-graph in which a fold of type IA can be applied to. 

First let $\mathcal B'$ be the $\mathbb A$-graph that is obtained from $\mathcal B$ by collapsing the  essential piece that contains $u$ into $u$. Thus $B_u'=B_u=\langle x_1, x_2\rangle$ since $u$  is full. After applying an auxiliary move of type A2 based on  the edge $f$  to $\mathcal B'$ we can assume that $o_f=1$. 

Now let $\mathcal B''$ be the $\mathbb A$-graph that is obtained from $\mathcal B'$  by pushing the element $x_1$ along $f$, that is, we replace $B_u'=\langle x_1, x_2\rangle$ by $B_u''=\langle x_2\rangle$ and $B_{u'}'=B_{u'}$ by $B_{u'}''=A_v$, see Figure~\ref{fig:IIAfull8}.\begin{figure}[h!] 
\begin{center}
\includegraphics[scale=1]{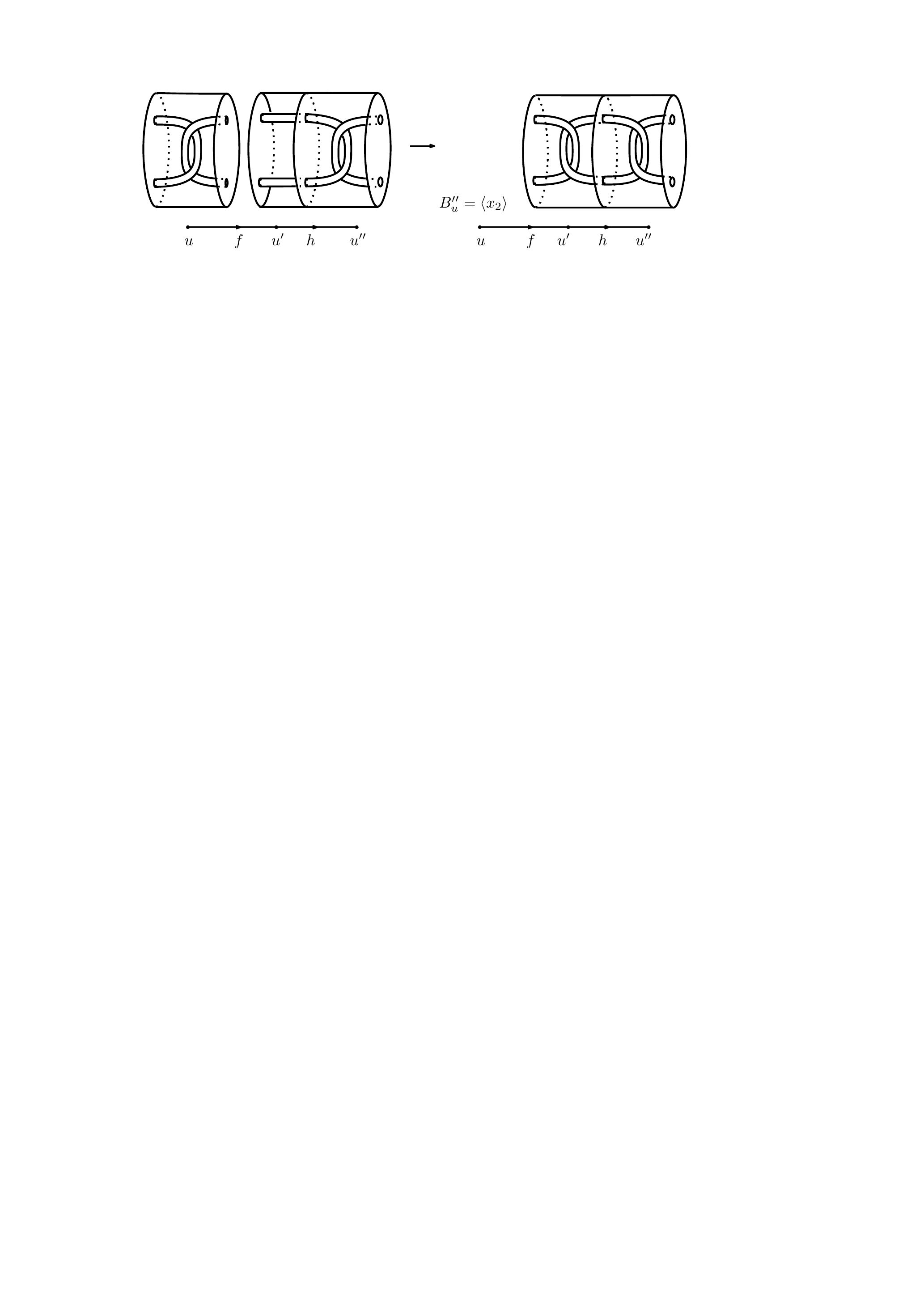}
\caption{Case (1.c).}{\label{fig:IIAfull8}}
\end{center}
\end{figure} 
The assumption that $Star_{B}(u')$ contains only $h$  ensures that  $\mathcal B''$ is good. Moreover, $rank(\mathcal B')=rank(\mathcal B)$. 

Since $f^{-1}$ and $h$  are of same type and $B_{u'}''=A_v$ it follows that a fold of type IA that identifies $f^{-1}$ with $h$ can be applied to $\mathcal B''$.  After folding  these edges we  obtain a good $\mathbb A$-graph $\mathcal B'''$.  To see that the rank decreases observe that the vertex groups $B_{u}''=\langle x_2\rangle$ and $B_{u''}''=A_v=\langle x_1, x_2\rangle$ are replaced by $A_v=\langle x_1, x_2\rangle$. Thus $c(\mathcal B''')<c(\mathcal B)$.

\noindent\emph{(d) $u'$ is full, that is, $B_{u'}=A_v=\langle x_1, x_2\rangle$.}  In this case we simply replace   $B_f=1$ by $B_f'= A_e=\langle y_1, y_2\rangle$, see Figure~\ref{fig:IIAfull2}.  The assumption that no fold of type IA  guarantees that  $\mathcal B'$ is good.      Moreover,   $rank(\mathcal B') \le rank(\mathcal B)-1$ which implies that $c(\mathcal B')<c(\mathcal B)$. 
\begin{figure}[h!] 
\begin{center}
\includegraphics[scale=1]{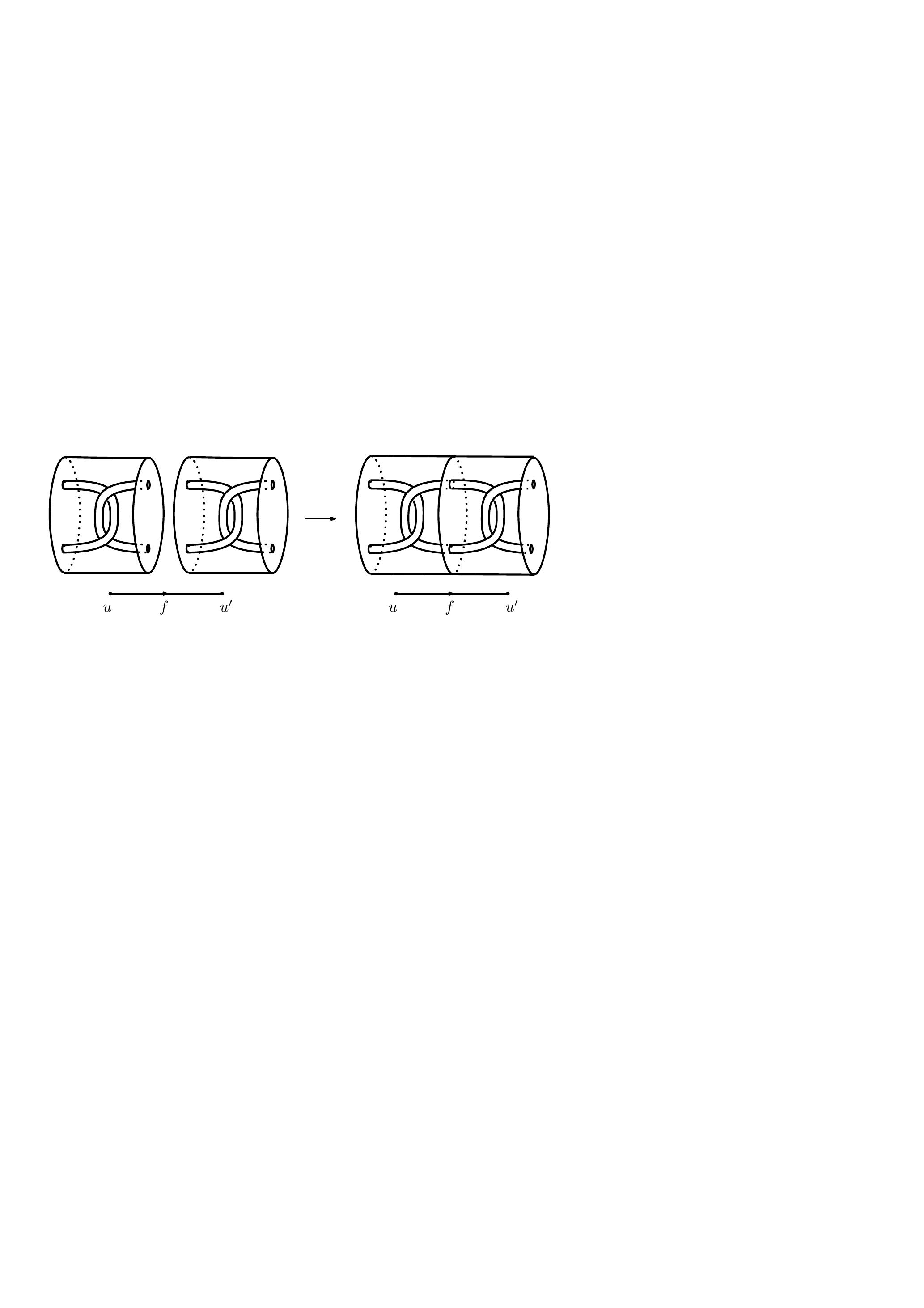}
\caption{Case (1.d).}{\label{fig:IIAfull2}}
\end{center}
\end{figure}

\medskip

\noindent\emph{Case 2: $u$ is almost full}. By definition,  there is a  unique  edge $h\in Star_B(u )$ such that $B_h=A_{e''}=\langle y_1 , y_2\rangle$, $B_{u''}=A_v=\langle x_1, x_2\rangle$ and  
$$B_{u } =o_h \alpha_{e'' } (A_{e''}) o_h^{-1}= o_h\langle \alpha_{e'' }(y_1), \alpha_{e''}(y_2)\rangle o_h^{-1}, $$  
where $e''=[h]\in EA$ and $u''=\omega(h)\in VB$. Since $$o_h\alpha_{e''}(A_{e''})o_h^{-1} \cap o_f \alpha_e(A_e)o_f^{-1} = B_u \cap o_f \alpha_e(A_e)o_f^{-1}\neq 1$$   it follows from Lemma~\ref{lemma:folds}(2) that  $e''=e$. Lemma~\ref{lemma:folds}(2) further implies that   $$B_u \cap o_f \alpha_e(A_e)o_f^{-1} = o_f \langle \alpha_e(y_i)\rangle o_f^{-1}$$ for some $i\in \{1, 2\}$. We can assume that $u'$ is not full since otherwise we apply the argument given in Case 1  to $f^{-1}$.  We distinguish two cases depending on the vertex group $B_{u'}$.

\smallskip

\noindent{\emph{(a)  $B_{u'}=1$.}} We define $\mathcal B'$  as the $\mathbb A$-graph obtained from  $\mathcal B$ by doing the following:
\begin{enumerate}
\item replace $B_f=1$ by $B_f'=\langle y_i\rangle$.

\item replace  $B_{u'}=1$ by $B_{u'}'= \langle t_f^{-1}\omega_e(y_i) t_f\rangle$.

\end{enumerate}
The construction of $\mathcal B'$ is described in  Figure~\ref{fig:IIAfull10}.  The assumption that no fold of type IA can be applied to $\mathcal B$  ensure that   $\mathcal B'$ is good. Moreover, the complexity decreases  as $rank(\mathcal B')=rank(\mathcal B)$, $|EB'|=|EB|$, $e_{full}(\mathcal B')=e_{full}(\mathcal B)$    and $|E_{\mathcal B'}|=|E_\mathcal B|+2$.
\begin{figure}[h!] 
\begin{center}
\includegraphics[scale=1]{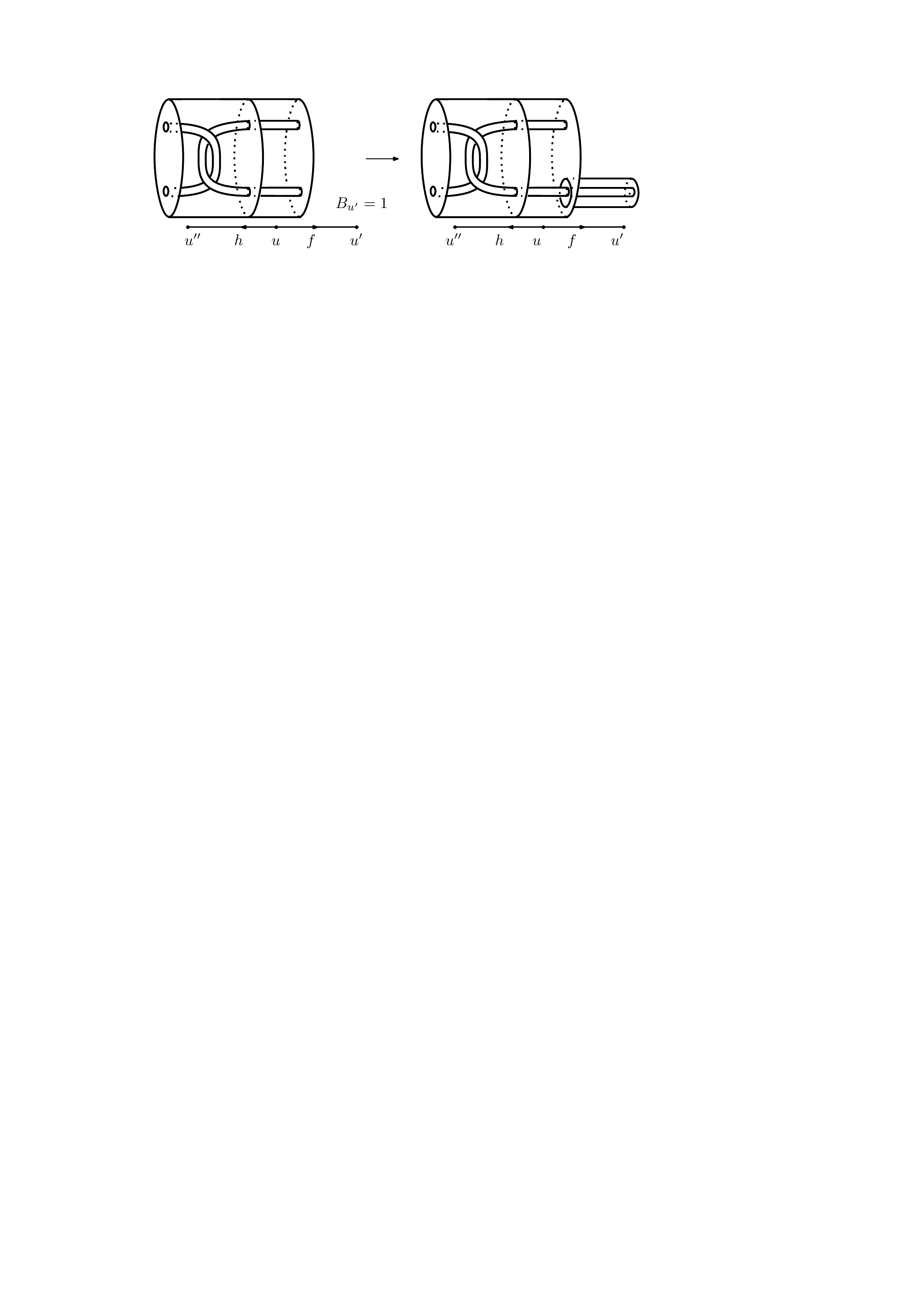}
\caption{Case (2.a).}{\label{fig:IIAfull10}}
\end{center}
\end{figure}

\smallskip

\noindent\emph{(b) $B_{u'}\neq 1$.} Let $\mathcal B(u)$ (resp.~$\mathcal B(u')$) denote the essential piece that contains $u$ (resp.~$u'$). We first construct $\mathcal B'$  by collapsing $\mathcal B(u')$  into $u'$ (hence $B_{u'}'\neq 1$) and by collapsing cyclic edges in $\mathcal B(u)$  so that $Star_{B'}(u)\cap E_{\mathcal B'}=\{h\}$.

Let now $\mathcal B''$ be the $\mathbb A$-graph that is obtained from $\mathcal B'$ by replacing the vertex group  $B_u'=B_u$ by $B_u''=A_v$, see Figure~\ref{fig:IIAfull511}. Note that the rank increases by one, hence  $c(\mathcal B'')>c(\mathcal B)$.  The edges $f$ and $h$ can be folded since  $u$ is full in $\mathcal B''$ and $f$ and $h$ are of same type. The $\mathbb A$-graph that is obtained by this fold is good and its rank is at most $rank(\mathcal B)$ since the vertices $u'$ and $u''$ are identified.  The complexity decreases because the number of edges decreases. 
\begin{figure}[h!] 
\begin{center}
\includegraphics[scale=1]{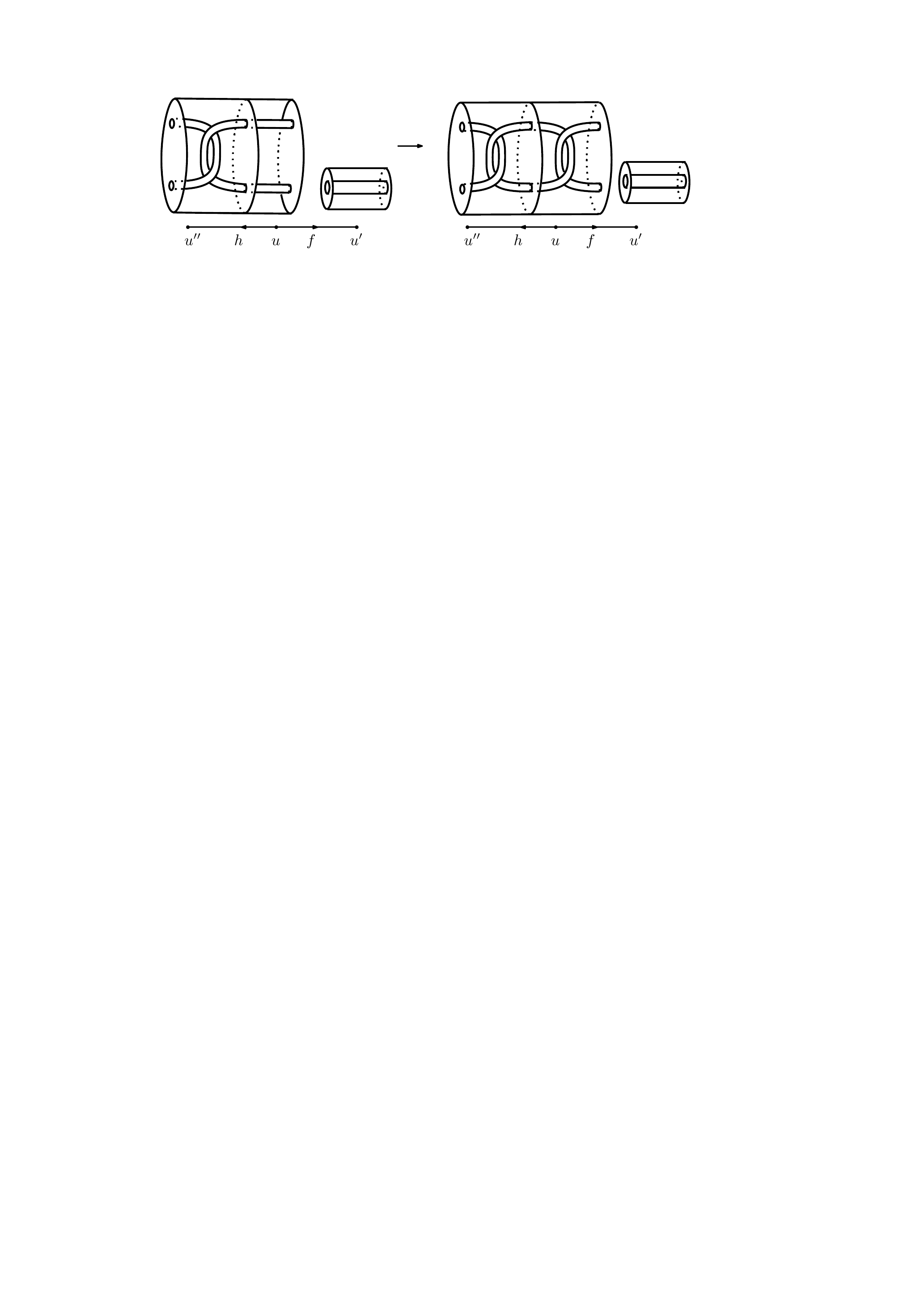}
\caption{Case (2.b). The construction of $\mathcal B''$. }{\label{fig:IIAfull511}}
\end{center}
\end{figure}

\medskip

\noindent\emph{Case 3: $u$ is cyclic.} As in the previous case we can assume that $u'$ is not full.  As $u$ is cyclic we have $B_u=\langle g x_i g^{-1}\rangle$ for some $g\in A_v$ and some $i\in \{1, 2\}$. Since the intersection   $B_u\cap o_f \alpha_e(A_e) o_f^{-1}$ is non-trivial, Lemma~\ref{lemma:conjsep} implies  that $i=1$.   Lemma~\ref{lemma:folds}(1)  tells us that  after auxiliary moves we have   $B_u=\langle x_1\rangle$  and $o_f= x_2^{-\varepsilon}$ for some $\varepsilon\in \{0,1\}$.     We distinguish two cases depending on the vertex group $B_{u'}$.

\smallskip

\noindent(a)  \emph{$B_{u'}=1$.} In this case we simply replace  the edge group $B_f=1$ by $B_f'=\langle y_i\rangle$ and the vertex group  $B_{u'}=1$ by $$B_{u'}'=\langle t_f^{-1} \omega_e(y_i)t_f\rangle  \leq A_v.$$ 
Observe that  $i=1$ if $\varepsilon =1$ and $i=2$ is $\varepsilon =0$.  The new $\mathbb A$-graph is clearly good. The complexity decreases because  $rank(\mathcal B')=rank(\mathcal B)$ , $|EB'|=|EB|$, $e_{full}(\mathcal B')=e_{full}(\mathcal B)$ and $|E_{\mathcal B'} |=|E_\mathcal B |+2$.

\smallskip

\noindent(b)  \emph{$B_{u'}\neq 1$.} Let $\mathcal B(u)$ (resp.~$\mathcal B(u')$) denote the essential piece of $\mathcal B$ that contains the vertex $u$ (resp.~the vertex $u'$).    We  need to consider various cases depending on $\mathcal B(u)$ and $\mathcal B(u')$.

\smallskip

\noindent\emph{$\mathcal B(u)$ and $\mathcal B(u')$ are cyclic essential pieces.} We first collapse  $\mathcal B(u)$ and $\mathcal B(u')$ into $u$ and $u'$ respectively. Then we  replace  the vertex group $B_{u'}$ by $B_{u'}'=A_v=\langle x_1, x_2\rangle$, the vertex group  $B_u $ by $\alpha_e(A_e)= \langle x_2 x_1^{-1} x_2^{-1}, x_1\rangle\leq A_v$ and the edge group  $B_f=1$ by $B_f'=A_e=\langle y_1, y_2\rangle$, see Figure~\ref{fig:IIAfull12}. This $\mathbb A$-graph is good and its complexity decreases since  the number of full edges increase by two.  Note that this move turns $u$ (resp.~$u'$) into an almost full vertex (resp.~full vertex).
\begin{figure}[h!] 
\begin{center}
\includegraphics[scale=1]{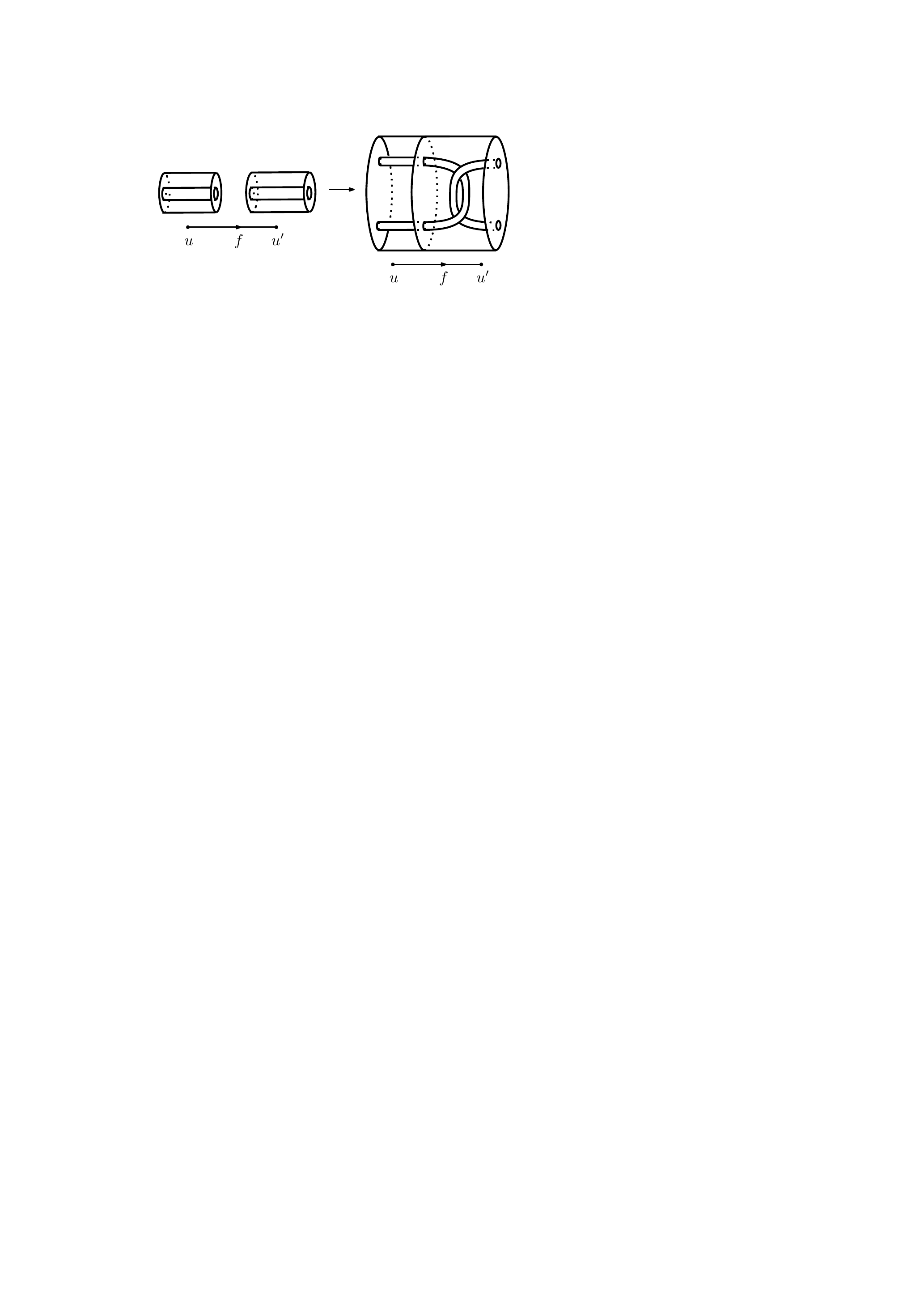}
\caption{Case (3).}{\label{fig:IIAfull12}}
\end{center}
\end{figure}

\smallskip

\noindent\emph{$\mathcal B(u)$ is a  non-cyclic essential piece. } After collapsing  $\mathcal B(u')$ into $u'$  we can assume that the vertex group  $B_{u'}$ is cyclic   and $Star_B(u')\cap E_{\mathcal B}=\emptyset$. Let $u''$ be the closest full vertex in $\mathcal B(u)$ to $u$ and let $f_1, \ldots , f_k$ be the shortest path from $u$ to $u''$.     After collapsing $\mathcal B(u)$  into $u$ we can assume that $B_u=\langle x_1\rangle$ and $B_{u''}=\langle g x_i g^{-1}\rangle$ for some $g\in A_v$ and some $i\in \{1, 2\}$. It follows from Lemma~\ref{lemma:folds} that $[f_i]=e$  if $i\in \{1, \ldots , k\}$ is odd and  $[f_i]=e^{-1}$ if $i\in \{1, \ldots , k\}$ is even.

Let $\mathcal B'$ be the $\mathbb A$-graph obtained from $\mathcal B$ by replacing the vertex groups $B_u$ and $B_{u'}$ by $B_{u}'=A_v=\langle x_1, x_2\rangle$ and  $B_{u'}'=A_v=\langle x_1, x_2\rangle$ and replacing the edge group  $B_f=1$ by $B_f'=A_e=\langle x_1, x_2\rangle$. Thus  $u$ and $u'$ (resp.~$f$) are full vertices (resp.~full edge) in $\mathcal B'$.  Observe further that $\mathcal B'$ is good  and represents a meridional subgroup that contains $U(\mathcal B, u_0)$ because we are replacing vertex groups and edge groups by larger ones. The only issue we need to solve is the rank  which increase  by one  in going from $\mathcal B$ to $\mathcal B'$. We remedy this as follows.  Since $u$ and $u'$ are full vertices and $[f_i]=e$  we conclude that the vertex  $u''=\omega(f_k)$ folds onto $u$ or $u'$ depending on the parity of $d$, see Figure~\ref{fig:IIAfull13}. Thus the rank of the resulting $\mathbb A$-graph clearly   drops back  to $rank(\mathcal B)$.  Since the number of edges decreases we see that $c(\mathcal B')<c(\mathcal B)$. 
\begin{figure}[h!] 
\begin{center}
\includegraphics[scale=1]{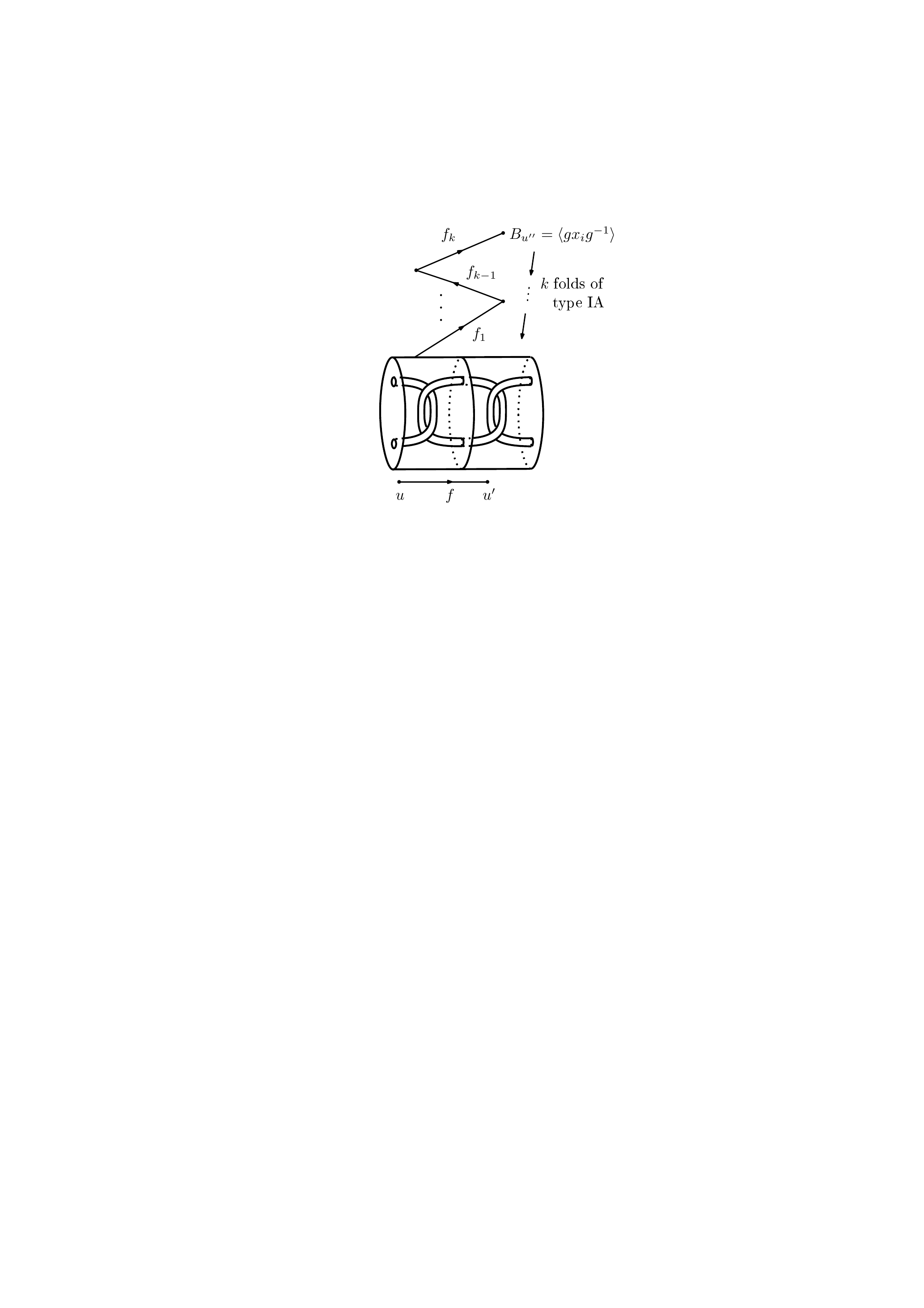}
\caption{Case (2.b). $u''$ folds onto $u$ or $u'$. }{\label{fig:IIAfull13}}
\end{center}
\end{figure}

\smallskip

\noindent\emph{$\mathcal B(u)$ is cyclic and $\mathcal B(u')$ is non-cyclic.} We modify $\mathcal B$ to fall in the previous case. After unfolding along some edges in $\mathcal B(u')$ we can  assume that $\{h\}= Star_B(u')\cap E_\mathcal B$.  Observe that $u'$  either cyclic or almost full.  

Let $\mathcal B'$ be the $\mathbb A$-graph  obtained from $\mathcal B$ by doing the following:
\begin{enumerate}
\item replace all vertex and edge groups in $\mathcal B(u)$ by trivial groups.

\item replace $B_{u'}$ by $B_{u'}'=\langle t_f^{-1} \omega_e(y_1) t_f\rangle$.

\item replace $B_{h}$ by $B_h=1$.
\end{enumerate}
Thus $\mathcal B'$ is good and the rank is not affected.  Now a fold along the edge $h^{-1}$ can be applied to $\mathcal B'$. To see that we fall in the previous case observe that the essential piece of $\mathcal B'$ that contains   $u'$ is cyclic  (as it contains only the vertex $u'$) and the essential piece of $\mathcal B'$ that contains $u'':=\omega(h)$ is non-cyclic since $u'$ is not full in $\mathcal B$.  \end{proof}


\section{Meridional rank of Whitehead doubles}
In this section we prove Theorem~\ref{thm01}.   We follow the notation introduced in Section~\ref{section:basicdef}.  We will first compute the bridge number of the Whitehead double of an arbitrary knot. Thus let $\mathfrak k_1$ be an arbitrary non-trivial knot  and let $\mathfrak k$ be a Whitehead double of $\mathfrak k_1$. We compute  $b(\mathfrak k)$ using the notion of plat presentations.  

We say that a knot $\mathfrak k'$ is a plat on $2n$-strings (or simply a $2n$-plat) if $\mathfrak k'$ is  the union of a  braid  with $2n$ strings  and $2n$ unlinked and unknotted arcs which connect pairs of consecutive strings of the braid at the top and at the bottom endpoints, see Figure~\ref{fig:plat}.  Any knot admits a plat presentation, that is,  it is ambient isotopic to a plat.  
According to \cite[Theorem~5.2]{Birman} the minimal $n$ such that $\mathfrak k'$ admits a plat presentation  with $2n$-strings coincides with the bridge number of $\mathfrak k'$.     
\begin{figure}[h!] 
\begin{center}
\includegraphics[scale=1]{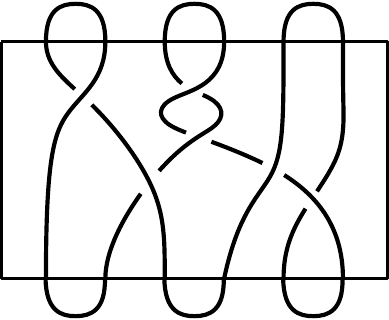}
\caption{A 6-plat.}{\label{fig:plat}}
\end{center}
\end{figure}

We  show that $b(\mathfrak k)=2 b(\mathfrak k_1)$.  A plat presentation with $2(2b(\mathfrak k_1))$-strings   for $\mathfrak k$ is described in Figure~\ref{fig:plat1}. Hence $b(\mathfrak k)\le 2b(\mathfrak k_1)$. On the other hand, as the index of  the Whitehead pattern $(V,\mathfrak k_0)$  is  $2$, it follows from \cite{Schu} (see also \cite{Schultens}) that $b(\mathfrak k)\ge 2b(\mathfrak k_1)$.    
\begin{figure}[h!] 
\begin{center}
\includegraphics[scale=1]{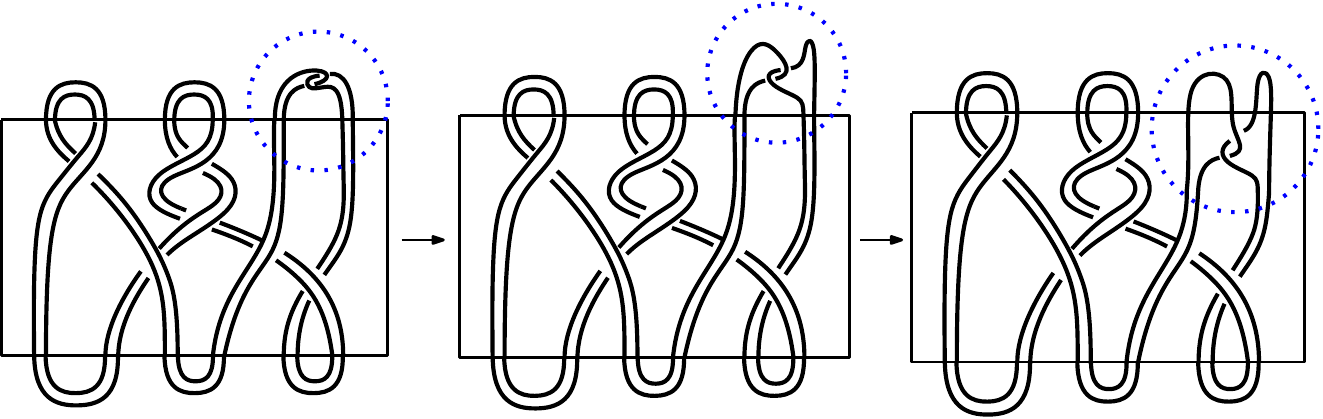}
\caption{Whitehead double of $\mathfrak k_1$.}{\label{fig:plat1}}
\end{center}
\end{figure}

\smallskip

We now show that  if $\mathfrak k_1$ is prime and algebraically tame then $w(\mathfrak k)=b(\mathfrak k)$.  Let $\mathbb A$ be the graph of groups   described in subsection~\ref{sub:sattelite}.   Recall that  
$$A_{v_0}=\pi_1(E, e_0)= \langle x_1, x_2, l_V \ | \   l_V \cdot  x_2x_1^{-1} x_2^{-1} x_1 x_2^{-1}\cdot   l_V^{-1}=x_2 x_1^{-1} x_2^{-1}  ,  l_V\cdot  x_2\cdot  l_V^{-1}=x_1\rangle. $$ 
By choosing the base point appropriately we  can assume that $m=[x_1]\in \pi_1(\mathbb A , v_0)$ is the fixed   meridian of $\mathfrak k$.

 We say that an $\mathbb A$-graph $\mathcal B$ is \emph{tame} if the following hold:
\begin{enumerate}
\item[(T0)] $\mathcal B$ is $\pi_1$-surjective meaning that the induced homomorphism  $$\phi:\pi_1(\mathbb B, u_0)\to \pi_1(\mathbb A , v_0)\cong G(\mathfrak k)$$ 
is surjective for some (and therefore) any vertex $u_0$ of type $v_0$.

\item[(T1)] the underlying graph $B$  is a finite tree. 

\item[(T3)] edge groups are either trivial or equal to $ \langle m_e\rangle$.     

\item[(T3)] for  each  vertex $u\in VB$  of type $v_0$  the  vertex group $B_u\leq A_{v_0}$ is good.

\item[(T4)] for each vertex $u\in VB$ of type $v_1$  the following hold:
\begin{enumerate}
\item  $B_u=\langle  g_1m_1g_1^{-1}, \ldots  , g_rm_1g_r^{-1}\rangle$  
where $  r= w(B_u) < b(\mathfrak k_1)$. 

\item for each $i=1, \ldots , r$ there is an edge $f_i\in   Star_B(u)\cap E_{\mathcal B}$ such that  $g_im_1g_i^{-1}$ is (in $B_u$) conjugate to $ o_{f_i} m_1 o_{f_i}^{-1}= o_{f_i} \alpha_e(m_e) o_{f_i}^{-1}$.
\end{enumerate} 
\end{enumerate}

The complexity of a tame $\mathbb A$-graph is defined  as the triple
$$c(\mathcal B):=(c_1(\mathcal B), |EB|, |EB|-|E_{\mathcal B}|) \in 	\mathbb N_0^3$$
where 
$$c_1(\mathcal B):= \sum_{\substack{ u\in VB  \\  [u]=v_0}}  \bar{w}(B_u)$$
Recall that $\bar{w}(B_u)$ denotes the minimal number of conjugates (in $A_{v_0}$) of $x_1$ and $x_2$  needed to generate $B_u$.  The set $\mathbb N_0^3$ is equipped with the  lexicographic order.

A straightforward inspection of the various cases reveals that all auxiliary moves preserve tameness and   complexity, i.e.~if $\mathcal B'$ is obtained from a tame $\mathbb A$-graph $\mathcal B$ by an auxiliary move  then $\mathcal B'$ is also tame and $c(\mathcal B')=c(\mathcal B)$.  
  
\begin{proof}[proof of Theorem~\ref{thm01}]
We want to show that $w(\mathfrak k)=b(\mathfrak k)$. The proof will be contradiction. Thus  assume that $$G(\mathfrak k)=\langle g_1mg_1^{-1} , \ldots  , g_lmg_l^{-1}\rangle$$ with $l<b(\mathfrak k)$. Each $g_i$ can be written as $g_i=[p_i]$ where $p_i$ is a non-necessarily reduced $\mathbb A$-path from $v_0$ to $v_0$ of  positive length.

We define an $\mathbb A$-graph $\mathcal B_0$ as follows. Start with a vertex  $u_0$ of type $v_0$ and for  each $1\leq s\leq l$ we glue an interval $l_s$ subdivided into $length(p_s)$ segments to $u_0$. The label of $l_s$  is defined so that  $\mu(l_s)=p_s$. The vertex group of $\omega(l_s)$ is $\langle  m\rangle$ and the  remaining vertex and edge groups are trivial. Therefore
$$\phi( [l_s m l_s^{-1}]) = [\mu(l_s) m \mu(l_s)^{-1}]=[ p_s mp_s^{-1}]= g_s x_{i_s} g_ s^{-1}, $$
and $\mathcal B_0$ is $\pi_1$-surjective. Observe that $\mathcal B_0$ is  tame since the only non-trivial vertex groups are cyclically generated by the meridian $m=[x_1]\in A_{v_0}$, and so are good subgroups of $A_{v_0}$.   

The claim that $w(\mathfrak k)<b(\mathfrak k)$ therefore  implies the existence  of a tame $\mathbb A$-graph.

\smallskip
 
Now choose a tame $\mathbb A$-graph $\mathcal B$ such that $c(\mathcal B)$ is minimal. Since $\mathcal B_0$ is tame it follows that   $c_1(\mathcal B)\le c_1(\mathcal B_0)=l<b(\mathfrak k)$.  
 
Since  all edge groups in $\mathcal B$ are proper subgroups of $A_{e}=\langle m_e, l_e\rangle$  we conclude that $\mathcal B$  is not folded. As the underlying graph of $\mathcal B$ is a tree we conclude that a fold of type IA or a fold  of type IIA can  be applied to  $\mathcal B$. We consider these  two cases separately.

\smallskip
 
\noindent\textbf{Fold of type IA.} In this case a pair of  distinct edges $f_1$ and $f_2$  starting at  a common vertex $u:=\alpha(f_1)=\alpha(f_2)\in VB$  can be folded. After   auxiliary moves   we can assume that  the fold is elementary, that is, $f_1$ and $f_2$ have  same label: $$(a, e', b):=(o_{f_1} , [f_1] , t_{f_1})=(o_{f_2} , [f_2] , t_{f_2})\in A_{[u]}\times EA \times A_{[\omega(f_1)]}.$$
Put $y_1:=\omega(f_1)$ and $y_2:=\omega(f_2)$. 

Let $\mathcal B'$ be the $\mathbb A$-graph that is obtained from $\mathcal B$  by folding $f_1$ and $f_2$.  By definition, the edge groups $B_{f_1}$ and $B_{f_2}$ are replaced by $$B_{f}'=\langle B_{f_1}, B_{f_2}\rangle\leq A_{e}$$  and the vertex groups  $B_{y_1}$ and $B_{y_2}$ are replaced by  $$B_y'=\langle B_{y_1}, B_{y_2}\rangle \le A_{[y]}, $$ see Figure~\ref{fig:elemfoldIA}. Observe that  $B_f'$ is either trivial or generated by $m_e$ and  that  $B_y' \leq A_{[y_1]}=A_{[y_2]}$ is meridional, that is, generated by conjugates of $x_1$ and $x_2$ if $[y_1]=v_0$ and generated by conjugates of $m_1$ if $[y_1]=v_1$.   
\begin{figure}[h!] 
\begin{center}
\includegraphics[scale=1]{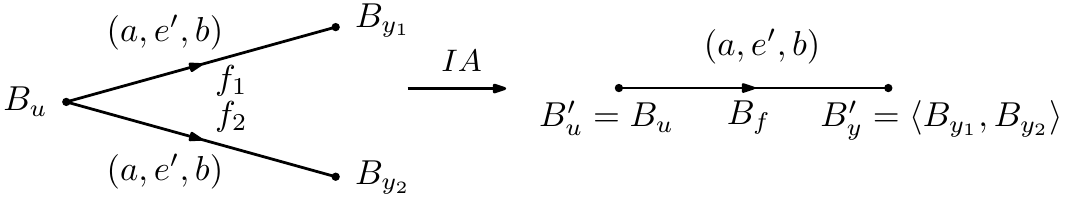}
\caption{An elementary fold of type IA.}{\label{fig:elemfoldIA}}
\end{center}
\end{figure}
We consider two cases depending on the type   $e'$ of $f_1$ and $f_2$.

\smallskip
 
\noindent\emph{Case 1:   $e'=e$}. In this case $u$ is of type $v_0$ and  $y_1  $ and $y_2$   are of type $v_1$. The only tameness condition that is not trivially satisfied is condition (T4.a).  Condition (T4.b) says that there are $w(B_{y_i})$ edges in $$S_i:=Star_B(y_i)\cap E_\mathcal B=\{f \in EB \ | \ B_f\neq 1 \ \text{ and } \ \alpha(f)=y_i\}.$$
Let $f\in S_1\cup S_2$. Since $B_f=\langle m_e\rangle$ it follows that   $t_f^{-1} m_V t_f\in B_{\omega(f)}$. Condition (2) in the definition of good subgroups of $A_{v_0}=\pi_1(E, e_0)$ then implies that $\bar{w}(B_{\omega(f)})\geq 2$. Therefore, if $\omega(f)\neq \omega(f')$ for all $f\in S_1$ and all $f'\in S_2$ (or equivalently, if $f_1^{-1}\cup f_2^{-1}$ is not contained in $S_1\cup S_2$) then  
\begin{eqnarray}
2w(B_y') & \le & 2w(B_{y_1})+2w(B_{y_2})  \nonumber \\
 & \le & 2|S_{1}|+2|S_{2}| \nonumber \\
  & \le & \sum_{f\in S_1\cup S_2} \bar{w}(B_{\omega(f)}) \nonumber\\
  &\le  &  c_1(\mathcal B)\nonumber 
\end{eqnarray}
and so  $2w(B_y')\le c_1(\mathcal B )\le l< b(\mathfrak k)=2b(\mathfrak k_1)$ which implies that $w(B_{y}')<b(\mathfrak k_1)$. If $f_1^{-1} \in S_1$ and $f_2^{-1}\in S_2$ then, since $B_{y_i}\le A_{v_1}$ is tame  and the meridian  $$t_{f_i}^{-1} m_1 t_{f_1} = b^{-1} m_1 b$$ lies in $B_{y_i}$, we can assume that $b^{-1} m_1 b$ is part of a minimal meridional generating set of $B_{y_i}$. Thus $$w(B_y')\le w(B_{y_1})+w(B_{y_2})-1.$$ The same computation as above  with the summation taken over the set $S_1\cup S_2\setminus\{f_1^{-1}\}$ shows that $w(B_y')< b(\mathfrak k_1)$. Therefore $\mathcal B$ is tame.  Since no vertex of type $v_0$ is affected we see that $c_1(\mathcal B')\le c_1(\mathcal B)$ and since the number of edges drops by two we conclude that $c(\mathcal B')<c(\mathcal B)$.

\smallskip
 
\noindent\emph{Case 2. } \emph{$e'=e^{-1}$}. Thus $u$ is of type $v_1$ and  $y_1$ and $y_2$ are of type $v_0$. It follows from Proposition~\ref{prop:01} that    $B_y'=\langle B_{y_1}, B_{y_2}\rangle \le A_{v_0}$ is contained in a good subgroup $B_{y}''\leq A_{v_0}$ such that  $\bar{w}(B_y'') \le \bar{w}(B_w')$. 

Let $\mathcal B''$ be the $\mathbb A$-graph  that is obtained from $\mathcal B'$ by replacing the vertex group  $B_{y}'$ by $B_{y}''\leq A_{v_0}$.  Thus $\mathcal B''$ is tame and 
 $$c_1(\mathcal B'')-c_1(\mathcal B)  = \bar{w}(B_y'') - \bar{w}(B_{y_1})-\bar{w}(B_{y_2}) \leq 0.$$ 
 Since two edges are identified we conclude that  $c(\mathcal B'')<c(\mathcal B)$ which contradicts   our choice of $\mathcal B$.

\medskip
 
\noindent\textbf{Fold of type IIA.} We can assume that no fold of type IA is applicable to $\mathcal B$.  By definition of type IIA folds,  there is  an edge  $f\in EB$  such that $$B_f\neq \alpha_{e'}^{-1}(o_f^{-1} B_x o_f)$$ where $e' :=[f]\in EA$ and $x:=\alpha(f)\in VB$.  Let $y$ denote the vertex $\omega(f)\in VB$. We consider two cases depending on the type $e'$ of $f$.

\smallskip

\noindent\emph{Case 1.} \emph{$e'=e^{-1}$}. Thus $x$ is of type $v_1$ and $B_x$ is a tame subgroup of $A_{v_1}= G(\mathfrak k_1)$.  Assume that
$$B_x= \langle h_1 m_1h_1^{-1} , \ldots  , h_rm_1h_r^{-1}\rangle$$
where $r=w(B_x)$. From  $$\alpha_{e^{-1}}^{-1}(o_f^{-1}B_xo_f)\neq 1$$ it follows that  
 $$  o_f\alpha_{e^{-1}}(A_e)o_f^{-1}\cap B_x \neq 1.$$
The tameness of the  meridional  subgroup $B_x\leq A_{v_1}$ implies  that 
$$  o_f\alpha_{e^{-1}}(A_e)o_f^{-1}\cap B_x= o_f \langle m_1\rangle o_f^{-1}$$
and  $o_fm_1o_f^{-1}$ is in $B_x$ conjugate to $h_im_1h_i^{-1}$ for some $1 \le i\le r$.  Condition (T4.b) says that  there is  $g\in Star_B(x)\cap E_{\mathcal B}$  such that $o_g m_1 o_g^{-1}$ is in $B_u$ conjugate to $ h_im_1h_i^{-1}$. Hence,  
$ o_g m_1 o_g^{-1}$ and $o_f m_1o_f^{-1}$  are  conjugate in $B_x$.   Lemma~\ref{lemma:commeridian} then implies that $o_f= a o_g c$ for some $a\in B_x$ and some $c \in P(\mathfrak k_1)$. This equality implies that a fold of type IA can be applied to $\mathcal B$ since  $P(\mathfrak k_1)=\alpha_{e^{-1}}(A_{e^{-1}})= \omega_e(A_e), $ which contradicts our assumption on $\mathcal B$.

\noindent\emph{Case 2.} \emph{$e'=e$.} Hence $x$ is of type $u_0$ and so  $B_x$ is a good subgroup of $A_{v_0}$. Thus 
$$ o_f \alpha_e(A_e) o_f^{-1} \cap B_u= o_f C_V o_f^{-1} \cap B_u= o_f \langle m_V \rangle o_f^{-1} $$
since $o_f \alpha_e(A_e) o_f^{-1} \cap B_u$ is non-trivial.

Let $\mathcal B'$ be the $\mathbb A$-graph that is obtained from $\mathcal B$  by replacing $B_y$ by 
$$B_y'=\langle B_y,  t_f^{-1} m_1 t_f\rangle$$ 
and $B_f=1$ by $B_f'=\langle m_e\rangle$. The only tameness condition that is not trivially satisfied is condition (T4.a), i.e.~$w(B_y')<b(\mathfrak k_1)$. To see this we apply the same computation as in the case of a fold of type IA based on an  edge of type $e$.  The complexity clearly decreases since no vertex of type $v_0$ is affected, $|EB'|=|EB|$ and $|E_{\mathcal B}|=|E_\mathcal B|+2$.      
\begin{figure}[h!] 
\begin{center}
\includegraphics[scale=1]{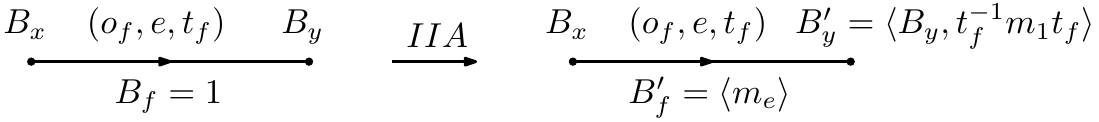}
\caption{An elementary fold of type IIA when  $[f]=e$.}{\label{fig:initial}}
\end{center}
\end{figure}
\end{proof}

  
\section{Algebraically tame knots and braid satellites}
In this section we prove Theorem~\ref{thm02}. We follow the notation from Section~\ref{section:basicdef}. Let $\mathfrak k_1$ be a prime  algebraically tame knot and let $\mathfrak k$ be a braid satellite of $\mathfrak k_1$ with   braid pattern $\beta$.  We want to show that $\mathfrak k$ is   algebraically tame. To this end we must show that any meridional subgroup of $G(\mathfrak k)$  that is generated by less than $b(\mathfrak k)$ meridians is tame where $n$ is the number of strand of $\beta$.  Let $\mathbb A$ be the graph of groups described in Section~\ref{section:basicdef}. Recall that
$$A_{v_0}= F(x_1, \ldots , x_n) \rtimes  \langle t \rangle $$
where the action of $\langle t\rangle\cong \mathbb Z$ in $F(x_1, \ldots x_n)$ is given by
$$tx_it^{-1}=a_i x_{\tau(i)} a_i^{-1} \ \ \text{ for } \ \ 1\le i\le n$$
where $a_1,\ldots, a_n\in F(x_1, \ldots , x_n)$ and $\tau\in S_n$ is the permutation associated to  $\beta$. By choosing the base point appropriately we can assume that $m=[x_1]\in \pi_1(\mathbb A , v_0)$ is the fixed meridian of $\mathfrak k$.

\medskip 

Thus  let $U\leq G(\mathfrak k)$ be a meridional subgroup with $r:=w(U)<b(\mathfrak k)$ (it follows from  \cite{Schu} that $b(\mathfrak k)=n b(\mathfrak k_1)$). In order to show that $U$ is tame  we study   $\mathbb A$-graphs that represent $U$. However, we do not consider arbitrary ones  but only those that are ``nice''  in the following sense. We say that an $\mathbb A$-graph  $\mathcal B$ is \emph{benign} if:
\begin{enumerate}
\item[(B0)] $\mathcal B$ represents $U$, that is,   $U(\mathcal B, u_0)=U$ for some vertex $u_0\in VB$ of type $v_0\in VA$. 

\item[(B1)] the underlying graph of $\mathcal B$ is a finite tree. 
 
\item[(B2)] the edge groups are either trivial or equal to $ \langle m_e\rangle$.    

\item[(B3)] for  each  vertex $u\in VB$ of type $v_0$  the  corresponding vertex group    $B_u$ is meridional (generated by conjugates in $A_{v_0}$ of $x_1$).

\item[(B4)] for each vertex $u\in VB$ of type $v_1$  the following hold:
\begin{enumerate}
\item  $B_u=\langle  g_1m_1g_1^{-1}, \ldots  , g_rm_1g_r^{-1}\rangle$  
such that $  r= w(B_u) < b(\mathfrak k_1)$. 

\item for each $i=1, \ldots , r$ there is an edge $f_i\in   Star_B(u)\cap E_{\mathcal B}$ such that  $g_im_1g_i^{-1}$ is (in $B_u$) conjugate to $ o_{f_i} m_1 o_{f_i}^{-1}$.
\end{enumerate} 
\end{enumerate}

The complexity of a benign $\mathbb A$-graph $\mathcal B$ is defined as  the triple
$$ c(\mathcal B):= (c_1(\mathcal B), |EB| , |EB|-|E_{\mathcal B}|)  $$
where 
$$c_1(\mathcal B)= \sum_{\substack{ u\in VB  \\  [u]=v_0}} rank(B_u).$$

The same construction as in the previous section shows that there is a benign  $\mathbb A$-graph such that $c_1(\mathcal B_0)=r=w(U)$. Now choose a benign $\mathbb A$-graph $\mathcal B$ such that $c(\mathcal B)$ is minimal. An argument  completely analogous  to the one given in the  previous section,  with the obvious adjustments and using Lemma~\ref{C1} instead of Proposition~\ref{prop:01} and the notion of good subgroups, shows that if   $\mathcal B$ is not folded then there is a benign $\mathbb A$-graph that represents $U$ and has smaller complexity.  Therefore $\mathcal B$  must be folded.

We now use the foldedness  of $\mathcal B$ to show that $U$ is tame.  According to Lemma~\ref{C1}, for each $u\in VB$ with $[u]=v_0$, the corresponding vertex group $B_u$ is freely generated by $$g_{u,1}x_1g_{u,1}^{-1}, \ldots , g_{u,r_u} x_1 g_{u,r_u}^{-1}$$ where $r_u:=rank(B_u)$ and $g_{u,i}\in A_{v_0}$.  For each  such $u$ let $\gamma_u$ be a  reduced $\mathbb B$-path  from $u_0$ to $u$.  Then 
$$S:=\{m_{u,i}:=[ \mu( \gamma_u \cdot g_{u,i} x_1 g_{u,i}^{-1}\cdot \gamma_u^{-1})]  \ | \ u\in VB \ \text{  s.t. }   \ [u]=v_0 \ \text{ and } \   1\le i\le r_u\}$$
is a meridional generating set of $U$ of minimal size.  First observe that, as $\mathcal B$ is folded, $m_{u,i}$ is conjugate to $m_{u',i'}$ iff $u=u'$ and $g_{u,i} x_1 g_{u, i}^{-1}$  is in $B_u$  cojugate to $g_{u, i'} x_1g_{u,i'}^{-1}$. Lemma~\ref{C1} therefore implies that $i=i'$ and so the meridians $m_{u,i}$ are pairwise non conjugate. Next we  show that  for any $g\in G(\mathfrak k)$  either $gP(\mathfrak k)g^{-1} \cap U=1$ or $gP(\mathfrak k)g^{-1} \cap U= g\langle m\rangle g^{-1}$ and $gmg^{-1}$ is in $U$ conjugate to  some of the meridians  in $S$.  Assume that   $gpg^{-1}\in U$ for some non-trivial   peripheral element $p\in P(\mathfrak k)$.  The foldedness of $\mathcal B$ implies that there is $u\in VB$ of type $v_0$ such that $B_u$ contains a conjugate (in $A_{v_0}$) of $p$. Lemma~\ref{C1} implies that:
\begin{enumerate}
\item $p\in \langle x_1\rangle$ and   $cx_1c^{-1}\in  B_u$  for some $c\in A_{v_0}$.

\item $cx_1c^{-1}$ is in $B_u$ conjugate to $g_{u,i} x_1 g_{u,i}^{-1}$ for some $1\le i\le r_u$.
\end{enumerate}
Therefore $gP(\mathfrak k)g^{-1} = g\langle m \rangle g^{-1}$ and $gmg^{-1}$ is in $U$ conjugate $m_{u,i}$ for some $u$ and some $i$ as before. This completes the proof  of the theorem.


\bibliographystyle{amsplain}

\end{document}